\documentclass[11pt]{article}
\usepackage[centertags]{amsmath}
  \usepackage{amsfonts}
\usepackage{amssymb}
\usepackage{makeidx}
\usepackage{newlfont}
\usepackage{amsthm} 
\usepackage{stmaryrd}
\usepackage[]{titletoc}  
\usepackage{mathrsfs}
\usepackage{stmaryrd}
 \usepackage{graphicx}  
 \usepackage{xypic}
\usepackage{hyperref}

\hfuzz2pt 
\newlength{\defbaselineskip}
\setlength{\defbaselineskip}{\baselineskip}
\newcommand{\setlinespacing}[1]%
           {\setlength{\baselineskip}{#1 \defbaselineskip}}

\theoremstyle{plain}
\newtheorem{thm}{Theorem}[section]
\newtheorem{cor}[thm]{Corollary}
\newtheorem{lem}[thm]{Lemma}
\newtheorem{prop}[thm]{Proposition}
\newtheorem{exam}[thm]{Example}
\newtheorem{rem}[thm]{Remark}

\makeatletter\@addtoreset{equation}{section} \makeatother
\setcounter{page}{1}
\begin{document}
\title{ Totally Abelian Toeplitz  operators and geometric invariants associated with their symbol curves}
\author{Hui Dan \quad Kunyu Guo \quad Hansong Huang }
\date{}
\maketitle \noindent\textbf{Abstract:} This paper mainly studies
totally Abelian operators
 in the context of analytic Toeplitz operators on both the Hardy and Bergman space.
  When the symbol is a meromorphic function on
 $\mathbb{C}$, we establish the connection between  totally Abelian property of these operators
  and  and geometric properties of their symbol curves.
   It is found that winding numbers and
 multiplicities of   self-intersection of symbol curves   play  an important role in this topic.
 Techniques of  group theory, complex analysis, geometry  and operator theory are intrinsic in this paper.
 As a byproduct,  under a mild condition we   provides an affirmative answer to a
question raised in \cite{BDU,T1}, and also construct some examples
to show that
  the answer is negative if the associated conditions are weakened.

 \vskip 0.1in \noindent \emph{Keywords:} finite Blaschke products; local inverse;  winding
 numbers; multiplicities of   self-intersection; meromorphic  functions.

\vskip 0.1in \noindent\emph{2010 AMS Subject Classification:} 47C15;
  30B40. 

\section{Introduction}
 ~~~~In this paper,   $\mathbb{D}$ denotes the unit disk in the complex plane $\mathbb{C}$,
  and $\mathbb{T}$ denotes the
 boundary of $\mathbb{D}$.
 Let $\mathfrak{M}(\overline{\mathbb{D}})$ consist of all meromorphic functions over $\mathbb{C}$ which
 have
  no pole  on the closed unit
  disk $\overline{\mathbb{D}}$, and
  $\mathfrak{R}(\overline{\mathbb{D}})$ denotes the set of all
  rational functions   without    pole on  $\overline{\mathbb{D}}$.
  It is clear that  $\mathfrak{M}(\overline{\mathbb{D}})\supseteq \mathfrak{R}(\overline{\mathbb{D}})  .$
 Let $H^\infty( \mathbb{D})$
 denote  the Banach algebra of all bounded holomorphic functions over $ \mathbb{D}$, and
 $H^\infty(\overline{ \mathbb{D}})$, as a subset of $H^\infty( \mathbb{D})$, consists of functions that are
 holomorphic on   $\overline{ \mathbb{D}}$.
  The Hardy space $H^{2}(\mathbb{D})$  consists of all holomorphic functions on $\mathbb{D}$
   whose Taylor coefficients at $0$
   are  square summable.   The   Bergman space $L_{a}^2 (\mathbb{D})$
    consists of
   all  holomorphic functions over $\mathbb{D}$ that are square integrable with respect to
  the normalized  area
   measure    over $\mathbb{D}.$
For each function $\phi $ in   $H^\infty(\mathbb{D})$,
  let $T_\phi$ denote  the Toeplitz operator  on the Hardy space $H^{2}(\mathbb{D})$
    or the   Bergman space $L_{a}^2(\mathbb{D})$  according to the context.

Let $H$  be a Hilbert subspace. For an operator $T$ in $ B(H)$,
$\{T\}'$ denotes the commutant of $ T $; that is,
$$\{T\}'=\{S\in B(H): ST=TS\},$$ which is a WOT-closed subalgebra of $B(H).$
The operator $T$ is called \emph{totally Abelian}  if $\{T\}'$ is
Abelian; equivalently, $\{T\}'$  is a maximal Abelian subalgebra of
$B(H)$ \cite{BR}. Berkson and Rubel \cite{BR} completely
characterized  totally Abelian operators in $B(H)$
 for $\dim H<\infty $: in this case  they proved that $T$ is totally Abelian if and only if $T$ has a cyclic  vector.
  In the case of  $\dim H=  \infty$ and  $  H$ being separable, they also
 characterized  when normal operators
  (including unitary operators) and non-unitary isometric operators are totally
  Abelian.
 Related work on analytic Toeplitz operators on $H^2(\mathbb{D})$
are also initiated by Berkson and Rubel \cite{BR}.
   It is shown that if $\phi   $  is an inner
function, then
   $T_\phi$ is totally Abelian on $H^2(\mathbb{D})$ if and only if
 there exist a unimodular constant $c$ and a point $\lambda\in
 \mathbb{D}$  such that
  $\phi(z)=c\frac{\lambda-z}{1-\overline{\lambda}z}$
     \cite[Theorem 2.1]{BR}.
 Recall that  $\{T_z\}'=\{T_h: h\in H^{\infty} (\mathbb{D}) \}$ is maximal Abelian. It follows that
 an analytic Toeplitz operator $T_\phi$ is totally Abelian if and only if
 $\{T_\phi\}'=\{T_z\}'$ (this statement also holds on many function spaces, such as
 weighted Bergman spaces). But in general, it is hard to judge when $\{T_\phi\}'=\{T_z\}'$  holds.
   Thus for a
 generic symbol $\phi\in H^\infty(\mathbb{D}),$ it is beyond touch to give a complete characterization for
totally Abelian property of $T_\phi.$
This leads us to consider the commutants for  analytic  Toeplitz
operators defined on the Hardy space  $H^2(\mathbb{D})$. In
\cite{DW}  Deddens and  Wong  raised several questions on this
topic.
 One of them asks   whether for each function $\phi\in  H^\infty(\mathbb{D})$,  there is
  an inner function $\psi$ such that $\{T_\phi\}'=\{T_\psi\}'$ and that
  $\phi=h\circ \psi$  for some $h\in  H^\infty(\mathbb{D})$.
     Baker, Deddens and  Ullman \cite{BDU} proved that for an entire function $\phi$,
 there is a positive integer $k$ such that $\{T_\phi\}'=\{T_{z^k}\}'$
 and $\varphi=h(z^k)$ for some entire function $h$.

For a function  $\phi$ in  $H^\infty(\mathbb{D})$, if there exists a
point $\lambda $ in $\mathbb{D}$ such that the inner part  of
$\phi-\phi(\lambda)$ is a finite Blaschke product, then $\phi$ is
called to be in Cowen-Thomson's class, denoted by $\phi\in
\mathcal{CT}(\mathbb{D})$. It is known that
$\mathcal{CT}(\mathbb{D})$ contains all nonconstant functions in
$H^\infty(\overline{\mathbb{D}})$. Below, $\mathcal{H}$ denotes the
Hardy space $H^2(\mathbb{D})$  or the  Bergman space $L_{a
}^2(\mathbb{D}) $.  As presented below is the remarkable theorem on
commutants for analytic Toeplitz operators, due to Thomson and Cowen
\cite{T1,T2,Cow1}; also see \cite[Chapter 3]{GH} for a detailed
discussion  and see \cite{DSZ,SZZ,GSZZ} for related work on this
line.
\begin{thm}\label{Tm1}[Cowen-Thomson]
  Suppose  $\phi\in   \mathcal{CT}(\mathbb{D})$. Then there exists
a finite Blaschke product $B$  and an $H^\infty$-function  $\psi$
such that $\phi =\psi(B)$ and $\{T_\phi\}'= \{T_B\}'$ holds on
$\mathcal{H} $.
\end{thm}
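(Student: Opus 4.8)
The plan is to follow the strategy of Thomson and Cowen: single out $B$ as a ``largest finite Blaschke factor'' of $\phi$, dispose of the easy inclusion, and then analyze $\{T_\phi\}'$ through the eigenspaces of $T_\phi^{*}$ and the monodromy of $\phi$. First record what the hypothesis gives: there is $\lambda_0\in\mathbb{D}$ for which the inner part of $\phi-\phi(\lambda_0)$ is a finite Blaschke product of some order $n\ge 1$, so $\phi-\phi(\lambda_0)$ has exactly $n$ zeros in $\mathbb{D}$ counted with multiplicity and no singular inner factor. If $\beta$ is any finite Blaschke product with $\phi=\chi\circ\beta$ for some $\chi$ holomorphic on $\mathbb{D}$, then each of the $\deg\beta$ points of $\beta^{-1}(\beta(\lambda_0))\cap\mathbb{D}$ is a zero of $\phi-\phi(\lambda_0)$, so $\deg\beta\le n$; since $\beta=z$ always occurs, there is one of maximal degree, which I take to be $B$, and I write $\phi=\psi\circ B$. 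Then $B(\mathbb{D})=\mathbb{D}$ forces $\psi\in H^\infty(\mathbb{D})$; maximality of $\deg B$ forces $\psi$ to have no nonconstant finite Blaschke factor; and a short inner-outer computation (factoring the inner part of $(\psi-\psi(\mu_0))\circ B$, where $\mu_0=B(\lambda_0)$) shows $\psi\in\mathcal{CT}(\mathbb{D})$. This yields the factorization $\phi=\psi(B)$. The inclusion $\{T_B\}'\subseteq\{T_\phi\}'$ is then immediate and identical on $\mathcal{H}=H^2(\mathbb{D})$ and on $\mathcal{H}=L_a^2(\mathbb{D})$: since $\psi\in H^\infty(\mathbb{D})$, $T_\phi=T_{\psi\circ B}=\psi(T_B)$ belongs to the WOT-closed unital algebra generated by $T_B$, hence commutes with every member of $\{T_B\}'$.

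Everything reduces to the reverse inclusion $\{T_\phi\}'\subseteq\{T_B\}'$. On $H^2(\mathbb{D})$ there is a clean first reduction: multiplication by the inner function $B$ is a pure isometry whose wandering subspace $H^2\ominus BH^2$ has dimension $d=\deg B$, so $T_B$ is unitarily equivalent to the shift of multiplicity $d$; under this equivalence $\psi(T_B)$ becomes $M_\psi\otimes I_d$, whence $\{T_\phi\}'$ becomes $M_d(\{T_\psi\}')$ and $\{T_B\}'$ becomes $M_d(\{T_z\}')$, and the claim collapses to the \emph{key case}: a symbol $\psi\in\mathcal{CT}(\mathbb{D})$ with no nonconstant Blaschke factor is totally Abelian, i.e. $\{T_\psi\}'=\{T_z\}'$. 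On $L_a^2(\mathbb{D})$ the space $L_a^2\ominus BL_a^2$ is infinite-dimensional, so the shift model is unavailable; there I would carry out the eigenspace/monodromy analysis below directly for $\phi$, concluding $\{T_\phi\}'\subseteq\{T_B\}'$, as in \cite{GH}.

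The heart is the eigenspace/monodromy analysis, which I describe for the key case (Hardy) and which is the same engine used for $\phi$ on the Bergman space. For a regular value $w$ of $\psi$ with all preimages simple and in $\mathbb{D}$, one has $\ker(T_\psi^{*}-\bar w)=\mathrm{span}\{k_{\lambda_1(w)},\dots,k_{\lambda_m(w)}\}$, the span of the reproducing kernels at the preimages $\lambda_j(w)$, of dimension the local valence $m$. Any $S\in\{T_\psi\}'$ has $S^{*}$ preserving each such eigenspace; its matrix $A(w)$ in the locally holomorphically varying basis $\{k_{\lambda_j(w)}\}$ is locally holomorphic, the analytic continuation of $A$ around a loop encircling a critical value of $\psi$ is conjugated by the permutation matrix of the corresponding monodromy element, and boundedness of $S$ (via the kernel estimate $\|S^{*}k_\lambda\|\le\|S\|(1-|\lambda|^2)^{-1/2}$) controls the growth of $A$ near the critical values and near $\partial\mathbb{D}$. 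The crucial point is that when $\psi$ has no nonconstant finite Blaschke factor its monodromy group is ``large'' — it stabilizes no nontrivial partition of the generic fibre that arises as the fibre-partition of a finite Blaschke product — and then equivariance together with the growth bounds forces $A(w)$ to be diagonal, with entries $\overline{h(\lambda_j(w))}$ for a single $h\in H^\infty(\mathbb{D})$; that is, $S=T_h$. Hence $\{T_\psi\}'\subseteq\{T_z\}'$ (the reverse being the classical fact $T_hT_\psi=T_\psi T_h$), which through the Hardy reduction gives $\{T_\phi\}'=\{T_B\}'$; on the Bergman space the same analysis applied to $\phi$ shows that the $\phi$-monodromy pins an $S\in\{T_\phi\}'$ down to exactly the constraints defining $\{T_B\}'$.

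The principal obstacles are two. First, the finiteness input: one must bound $\deg B$ and, more essentially, see that the local inverses of $\phi$ entering the analysis are finite in number and well-controlled under analytic continuation — this is exactly where the hypothesis $\phi\in\mathcal{CT}(\mathbb{D})$ is genuinely used and where the statement fails for a general $\phi\in H^\infty(\mathbb{D})$. Second, the implication ``monodromy-equivariant $+$ bounded $\Rightarrow$ diagonal'' at the core of the key case, which is not a formal consequence of $\phi$ being a function of $B$ but a genuine interplay between the complex geometry of the monodromy action and operator-theoretic growth estimates. On the Bergman space there is the further difficulty that, lacking the shift model, one cannot first pass to the scalar key case and must instead run the eigenspace analysis for $\phi$ throughout, which is the more delicate setting treated in \cite{GH}.
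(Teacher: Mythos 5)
This theorem is not proved in the paper; it is quoted verbatim from Thomson and Cowen (with the detailed exposition delegated to \cite[Chapter~3]{GH}), so there is no ``paper's proof'' to compare against. I can only assess your sketch on its own merits.

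Your preprocessing is sound. Choosing $B$ of maximal order among finite Blaschke products through which $\phi$ factors, and bounding that order by the number $n$ of zeros of $\phi-\phi(\lambda_0)$ in $\mathbb{D}$, is correct; the inner--outer computation showing $\psi\in\mathcal{CT}(\mathbb{D})$ works because $O\circ B$ is outer whenever $O$ is outer and $B$ is a finite Blaschke product (via the measure-preserving property of inner functions fixing $0$, after a M\"obius normalization), so the inner factor of $(\psi-\psi(\mu_0))\circ B$ is $I\circ B$, and $I\circ B$ a finite Blaschke product forces $I$ to be one. The shift-model reduction on $H^2$ is clean: under the unitary taking $T_B$ to $M_z\otimes I_d$ you indeed get $\{T_\phi\}'\cong M_d(\{T_\psi\}')$ and $\{T_B\}'\cong M_d(\{T_z\}')$, so the theorem collapses to showing a $\mathcal{CT}$-symbol with no nonconstant finite Blaschke divisor in the composition sense is totally Abelian. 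That is a genuine and slightly nonstandard reduction (Thomson and Cowen build $B$ directly from analytic continuation of local inverses rather than by a maximality argument plus tensor-splitting), and it is a legitimate reorganization.

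The gap is the key case itself, which is the entire substance of the theorem. Your description of the mechanism---$S^*$ preserving the eigenspaces $\ker(T_\psi^*-\bar w)$ spanned by reproducing kernels at the fibre of $w$, the locally holomorphic matrix $A(w)$, its monodromy-equivariance, the growth control from $\|S^*k_\lambda\|\le\|S\|\|k_\lambda\|$---names the right ingredients, but the central implication, ``no nontrivial Blaschke factor $\Rightarrow$ the monodromy forces $A(w)$ diagonal with entries $\overline{h(\lambda_j(w))}$ for one $h\in H^\infty$,'' is stated, not argued. That step requires (i) showing the set of local inverses admitting analytic continuation across a suitable annulus organizes into a finite group, (ii) identifying the resulting orbit structure with a Blaschke factorization (which is precisely what is excluded in the key case), and (iii) converting the resulting rigidity into the Toeplitz form of $S$ via a boundedness/removable-singularity argument---none of which is routine, and all of which is the real content of Thomson's and Cowen's papers. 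You acknowledge this explicitly (``not a formal consequence \dots but a genuine interplay''), which is honest, but it means the proposal is a plan for a proof, not a proof. On the Bergman space the proposal simply defers to \cite{GH}, where again the eigenspace analysis (now with infinite-dimensional $L_a^2\ominus BL_a^2$ and more delicate kernel estimates) is the whole point. So: correct framing and a valid reduction, but the theorem's core argument is absent.
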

\noindent   The identity $\phi =\psi(B)$ in Theorem \ref{Tm1} is
called a \emph{Cowen-Thomson representation} of $\phi$.  Note that this $B$ is
of maximal order in the following sense:
 if there is  another finite Blaschke product $\widetilde{B}$ and
 a function $\widetilde{\psi}$ in  $H^\infty(\mathbb{D})$ satisfying
  $\phi =\widetilde{\psi}(\widetilde{B}),$ then
$$\mathrm{order} \, B \geq  \mathrm{order}\, \widetilde{B}.$$
  One defines a quantity  $b(\phi)$  to be
     the maximum of   orders of  $\widetilde{B}$, for which
there is a function $\widetilde{\psi}$ in  $H^\infty(\mathbb{D})$
such that
    $\phi =\widetilde{\psi}(\widetilde{B}) $, and  $b(\phi)$ is called \emph{ Cowen-Thomson order} of $\phi$.
Thus for the finite Blaschke product $B$ in Theorem \ref{Tm1} we
have order $B =b(\phi)$. Once $\phi$ is fixed,   it is not difficult
to show that $B$ is uniquely determined in the following sense.  If
there is another finite Blaschke product $B_0$ satisfying one of the
following:
\begin{itemize}
\item[(1)] order $B_0$=$b(\phi)$ and there is an $h\in H^\infty$ such that $\phi=h( B_0);$
\item[(2)]   $\{T_\phi\}'=\{T_{B_0}\}'$,
 \end{itemize}
 then there is a Moebius map $\eta$ such that $B_0=\eta( B)$.
 This means that \emph{Cowen-Thomson representation} of $\phi$ is unique in the sense of  modulo Moebius maps.

For convenience,  we now  omit the space $\mathcal{H}.$
  The following is an  immediate consequence of Theorem \ref{Tm1}, see \cite{BDU,T1}.
\begin{cor}\label{cor}
Let $\phi$ be a nonconstant  function in $H^\infty(\overline{\mathbb{D}})$. Then there exist  a finite
Blaschke product $B$ and a function $\psi$ in
$H^\infty(\overline{\mathbb{D}})$ such that $\phi =\psi(B)$ and
$\{T_\phi\}'= \{T_B\}'$ holds. If   $\phi$ is entire, then
$\psi$ is entire and $B(z)=z^n$ for some positive integer $n$.
\end{cor}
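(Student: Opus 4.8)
The plan is to read the first assertion off Theorem~\ref{Tm1} and then do the small amount of extra work needed to place $\psi$ in $H^\infty(\overline{\mathbb{D}})$ rather than merely in $H^\infty(\mathbb{D})$. Since every nonconstant function of $H^\infty(\overline{\mathbb{D}})$ lies in $\mathcal{CT}(\mathbb{D})$, as noted above, Theorem~\ref{Tm1} produces a finite Blaschke product $B$ of order $b(\phi)$ and a function $\psi\in H^\infty(\mathbb{D})$ with $\phi=\psi(B)$ and $\{T_\phi\}'=\{T_B\}'$. So everything reduces to showing that this $\psi$ continues holomorphically across $\mathbb{T}$.

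For that, fix $w_0\in\mathbb{T}$ and pick $z_0\in\mathbb{T}$ with $B(z_0)=w_0$, which is possible because a finite Blaschke product carries $\mathbb{T}$ onto $\mathbb{T}$. Writing $a_1,\dots,a_n$ for the zeros of $B$, one has $|B'(e^{i\theta})|=\sum_{j=1}^{n}\frac{1-|a_j|^2}{|e^{i\theta}-a_j|^2}>0$, so $B$ is injective near $z_0$ and admits a holomorphic local inverse $\beta$ on a disk $V\ni w_0$, which maps $V$ biholomorphically onto a neighborhood of $z_0$; since $B$ carries $\mathbb{D}$, $\mathbb{T}$ and $\mathbb{C}\setminus\overline{\mathbb{D}}$ onto themselves, $\beta(V\cap\mathbb{D})\subseteq\mathbb{D}$. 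On $V\cap\mathbb{D}$ the identity $\phi=\psi(B)$ reads $\psi=\phi\circ\beta$, and since $\phi$ is holomorphic on a neighborhood of $\overline{\mathbb{D}}$, shrinking $V$ makes $\phi\circ\beta$ holomorphic on all of $V$. As $\phi\circ\beta$ agrees with $\psi$ on the nonempty open set $V\cap\mathbb{D}$, the identity theorem extends $\psi$ holomorphically across $w_0$. Covering the compact circle $\mathbb{T}$ by finitely many such disks yields a holomorphic extension of $\psi$ to a neighborhood of $\overline{\mathbb{D}}$; hence $\psi\in H^\infty(\overline{\mathbb{D}})$, and $\phi=\psi(B)$ persists by continuity. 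This part uses nothing special about $H^2(\mathbb{D})$; it works on any space for which Theorem~\ref{Tm1} holds.

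Now assume $\phi$ is entire. The shortest route is to invoke the theorem of Baker, Deddens and Ullman recalled above \cite{BDU}: there is a positive integer $k$ with $\{T_\phi\}'=\{T_{z^k}\}'$ and $\phi=h(z^k)$ for an entire function $h$. An entire function is bounded on $\overline{\mathbb{D}}$, so $h\in H^\infty(\overline{\mathbb{D}})$; moreover $z^k$ is a finite Blaschke product, and by the uniqueness of the Cowen--Thomson representation modulo Moebius maps (item~(2) above, applied to $B_0=z^k$) we have $z^k=\eta(B)$ for a Moebius map $\eta$, so $z^k$ is itself a Cowen--Thomson datum for $\phi$. Replacing the pair $(B,\psi)$ by $(z^n,h)$ with $n:=k$ then finishes the proof. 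One can also argue directly from Theorem~\ref{Tm1}. After normalizing $B(0)=0$ (absorbing a disk automorphism into $\psi$), the relation $\phi=\psi(B)$ on $\mathbb{D}$ says $\phi$ is constant on the fibers of $B$ over $\mathbb{D}$. Because $B^{-1}(\mathbb{D})=\mathbb{D}$, this identity propagates by analytic continuation to the statement that $\phi(z)=\phi(z')$ whenever $B(z)=B(z')$, for all $z,z'\in\mathbb{C}$; consequently all branches of $\phi\circ B^{-1}$ are the same function, $\psi$ extends single-valuedly to $\mathbb{C}$ off the finitely many critical values of $B$, and there it is locally bounded, so those singularities are removable and $\psi$ is entire. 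Finally, if $B$ had a pole $p\in\mathbb{C}$, then near $p$ one would have $\psi(B(z))=\phi(z)$ with $\phi$ bounded and $B(z)\to\infty$, forcing $\psi$ to be bounded near $\infty$ and hence constant by Liouville, which is impossible; so all zeros of $B$ lie at $0$, i.e. $B(z)=cz^n$ with $|c|=1$, and absorbing the rotation into $\psi$ we may take $B(z)=z^n$.

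The first half presents no real difficulty once the non-vanishing of $B'$ on $\mathbb{T}$ is used. The substance of the corollary is the entire case; if one wants to avoid quoting Baker--Deddens--Ullman, the main obstacle is exactly the step upgrading ``$\phi$ constant on the fibers of $B$ over $\mathbb{D}$'' to ``$\phi$ constant on all fibers of $B$'', together with the check that the resulting single-valued continuation of $\psi$ is genuinely entire; after that the Liouville step pins down $B(z)=z^n$.
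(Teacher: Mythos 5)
The paper gives no proof of Corollary~\ref{cor}: it simply declares it ``an immediate consequence of Theorem~\ref{Tm1}'' and points to \cite{BDU,T1}. Your proposal is therefore doing genuine work that the paper leaves implicit, and both halves of it are correct. For the first half, the key facts you use --- that $|B'|$ is bounded below on $\mathbb{T}$ (via $|B'(e^{i\theta})|=\sum_j\frac{1-|a_j|^2}{|e^{i\theta}-a_j|^2}$), that $B$ preserves $\mathbb{D}$, $\mathbb{T}$ and the exterior, and that $\phi$ is holomorphic on a neighborhood of $\overline{\mathbb{D}}$ --- do combine cleanly with the identity theorem to extend $\psi=\phi\circ\beta$ across each $w_0\in\mathbb{T}$, and compactness finishes the argument. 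For the entire case, your first route (quote \cite{BDU} and use the uniqueness-modulo-Moebius of the Cowen--Thomson representation already recorded in the paper) is exactly what the paper's citation intends. Your second, self-contained route is worth comparing to the paper's own later machinery: the step ``the local inverses of $B$ continue off a finite set in $\mathbb{C}$, so $B(z)\mapsto\phi(z)$ is a well-defined holomorphic function off a finite set, which is then removable by local boundedness'' is precisely the technique the paper uses in Lemma~\ref{rab} and Theorem~\ref{mero} for rational and meromorphic symbols; your Liouville argument forcing $B$ to be pole-free in $\mathbb{C}$ (hence $B(z)=cz^n$) is a clean replacement for the BDU citation. One small point of rigor you could tighten: the claim that $\phi(z)=\phi(z')$ whenever $B(z)=B(z')$ for all $z,z'\in\mathbb{C}$ needs the observation that the analytic continuations of the local inverses $\rho_j$ remain $\mathbb{C}$-valued (they accumulate only at poles of $B$, which are finite points), so that $\phi\circ\rho_j$ continues to make sense globally; once that is said, the monodromy and removability steps go through as you describe.
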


 Suppose $\phi $ belongs to Cowen-Thomson's class  $
\mathcal{CT}(\mathbb{D})$. Then $T_\phi$ is totally Abelian if and
only if $b(\phi)=1.$  When $\phi$ is an entire function, expanding
$\phi$'s Taylor series yields
 $$\phi(z)=\sum_{n=0}^\infty a_n z^n.$$
 Set $N=\gcd \{n:a_n\neq 0\}$, and then by Corollary \ref{cor} $\{T_\phi\}'=\{T_{z^N}\}'$. In this case,
$ b(\phi)= N. $
 Therefore, for a nonconstant entire function $\phi$, $T_\phi$  is totally Abelian if  and only if
 $$\gcd \,\{n:a_n\neq 0\}=1.$$

Therefore, for totally Abelian property of  analytic Toeplitz
operators $T_\phi$  it is important    to  determine Cowen-Thomson
order $b(\phi)$ of $\phi $,  and  it is of interest
  to determine  the exact form of the  Blaschke product   with order  $b(\phi)$.
  As we will see, there are several ways to study
 $b(\phi)$. The first attack is made by Baker, Deddens
and Ullman \cite{BDU} in the case of $\phi$ being an entire
function.
  In what follows, for
$c\not\in \phi(\mathbb{T})$, let wind $(\phi,c) $ denote    the winding
number of the curve $\phi(z)\,(z\in \mathbb{T})$ around the point
$c$.
    Write $n(\phi)$ for the number
$$ \min\, \{\mathrm{wind} (\phi, \phi(a)): a\in \mathbb{D}, \phi(a)\not\in \phi(\mathbb{T}) \}.$$
 For a function $\phi \in  H^\infty(\overline{\mathbb{D}})$, it is obvious that   $b(\phi)\leq n(\phi).$
If   $b(\phi)= n(\phi) $,  $\phi$ is
 called to \emph{satisfy Minimal Winding Number Property }(MWN Property).
It is shown  that  a nonconstant entire function $\phi$ enjoys MWN
Property \cite{BDU}. For functions in $  H^\infty(\overline{\mathbb{D}})$,  the problem  raised in \cite{BDU} and
\cite{T1} can be reformulated as: \vskip2mm \emph{if $\phi$ is a
nonconstant function in $ H^{\infty} (\overline{\mathbb{D}}),$ then
does $\phi$ have MWN Property; that is, $b(\phi)= n(\phi)$?} \vskip2mm

  If the answer is yes, for a large class of analytic Toeplitz operators  we can formulate their  totally Abelian
property   in terms of winding number.

 For those functions $\phi$ of MWN
Property in $ H^{\infty} (\overline{\mathbb{D}}) $ , we can  determine
  the exact form of $B $ appearing in Corollary \ref{cor}.
  To be precise, let $a$ be a point in $\mathbb{D}$ such that
  $\phi-\phi(a)$ does not vanish on $\mathbb{T}$ and
$$\mathrm{wind }\, (\phi, \phi(a))=n (\phi) = b(\phi).$$
Denote  the inner factor of $\phi-\phi(a)$ by $B_a$, and we  will
show that $B_a$ is the desired finite Blaschke product. For this,
let
$$\phi=\psi( B)$$ be the  Cowen-Thomson representation of $\phi$. By Corollary \ref{cor},
 $\psi$ is in $ H^{\infty} (\overline{\mathbb{D}}).$
 Let $$  \psi-\phi(a)=\eta F$$ be the   inner-outer factorization of $\psi-\phi(a)$, where $\eta$ is inner. We see that
  $$\phi-\phi(a)=(\psi-\phi(a))\circ B=\eta\circ B\,F\circ B.$$
 Therefore $B_a=c\, \eta\circ B$, where $c$ is a constant with $|c|=1.$  Since
 $$\mathrm{order}\, B_a=\mathrm{wind }\, (\phi, \phi(a))=b(\phi)=\mathrm{order}\, B,$$
 this forces that  $\eta$ to be a  Blaschke factor  of order $1.$  Therefore this  $B_a$ is
    the desired finite Blaschke products in Corollary \ref{cor}.
 In this way,     finding $B $  essentially
  reduces to finding one of these points $a$ (in general, such points $a$ consist of a nonempty open set).
 In some  cases of interest  this procedure is feasible (see Theorem
 \ref{ratgroup}).

 MWN Property is quite restricted.
 We will provide some examples of functions with good smoothness on the unit circle, and
  they are in Cowen-Thomson's class $\mathcal{CT}(\mathbb{D})$ but do not
 have MWN Property, see Examples \ref{36} and \ref{exam3}.
   It is known that entire functions have MWN Property \cite{BDU}.
 In Theorem \ref{ratgroup} we  extend this result to all nonconstant meromorphic functions in $\mathfrak{M}(\overline{\mathbb{D}})$.

Before continuing, we introduce the finite self-intersection
property (FSI property). To be precise, for a function  $\phi$ in
the disk algebra  $ A(\mathbb{D})$ and $\eta\in \mathbb{T}$, let
$N(\phi-\phi(\eta),\mathbb{T})$ denote  the  cardinality of the
set $$\{w\in \mathbb{T}:\,\phi(w)-\phi(\eta)=0\}.$$
   called \emph{the multiplicity of
 self-intersection of the curve} $\phi(z)\,(z\in \mathbb{T})$ at the point
 $\phi(\eta).$
Write
$$N(\phi)=\min\,\{N(\phi-\phi(\eta),\mathbb{T}):\eta\in \mathbb{T}\},$$
 called \emph{ the multiplicity of
self-intersection of the curve} $\phi(z)\,(z\in \mathbb{T})$. It is
not difficult to verify that $b(\phi)\leq N(\phi)$.  A function $\phi$ in $
A(\mathbb{D})$ is called to \emph{have FSI property } if  except for
a finite subset of $\mathbb{T}$ each
 point $\xi\in\mathbb{T}$ satisfies $N(\phi-\phi(\xi),\mathbb{T})=1$  \cite{Qu}.

For meromorphic functions in  $\mathfrak{M}(\overline{\mathbb{D}})$,
we have the following result.
\begin{thm} Suppose $\phi$ is a nonconstant function in $\mathfrak{M}(\overline{\mathbb{D}})$. The following are equivalent:
  \label{rational}
\begin{itemize}
\item[(1)] the Toeplitz operator $T_\phi$ is  totally Abelian;
\item[(2)]   $\phi$ has FSI property.
\item[(3)]   $N(\phi)=1.$
 \end{itemize}
\end{thm}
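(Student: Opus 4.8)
The plan is to reduce the whole theorem to statements about the Cowen--Thomson order $b(\phi)$. Since a nonconstant $\phi\in\mathfrak{M}(\overline{\mathbb{D}})$ has no pole on $\overline{\mathbb{D}}$, it is holomorphic on a neighborhood of $\overline{\mathbb{D}}$; hence $\phi\in H^\infty(\overline{\mathbb{D}})\subseteq\mathcal{CT}(\mathbb{D})$, so $T_\phi$ is totally Abelian precisely when $b(\phi)=1$, one always has $1\le b(\phi)\le N(\phi)$, and by Theorem \ref{ratgroup} $b(\phi)=n(\phi)$. Granting these, $(3)\Rightarrow(1)$ is immediate, since $N(\phi)=1$ forces $b(\phi)=1$; and $(2)\Rightarrow(3)$ is immediate too, since the FSI property yields some $\eta\in\mathbb{T}$ with $N(\phi-\phi(\eta),\mathbb{T})=1$ while $N(\phi-\phi(\xi),\mathbb{T})\ge1$ for every $\xi$ (each $\xi$ being a zero of $\phi-\phi(\xi)$ on $\mathbb{T}$), so $N(\phi)=1$. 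Thus the theorem reduces to the single implication $(1)\Rightarrow(2)$, which I will prove in contrapositive form: \emph{if $\phi$ does not have the FSI property, then $b(\phi)\ge2$.}

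For this, suppose $E:=\{\xi\in\mathbb{T}:\,N(\phi-\phi(\xi),\mathbb{T})\ge2\}$ is infinite. Because $\phi$ is real--analytic on $\mathbb{T}$, the set $\widehat{Z}:=\{(z,w)\in\mathbb{T}\times\mathbb{T}:\,\phi(z)=\phi(w)\}$ is a compact semianalytic subset of the $2$--torus, hence has only finitely many connected components, one of which contains the diagonal $\Delta$. Since $E$ is the image under the first projection of $\widehat{Z}\setminus\Delta$, and $\widehat{Z}$ has finitely many components, $E$ can be infinite only if some component $K$ of $\widehat{Z}$ is not contained in $\Delta$ and has positive dimension; moreover $\dim K=1$, as $\dim K=2$ would force $\phi$ to be constant. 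Over a subarc $I\subseteq\mathbb{T}$ — obtained after discarding the finitely many zeros of $\phi'$ and the critical points of the first projection restricted to $K$ — the component $K$ is the graph of a holomorphic function $\rho$, with $\rho(z)\ne z$ and $|\rho(z)|=1$ for $z\in I$, and $\phi\circ\rho=\phi$. By the reflection principle $\rho$ continues holomorphically across $\mathbb{T}$, and continued along $\mathbb{T}$ through the finitely many branch points it produces a nontrivial \emph{local inverse} of $\phi$, that is, a holomorphic germ $\rho\not\equiv\mathrm{id}$ with $\phi\circ\rho=\phi$.

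Finally, one feeds this nontrivial local inverse into the local--inverse/group description underlying the Cowen--Thomson theorem — precisely the machinery developed to prove Theorem \ref{ratgroup} — to conclude that the group generated by the local inverses of $\phi$ is nontrivial; this forces the finite Blaschke product $B$ in Theorem \ref{Tm1} to have order at least two, i.e.\ $b(\phi)\ge2$, contradicting $(1)$. This closes the cycle $(1)\Rightarrow(2)\Rightarrow(3)\Rightarrow(1)$. The main obstacle is exactly this last step: one must show that a partner map which a priori lives only on an arc adjacent to $\mathbb{T}$ — and need not even carry $\mathbb{D}$ into $\mathbb{D}$ — globalizes into a genuine member of the group that controls $\{T_\phi\}'$, and therefore strictly decreases $b(\phi)$. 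A secondary point, settled above by the semianalyticity of $\widehat{Z}$, is to exclude the possibility that $E$ is infinite but ``scattered'', so that a one--dimensional self--intersection component (equivalently, a nontrivial partner map) really does appear; one must also keep the reflected continuation of $\rho$ compatible with the unit disk.
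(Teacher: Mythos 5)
Your equivalences $(2)\Rightarrow(3)$ and $(3)\Rightarrow(1)$ are correct and coincide with the paper's argument. The gap is entirely in $(1)\Rightarrow(2)$, and you have in fact put your finger on it yourself: the final ``globalization'' is not a single feed-into-machinery step, it \emph{is} the machinery. Having a nontrivial holomorphic germ $\rho\not\equiv\mathrm{id}$ with $\phi\circ\rho=\phi$ defined only on a subarc $I$ of $\mathbb{T}$ does not, by itself, force the Cowen--Thomson Blaschke product $B$ to have order $\ge 2$. The paper's Proposition~\ref{rat3} is precisely the assertion that such arc-supported local inverses can be extended (using $\rho=\rho^*$ and the analytic-continuation Lemmas~\ref{ac1}, \ref{value}, \ref{r15}) to bounded local inverses defined off a small set, that their product extends to a genuine finite Blaschke product $B$, and that $\phi$ factors as $h\circ B$. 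This is the technical heart of the section, it depends crucially on $\phi$ being meromorphic on all of $\mathbb{C}$, and your proposal simply cites it by name without reconstructing it. Example~\ref{exam3} in the paper shows that for a general smooth $H^\infty(\overline{\mathbb{D}})$ function the implication ``arc-supported nontrivial local inverse $\Rightarrow b(\phi)\ge 2$'' genuinely fails, so the meromorphic hypothesis must enter in a substantive way that your sketch does not supply.

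Two further remarks. First, the route you propose is not actually shorter than the paper's: the paper finishes $(1)\Rightarrow(2)$ via Corollary~\ref{minimal} (that off a countable set $N(\phi-\phi(\xi),\mathbb{T})=b(\phi)$) together with the cardinality dichotomy of Lemma~\ref{ncount}; Corollary~\ref{minimal} already subsumes exactly the globalization you need, and your uncountably many $\xi\in I$ with $N(\phi-\phi(\xi),\mathbb{T})\ge 2$ would contradict $b(\phi)=1$ immediately. But Corollary~\ref{minimal} is an intermediate product of the proof of Theorem~\ref{ratgroup}, not of its statement, so if you want to use it you must say so. Second, your semianalyticity argument for the dichotomy (either the self-intersection set of the symbol curve is finite or it contains a real-analytic arc) is a legitimate and rather tidy replacement for the paper's Lemma~\ref{ncount}; both apply under real-analyticity of $\phi$ on $\mathbb{T}$. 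You should, however, observe explicitly that a one-dimensional component $K$ meets the diagonal $\Delta$ in at most finitely many points --- by real-analyticity of $K$, since $K\not\subseteq\Delta$ --- before asserting $\rho\ne\mathrm{id}$ on the chosen subarc.
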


 For  a nonconstant function $\phi$ in $H^{\infty} (\overline{\mathbb{D}})$, let
 $\phi=\psi (B)$ be a Cowen-Thomson representation of $\phi$. If
      $\psi\in H^{\infty} (\overline{\mathbb{D}}) $ has FSI
property,  then $\phi$ is called to\emph{ have FSI-decomposable property}.
 Quine \cite{Qu} showed that
 each nonconstant polynomial  has FSI-decomposable property.
  In this paper  we prove that each nonconstant function in $\mathfrak{M}(\overline{\mathbb{D}})$ also
enjoys the same property (see Theorem \ref{FSI2}).

For the characterization of  geometric property of symbol curves, we
introduce the semigroup $G (\phi)$. Precisely, for each continuous
function $\phi$ on $  \mathbb{T} $  define $G (\phi)$ to be the set
of all continuous maps $\rho$ from $ \mathbb{T}$ to  $ \mathbb{T}$
satisfying $\phi(\rho)=\phi$.

For a finite Blaschke product $\phi$, $G(\phi) $  is a finite cyclic
group, and furthermore
 $\sharp (G(\phi))=$ order $\phi $ \cite{CC}.
For $\phi\in  H^{\infty} (\overline{\mathbb{D}})$, we have the following result.
 \begin{thm} Suppose $\phi$ is a nonconstant function in      \label{group}
$H^{\infty} (\overline{\mathbb{D}})$. Then  $G(\phi) $ is a finite
cyclic group.
\end{thm}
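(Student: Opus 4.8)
The plan is to prove directly that each element of $G(\phi)$ is the boundary restriction of a biholomorphic self-map of a fixed annulus around $\mathbb{T}$, deduce from this that $G(\phi)$ is finite, and then conclude with the classical fact that a finite group of orientation-preserving homeomorphisms of the circle is cyclic. The Cowen--Thomson representation $\phi=\psi(B)$ of Corollary~\ref{cor} can be used to keep track of the order of $G(\phi)$, but is not needed for the statement itself.

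First I would fix a neighborhood $U\supseteq\overline{\mathbb{D}}$ on which $\phi$ is holomorphic and set $Z=\{z\in U\cap\mathbb{T}:\phi'(z)=0\}$, a finite set. Given $\rho\in G(\phi)$ and $z_{0}\in\mathbb{T}\setminus Z$, one first observes that $\rho(z_{0})\notin Z$: if $\phi$ were $m'$-to-one with $m'\ge 2$ near $\rho(z_{0})$ while biconformal near $z_{0}$, then solving $\phi\circ\rho=\phi$ would force $\rho(z)-\rho(z_{0})\sim C(z-z_{0})^{1/m'}$ along $\mathbb{T}$, which cannot be single-valued and continuous. Hence near each such $z_{0}$ we have $\rho=(\phi|_{V})^{-1}\circ\phi$ for a suitable neighborhood $V$ of $\rho(z_{0})$, which is holomorphic; patching gives a holomorphic extension of $\rho$ over a neighborhood of $\mathbb{T}\setminus Z$. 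Across each $\zeta\in Z$, writing $\phi$ in local branch coordinates as $c\,\xi^{m}$ near $\zeta$ and as $c'\,\omega^{m'}$ near $\rho(\zeta)$ (note $\phi(\rho(\zeta))=\phi(\zeta)$), the relation $\phi\circ\rho=\phi$ becomes $\omega\circ\rho=(\mathrm{const})\,\xi^{m/m'}$ with $m/m'$ a positive integer, so the extension passes holomorphically through $\zeta$. Carrying this out uniformly, one gets a single annulus $A\subset U$ around $\mathbb{T}$, depending only on $\phi$, and for every $\rho\in G(\phi)$ a holomorphic map $\widehat{\rho}\colon A\to A$ with $\widehat{\rho}|_{\mathbb{T}}=\rho$, $\phi\circ\widehat{\rho}=\phi$ on $A$, and $|\widehat{\rho}|=1$ on $\mathbb{T}$. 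I expect this analytic-extension step, in particular the behaviour at $Z$, to be the main obstacle; it is really a statement about single-valued branches of the local inverses of $\phi$ near $\mathbb{T}$, in the spirit of the local-inverse analysis used elsewhere in the paper.

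Finiteness of $G(\phi)$ then follows by an identity-theorem argument: if $\rho_{1},\rho_{2},\dots\in G(\phi)$ were distinct, choose $z_{0}\in\mathbb{T}\setminus Z$ with $\phi^{-1}(\phi(z_{0}))\cap\mathbb{T}$ finite and disjoint from $Z$ (all but finitely many $z_{0}$ work); infinitely many $\rho_{j}(z_{0})$ coincide with some $w$, and for those $j$, near $z_{0}$, $\rho_{j}$ equals the fixed germ $(\phi|_{V_{w}})^{-1}\circ\phi$, so by connectedness of $A$ all those $\widehat{\rho}_{j}$ coincide on $A$, a contradiction. Next I would show $\deg(\rho)=1$ for $\rho\in G(\phi)$: by the argument principle $\phi(\mathbb{D})\setminus\phi(\mathbb{T})$ is a nonempty open set, for $c$ there $\mathrm{wind}(\phi,c)\ge 1$, and multiplicativity of winding numbers gives $\mathrm{wind}(\phi,c)=\mathrm{wind}(\phi\circ\rho,c)=\deg(\rho)\,\mathrm{wind}(\phi,c)$. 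Consequently $\widehat{\rho}(\mathbb{T})=\mathbb{T}$ and each iterate $\widehat{\rho}^{\,(n)}$ restricts to an element of $G(\phi)$; by finiteness $\widehat{\rho}^{\,(n)}=\widehat{\rho}^{\,(m)}$ on $A$ for some $n<m$, hence $\widehat{\rho}^{\,(m-n)}=\mathrm{id}$ on the open set $\widehat{\rho}^{\,(n)}(A)\supseteq\mathbb{T}$, hence on $A$. Thus $\widehat{\rho}$ is invertible near $\mathbb{T}$, so $\rho$ is an orientation-preserving homeomorphism of $\mathbb{T}$.

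Finally, $G(\phi)$ is a finite sub-semigroup of the group of orientation-preserving homeomorphisms of $\mathbb{T}$, hence a subgroup. Averaging arclength measure over $G(\phi)$ yields an invariant, non-atomic, full-support probability measure on $\mathbb{T}$ whose distribution function conjugates $G(\phi)$ into $\mathrm{SO}(2)$; a finite subgroup of $\mathrm{SO}(2)$ is cyclic, so $G(\phi)$ is cyclic, as desired. (If the order is also wanted, applying the same reasoning to $\langle G(B)\cup\{\rho\}\rangle$ for $\rho\in G(\phi)$, where $\phi=\psi(B)$ is as in Corollary~\ref{cor}, shows $G(\phi)=G(B)$, which by \cite{CC} has order $\mathrm{order}\,B=b(\phi)$.)
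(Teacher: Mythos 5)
Your proposal takes a genuinely different route from the paper's, but the central analytic step --- extending each $\rho\in G(\phi)$ to a holomorphic self-map of a fixed annulus around $\mathbb{T}$ --- is not justified, and both sub-claims behind it appear to fail. First, the assertion that $z_{0}\notin Z$ forces $\rho(z_{0})\notin Z$ does not follow from the asymptotics you sketch: if $\zeta=\rho(z_{0})$ were a critical point of \emph{odd} local multiplicity $m'$, a continuous $\mathbb{T}$-valued solution of $\rho(z)-\zeta\sim C(z-z_{0})^{1/m'}$ is perfectly possible, since a change of branch across $z_{0}$ rotates the limiting direction by $(1+2k)\pi/m'$, and this equals $\pi$ for a suitable $k$ exactly when $m'$ is odd; your ``cannot be single-valued and continuous'' reasoning rules out at most the even case. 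Second, the passage across $\zeta\in Z$ assumes without proof that $m/m'$ is a positive integer, i.e.\ that the local multiplicity of $\phi$ at $\rho(\zeta)$ divides that at $\zeta$; nothing in the setup forces this. The paper sidesteps both issues: Lemma~\ref{GP} runs the same winding-number computation you have, but on the complement of the \emph{finite} (rather than empty) set $\Lambda=\{t:\phi'(e^{it})\phi'(\rho(e^{it}))=0\}$, concluding that $\rho$ is an orientation-preserving automorphism of $\mathbb{T}$ without any claim about where $\rho$ sends $Z$; Corollary~\ref{uni} then shows $\rho$ is determined by a single value, giving finiteness directly.

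If you drop the holomorphic-extension step and instead invoke Lemma~\ref{GP} and Corollary~\ref{uni} in its place, the remainder of your argument --- degree one via multiplicativity of winding numbers, finiteness, and then topological conjugation of the finite group of orientation-preserving circle homeomorphisms into $\mathrm{SO}(2)$ by the distribution function of an averaged invariant measure --- is correct and gives a clean, more conceptual proof of cyclicity. By contrast, the paper's endgame is an elementary induction on the cyclically ordered fiber $\phi^{-1}(\phi(\xi_{0}))\cap\mathbb{T}$, which has the side benefit of setting up the divisibility statements $o(\phi)\,|\,N(\phi-\phi(\xi),\mathbb{T})$ and $o(\phi)\,|\,\mathrm{wind}(\phi,\phi(a))$ proved immediately afterwards in Corollary~\ref{div}; the conjugation argument yields cyclicity but not those finer quantitative facts.
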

Let $o(\phi)$ denote the order of $G(\phi); $ that is,
$o(\phi)=\sharp G(\phi).$ We thus have four integer quantities for a
function $\phi$: $ o(\phi), $ $b(\phi),$ $n(\phi)$ and $N(\phi).$ We
will prove that if $\phi$ is in $ H^{\infty}
(\overline{\mathbb{D}})$,  then   $ b(\phi)\leq o(\phi)\leq n(\phi)$
and $ o(\phi) \leq N(\phi)$ (see Section 2).

  For a finite Blaschke product $B$,  $o(B)=$ order $B=n(B)=N(B)$.
 More generally, we will prove that each nonconstant meromorphic function  in $\mathfrak{M}(\overline{\mathbb{D}})$ enjoys
this property.
 \begin{thm} \label{ratgroup} Suppose $\phi$ is a nonconstant function in
$\mathfrak{M}(\overline{\mathbb{D}})$. Then $$
 n(\phi)=b(\phi)=o(\phi)=N(\phi).$$
\end{thm}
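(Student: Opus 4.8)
The strategy is to reduce everything to the Cowen--Thomson representation and then to the case $b(\phi)=1$. Fix a Cowen--Thomson representation $\phi=\psi(B)$ as in Theorem~\ref{Tm1}, with $m:=\mathrm{order}\,B=b(\phi)$. A function in $\mathfrak{M}(\overline{\mathbb{D}})$ is holomorphic on a neighbourhood of $\overline{\mathbb{D}}$, hence lies in $H^\infty(\overline{\mathbb{D}})$, so the inequalities $b(\phi)\le o(\phi)\le n(\phi)$, $o(\phi)\le N(\phi)$ and $b(\phi)\le N(\phi)$ (from Section~2 and the introduction) all apply. Consequently it suffices to establish the two bounds
$$n(\phi)\le b(\phi)\qquad\text{and}\qquad N(\phi)\le b(\phi);$$
then $b(\phi)\le o(\phi)\le n(\phi)\le b(\phi)$ and $b(\phi)\le o(\phi)\le N(\phi)\le b(\phi)$ force all four quantities to coincide.

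The first step is to show $\psi\in\mathfrak{M}(\overline{\mathbb{D}})$. The Blaschke product $B$ extends to a rational self-map of $\widehat{\mathbb{C}}$ of degree $m$ with $B^{-1}(\overline{\mathbb{D}})=\overline{\mathbb{D}}$; off its finitely many critical values it admits $m$ local holomorphic inverses, and along any branch $\sigma$ one has $\psi=\phi\circ\sigma$ wherever this makes sense over $\mathbb{D}$. Any two branches agree on the overlap of their domains, because the meromorphic relation $\phi(z)=\phi(z')$ holds on the whole algebraic correspondence $\{(z,z'):B(z)=B(z')\}$ once it holds on the (nonempty, open) part of a branch with $z,z'\in\mathbb{D}$ --- and for $w\in\mathbb{D}$ \emph{all} preimages of $w$ under $B$ already lie in $\mathbb{D}$. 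Hence continuing $\psi$ along paths in $\mathbb{C}$ avoiding the critical values of $B$ and the $B$-images of the poles of $\phi$ produces trivial monodromy, so $\psi$ is a single-valued meromorphic function on $\mathbb{C}$; it has no pole on $\overline{\mathbb{D}}=B(\overline{\mathbb{D}})$ since $\phi$ has none and the relevant inverse branches of $B$ keep $\overline{\mathbb{D}}$ in $\overline{\mathbb{D}}$. Maximality of $B$ in the representation forces $b(\psi)=1$ (a factorization $\psi=\psi_1(B_1)$ with $\mathrm{order}\,B_1\ge2$ would give $\phi=\psi_1(B_1\circ B)$ with $\mathrm{order}(B_1\circ B)>m$). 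Applying Theorem~\ref{rational} to $\psi$ then yields $N(\psi)=1$ and the FSI property for $\psi$ (alternatively, one may invoke the FSI-decomposability of meromorphic functions, Theorem~\ref{FSI2}).

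For $N(\phi)\le m$: since $N(\psi)=1$, there is $\zeta_0\in\mathbb{T}$ with $\psi^{-1}(\psi(\zeta_0))\cap\mathbb{T}=\{\zeta_0\}$; as $B|_{\mathbb{T}}$ is a smooth $m$-fold covering of $\mathbb{T}$, taking $\eta_0\in\mathbb{T}$ with $B(\eta_0)=\zeta_0$ gives $\{w\in\mathbb{T}:\phi(w)=\phi(\eta_0)\}=B^{-1}(\zeta_0)\cap\mathbb{T}$, of cardinality $m$, so $N(\phi)\le m$. For $n(\phi)\le m$ I first prove the base case $n(\psi)=1$. Using the FSI property of $\psi$, choose $\xi\in\mathbb{T}$ with $\psi'(\xi)\ne0$, with $\psi^{-1}(\psi(\xi))\cap\mathbb{T}=\{\xi\}$, and with $\psi(\xi)\notin\psi(\mathbb{D})$; such $\xi$ exists because the first two conditions are cofinite in $\mathbb{T}$ while $\{\xi:\psi(\xi)\notin\psi(\mathbb{D})\}$ is infinite (one has $\partial\psi(\mathbb{D})\subseteq\psi(\mathbb{T})$, and a nonempty bounded open set has infinite topological boundary), and an infinite set meets every cofinite set. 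Near $\psi(\xi)$ the curve $\psi(\mathbb{T})$ is an embedded real-analytic arc, and $\psi(\mathbb{D}\cap U)$ lies on one side of it for a small neighbourhood $U\ni\xi$; a value $w_0$ on that side and close enough to $\psi(\xi)$ then satisfies $w_0\in\psi(\mathbb{D})\setminus\psi(\mathbb{T})$ and $\psi^{-1}(w_0)\cap\mathbb{D}=\{b_0\}$ with $b_0$ a simple zero of $\psi-w_0$ --- a second preimage along a sequence $w_n\to\psi(\xi)$, $z_n\in\mathbb{D}\setminus U$, would subconverge to a $z^\ast$ with $\psi(z^\ast)=\psi(\xi)$, forcing $z^\ast\in\mathbb{D}$ and hence $\psi(\xi)\in\psi(\mathbb{D})$, a contradiction. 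By the argument principle $\mathrm{wind}(\psi,w_0)=1$, so $n(\psi)=1$. Transplanting through $B$: $\phi(\mathbb{T})=\psi(\mathbb{T})$, and since $B:\mathbb{D}\to\mathbb{D}$ is proper of degree $m$, for any $a\in B^{-1}(b_0)\cap\mathbb{D}$ (nonempty, as $B$ is onto $\mathbb{D}$) the function $\phi-\phi(a)=(\psi-w_0)\circ B$ has no zero on $\mathbb{T}$ and exactly $m$ zeros in $\mathbb{D}$ counted with multiplicity; as $\phi(a)=w_0\notin\phi(\mathbb{T})$ this gives $\mathrm{wind}(\phi,\phi(a))=m$, whence $n(\phi)\le m$. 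Combining the two bounds with the inequalities of the first paragraph completes the proof.

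I expect the genuine difficulty to be the second paragraph --- controlling the monodromy of the local inverses of the rational map $B$ so as to place $\psi$ back in $\mathfrak{M}(\overline{\mathbb{D}})$ --- together with the point-set input in the third paragraph that $\psi(\xi)\notin\psi(\mathbb{D})$ for infinitely many $\xi\in\mathbb{T}$. It is precisely there, through the holomorphic continuation of $\psi$ across $\mathbb{T}$, that meromorphicity is essential, and where merely smooth symbols such as those in Examples~\ref{36} and~\ref{exam3} break down.
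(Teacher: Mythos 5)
The reduction you propose — pass to the Cowen--Thomson representation $\phi=\psi(B)$, show $\psi\in\mathfrak{M}(\overline{\mathbb{D}})$ with $b(\psi)=1$, and then transfer the bounds on $N$ and $n$ back through $B$ — is elegant and the transfer steps in your final paragraph are carried out correctly. The fatal problem is the sentence ``Applying Theorem~\ref{rational} to $\psi$ then yields $N(\psi)=1$ and the FSI property for $\psi$ (alternatively, one may invoke \ldots\ Theorem~\ref{FSI2}).'' This is circular. Trace the dependencies in the paper: the proof of Theorem~\ref{rational2} uses Theorem~\ref{FSI2} for the implication $(1)\Rightarrow(2)$ and Corollary~\ref{minimal} for $(3)\Rightarrow(1)$; the proof of Theorem~\ref{FSI2} in turn invokes Corollary~\ref{minimal} directly; and Corollary~\ref{minimal} states precisely that $N(\phi)=b(\phi)$ for $\phi\in\mathfrak{M}(\overline{\mathbb{D}})$ and is proved from Proposition~\ref{rat3} ($o(\phi)=b(\phi)$). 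These statements are exactly the nontrivial content of the theorem you are asked to prove. In other words, from $b(\psi)=1$ one only knows $b(\psi)\le N(\psi)$ a priori; the reverse inequality $N(\psi)\le b(\psi)$ is what must be established, and citing Theorem~\ref{rational} or Theorem~\ref{FSI2} presupposes it.

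Your remark that the genuine difficulty lies in the monodromy argument for placing $\psi$ back in $\mathfrak{M}(\overline{\mathbb{D}})$ misidentifies the hard point. That step, while requiring care, is comparatively routine (the paper does it cleanly in Lemma~\ref{rab} and the first part of Theorem~\ref{mero}, neither of which uses the theorem in question). The genuine difficulty is precisely the step you outsource to Theorem~\ref{rational}: proving $N(\psi)=1$ when $b(\psi)=1$. The paper handles this by a direct construction applied to $\phi$ itself, not by reduction to $b=1$. It selects a generic point $\xi_0\in\mathbb{T}$ with $\phi(\xi_0)\in\partial\phi(\mathbb{D})$, forms $n_0=N(\phi-\phi(\xi_0),\mathbb{T})$ local inverses $\rho_0,\dots,\rho_{n_0-1}$ that carry an arc of $\mathbb{T}$ into $\mathbb{T}$, shows via Lemmas~\ref{ac1}, \ref{value}, \ref{r15} that this family is closed under analytic continuation on $\mathbb{C}$ minus a countable set with values in a fixed bounded region, and assembles $B=\prod_i\widetilde{\rho_i}$ into a finite Blaschke product of order $n_0$ satisfying $\phi=h(B)$; this yields $b(\phi)\ge n_0\ge o(\phi)\ge b(\phi)$, and then Corollary~\ref{minimal} and a Rouch\'e argument close the gap to $N(\phi)$ and $n(\phi)$. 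To make your proposal non-circular you would need to reproduce essentially this construction for $\psi$ (or for $\phi$ directly), and at that point the $b=1$ reduction buys little.
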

\noindent In particular, for $\phi\in
\mathfrak{M}(\overline{\mathbb{D}})$ the Toeplitz operator $T_\phi$
is totally Abelian if and only if $o(\phi)=1$; equivalently, the
identity map is  the only continuous map $\rho: \mathbb{T}\to
\mathbb{T}$ satisfying $\phi (\rho)=\phi.$

\vskip2mm

This paper is arranged as follows.
 Section 2 first
 provides some basic properties of the group $G(\phi)$ for $\phi$ in $ H^{\infty} (\overline{\mathbb{D}})$ and
 gives the proof of Theorem \ref{group}.  Section 3
  focuses on Toeplitz operators with meromorphic symbols, discusses the MWN property,
  $o(\phi)$, $N(\phi)$
  and Cowen-Thomson order $b(\phi)$ of $\phi$ for $\phi\in
  \mathfrak{M}(\overline{\mathbb{D}})$, and
   gives the proof of Theorem   \ref{ratgroup}.
 Section 4 first presents the proof of  Theorem
 \ref{rational}, and then give  further results on
   FSI and FSI-decomposable properties.
     Section 5 constructs some examples. On one hand, we give some  totally Abelian Toeplitz operators defined
  by symbols in $\mathfrak{M}(\overline{\mathbb{D}})  $. On the other hand,    some
examples   show that  conclusion of Theorem  \ref{rational} can fail even if the associated functions
    have good  smoothness on $\mathbb{T}$.

\section{The group $G(\phi)$ }
This section  provides some basic properties of $G(\phi)$.

For a function $\phi$   holomorphic on the closure of  a domain
$\Omega,$ $N(\phi,\Omega)$ or $N(\phi,\overline{\Omega})$ denotes
the number of  zeros of $\phi$ on $\Omega$
 or $\overline{\Omega}$ respectively, counting multiplicity.
The  winding number of $\phi$ is defined to be wind $ (\phi,0).$

The following shows that each member in  $G(\phi)$ has very strong
restriction.
\begin{lem} \label{GP} Suppose  $\phi$ is a nonconstant function
in $H^\infty(\overline{\mathbb{D}})$.
 Then every $\rho\in G(\phi)$ is an automorphism of $\mathbb{T}$ with winding number $1$.
\end{lem}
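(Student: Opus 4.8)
I would prove the two assertions in turn; the statement on winding numbers is short, while showing $\rho$ is a homeomorphism is the substantial part. For the winding number, one use of the argument principle reduces things to topology. Since $\phi$ is holomorphic and nonconstant on a neighbourhood of $\overline{\mathbb{D}}$, $\phi(\mathbb{D})$ is open and nonempty and $\phi(\mathbb{T})$ is nowhere dense, so there is $a\in\mathbb{D}$ with $\phi(a)\notin\phi(\mathbb{T})$; then $\phi-\phi(a)$ has no zero on $\mathbb{T}$, and the argument principle gives $\mathrm{wind}(\phi,\phi(a))=N(\phi-\phi(a),\mathbb{D})\geq 1$ (the point $a$ is one such zero). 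Writing $d=\mathrm{wind}(\rho,0)\in\mathbb{Z}$ for the topological degree of $\rho$ and using that winding numbers are multiplicative under composition, $\mathrm{wind}(\phi\circ\rho,c)=d\,\mathrm{wind}(\phi,c)$ for all $c\notin\phi(\mathbb{T})$; taking $c=\phi(a)$ and using $\phi\circ\rho=\phi$ forces $d=1$. In particular $d\neq0$, so $\rho$ is onto.

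For the bijectivity, the plan is to upgrade $\rho$ from a continuous map to the boundary restriction of a holomorphic self-map of $\mathbb{D}$. First, $\rho$ is constant on no sub-arc (otherwise $\phi$ would be constant on a sub-arc, hence everywhere), and, with $E=\{w\in\mathbb{T}:\phi'(w)=0\}$ finite, the preimage $\rho^{-1}(E)$ is finite because each fibre $\rho^{-1}(w)$ with $w\in E$ lies in the zero set on $\overline{\mathbb{D}}$ of the nonconstant function $\phi-\phi(w)$. Next, at any $z_0$ with $\rho(z_0)\notin E$ the map $\phi$ has a holomorphic local inverse $\psi$ near $\rho(z_0)$, and uniqueness of the continuous branch of the equation $\phi(w)=\phi(z)$ forces $\rho=\psi\circ\phi$ near $z_0$; hence $\rho$ extends holomorphically across $\mathbb{T}$ on each of the finitely many open arcs of $\mathbb{T}\setminus\rho^{-1}(E)$, there being $\phi$ followed by a branch of $\phi^{-1}$ continued along $\phi(\mathbb{T})$.

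Two tasks then remain. First, glue the holomorphic pieces across the exceptional points: near $z_0\in\rho^{-1}(E)$, writing $w_0=\rho(z_0)$, $\phi(w)-\phi(w_0)=(w-w_0)^{m}u(w)$ and $\phi(z)-\phi(z_0)=(z-z_0)^{m_0}\widetilde u(z)$ with $u(w_0),\widetilde u(z_0)\neq0$, the relation $\phi\circ\rho=\phi$ becomes $(\rho(z)-w_0)^{m}u(\rho(z))=(z-z_0)^{m_0}\widetilde u(z)$, from which one extracts $m\mid m_0$ and the matching of the two one-sided branches of $\rho$ at $z_0$, using that $|\rho|\equiv1$ on $\mathbb{T}$ and that $d=1$ makes $\rho$ orientation-preserving (this excludes a fold). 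Second, continue the resulting function — holomorphic near $\mathbb{T}$ with unimodular boundary values — from an annular neighbourhood of $\mathbb{T}$ into $\mathbb{D}$; here it is essential that $\phi$ be holomorphic on the whole disk, which makes $\phi\colon\overline{\mathbb{D}}\to\phi(\overline{\mathbb{D}})$ a proper branched covering, so that the branch of $\phi^{-1}$ singled out by $\rho$ along $\phi(\mathbb{T})$ continues single-valuedly over $\phi(\mathbb{D})$, and composition with $\phi$ produces a holomorphic function on $\mathbb{D}$, continuous on $\overline{\mathbb{D}}$, agreeing with $\rho$ on $\mathbb{T}$. I expect this second task — controlling the monodromy of the global branch — to be the main obstacle; it is where complex analysis genuinely enters, through the full-disk hypothesis. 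Granting both tasks, $\rho$ is a bounded holomorphic function on $\mathbb{D}$, continuous up to $\mathbb{T}$ and unimodular there, hence a finite Blaschke product whose order equals its number of zeros in $\mathbb{D}$, namely $d=1$. A Blaschke product of order $1$ is a Möbius automorphism of $\mathbb{D}$, so $\rho|_{\mathbb{T}}$ is an automorphism of $\mathbb{T}$ with winding number $1$.
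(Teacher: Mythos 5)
Your argument for the winding number is correct and in fact cleaner than the paper's: the paper computes the argument-principle integral directly via a change of variables $t\mapsto\theta(t)$, whereas you invoke multiplicativity of degree under composition, $\mathrm{wind}(\phi\circ\rho,c)=d\cdot\mathrm{wind}(\phi,c)$, and then divide. Both are fine, and surjectivity of $\rho$ from $d\neq0$ is standard.

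The injectivity argument, however, rests on a false claim. You propose to show that $\rho$ is the boundary restriction of a bounded holomorphic function on $\mathbb{D}$, hence a finite Blaschke product, hence (having winding number $1$) a M\"obius automorphism of $\mathbb{D}$. If that were so, then for a finite Blaschke product $B$ of order $n$, every element of the cyclic group $G(B)$ of order $n$ would be the restriction to $\mathbb{T}$ of a M\"obius automorphism $\eta$ of $\mathbb{D}$ with $B\circ\eta=B$ on $\mathbb{D}$; conjugating $\eta$ to a rotation forces $B$ to be a M\"obius function of $z^n$. This is a very special circumstance. For a generic Blaschke product of order $n\geq 3$, the nontrivial elements of $G(B)$ are algebraic but not rational in $z$, so they are real-analytic homeomorphisms of $\mathbb{T}$ which do \emph{not} extend holomorphically into $\mathbb{D}$. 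The ``monodromy of the global branch'' you flag as the main obstacle is a genuine obstruction, not a technical point to be patched: the branch of $\phi^{-1}$ picked out by $\rho$ along $\phi(\mathbb{T})$ does not, in general, continue single-valuedly over $\phi(\mathbb{D})$, precisely because the branched covering $\phi\colon\overline{\mathbb{D}}\to\phi(\overline{\mathbb{D}})$ is not a normal covering.

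The paper's injectivity step avoids analytic continuation into $\mathbb{D}$ entirely. Suppose $\rho(\xi_1)=\rho(\xi_2)=\eta$, and let $A$ be the (finite) zero set of $\phi-\phi(\eta)$ on $\mathbb{T}$. Since $\phi\circ\rho=\phi$, we have $\rho(A)\subseteq A$; and since $\rho\colon\mathbb{T}\to\mathbb{T}$ is surjective, for each $\xi'\in A$ any preimage $\xi$ of $\xi'$ also lies in $A$, so $\rho|_A\colon A\to A$ is surjective, hence bijective on the finite set $A$. But $\xi_1,\xi_2\in A$ and $\rho(\xi_1)=\rho(\xi_2)$, a contradiction. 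This is the step you should replace your ``two tasks'' with; once you have it, the holomorphic extension across arcs of $\mathbb{T}$ (your first task) is unnecessary for the lemma, though it is a true and sometimes useful observation about members of $G(\phi)$.
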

\begin{proof}
For each  $\rho\in G(\phi)$,  define
 $$\Lambda=\{t\in[0,2\pi):\phi'(e^{it})\phi'(\rho(e^{it}))=0\}.$$
 We will show that $\Lambda$ is a finite set.
 In fact, let $  \mathcal{Z}'$ denote the zero of $\phi'$ on $\overline{\mathbb{D}}$
  and put
  $$\mathcal{F}= \phi^{-1}(\phi(  \mathcal{Z}'))\cap \mathbb{T}.$$
Since $\phi\in H^\infty(\overline{\mathbb{D}})$,   $\mathcal{F}$ is
a finite set.
  If $\phi'(\rho(e^{it}))=0$, then $\rho(e^{it})\in \mathcal{F}.$
  Therefore, $$e^{it}\in \bigcup_{ \varsigma \in \mathcal{F}} \{z\in \mathbb{T}:
   \phi(\rho(z))-\phi(\varsigma)=0\}=\bigcup_{ \varsigma \in \mathcal{F}} \{z\in \mathbb{T}:
   \phi(z)-\phi(\varsigma)=0\}.$$
Since $\phi$ is  holomorphic on $\overline{\mathbb{D}}$ and
nonconstant,
 the right hand side is a union of
finitely many finite sets. Hence $\{t\in[0,2\pi):
\phi'(\rho(e^{it}))=0\}$ is a finite set, and so is $\Lambda$.

 Write $(0,2\pi) \setminus \Lambda=\bigcup_{k=0}^{n-1}(t_k,t_{k+1})$,
 where $0=t_0<t_1<\dots<t_n=2\pi$.
  Since $\rho$ is continuous, there exists a real continuous function $\theta$
  on   $[0,2\pi]$ such that $\rho(e^{it})=e^{i\theta(t)}$.
  Then $$\phi(e^{it})=\phi(e^{i\theta(t)}),$$
   the Inverse Function Theorem implies that $\theta$ is differentiable on $(0,2\pi) \setminus \Lambda$.
    Taking  derivatives of $t$  yields that
\begin{equation}\theta'(t)=\frac {e^{it}\phi'(e^{it})}{e^{i\theta(t)}\phi'(e^{i\theta(t)})}\ne0,\  t\in(0,2\pi)
\setminus\Lambda. \label{21}\end{equation} Hence for $0\le k\le
n-1$, $\theta$ is   strictly monotonic on each interval
 $(t_k,t_{k+1}) .$
 $ $
Since $\phi(\mathbb{T})$ is of zero area measure and
$\phi(\mathbb{D})$ is open, one can pick $\lambda\in\mathbb{D}$ such
that $\phi(\lambda)\notin\phi(\mathbb{T})$. By Argument Principle,
we have
\begin{align*}
N(\phi-\phi(\lambda),\mathbb{D}) &=\frac {1}{2\pi i}\int_{\mathbb{T}}\frac {\phi'(\xi)}{\phi(\xi)-\phi(\lambda)} \mathrm{d}\xi \\
&=\frac {1}{2\pi}\int_0^{2\pi}\frac {\phi'(e^{it})}{\phi(e^{it})-\phi(\lambda)} e^{it}\mathrm{d}t .
\end{align*}
\begin{align*}&=\frac {1}{2\pi}\sum_{k=0}^{n-1}\int_{t_k}^{t_{k+1}} \frac {\phi'(e^{it})}{\phi(\rho(e^{it}))-\phi(\lambda)} e^{it}\mathrm{d}t \\
&\!\!\!  \stackrel{\small{(\ref{21})}}{=}\frac {1}{2\pi}\sum_{k=0}^{n-1}\int_{t_k}^{t_{k+1}} \frac {\phi'(e^{i\theta(t)})}{\phi(e^{i\theta(t)})-\phi(\lambda)} e^{i\theta(t)}\theta'(t)\mathrm{d}t \\
&= \frac {1}{2\pi}\sum_{k=0}^{n-1}\int_{\theta(t_k)}^{\theta(t_{k+1})} \frac {\phi'(e^{i\theta})}{\phi(e^{i\theta})-\phi(\lambda)} e^{i\theta}\mathrm{d}\theta \\
&=\frac {1}{2\pi}\int_{\theta(0)}^{\theta(2\pi)} \frac {\phi'(e^{i\theta})}{\phi(e^{i\theta})-\phi(\lambda)} e^{i\theta}\mathrm{d}\theta \\
&=\frac {\theta(2\pi)-\theta(0)}{4\pi^2i}\int_{\mathbb{T}}\frac {\phi'(\xi)}{\phi(\xi)-\phi(\lambda)} \mathrm{d}\xi \\
&=\sharp \rho\cdot N(\phi -\phi(\lambda),\mathbb{D}),
\end{align*}
where $ \sharp \rho=\mathrm{ wind} \,(\rho,0) .$ Since
$N(\phi-\phi(\lambda),\mathbb{D})$ is a positive integer, we have
$\sharp \rho=1$. This implies that $\rho$ is surjective.

It remains to   show  that $\rho$ is injective.
 Otherwise, there exist two points $\xi_1$ and $\xi_2$ in $\mathbb{T}$
 such that $\rho(\xi_1)=\rho(\xi_2)= \eta$. Let $A$ be
  the set of zeros of $\phi-\phi(\eta)$ in $\mathbb{T}$,
  and $\rho|_A:A\rightarrow A$ is
  surjective as $\rho  $ is
  surjective.
   Since $A$ is a finite set, $\rho|_A$ is actually a bijection,
   which is a contradiction to $\rho(\xi_1)=\rho(\xi_2).$  The proof is complete.
\end{proof}

\begin{cor} Suppose that $\phi$ is  a nonconstant function
in $H^\infty(\overline{\mathbb{D}})$ and \label{uni} both  $\rho_1 $
and  $\rho_2$ belong to $G(\phi)$. If $\rho_1(\xi_0)=\rho_2(\xi_0)$
for some point $\xi_0\in\mathbb{T}$,
    then $\rho_1=\rho_2$.
\end{cor}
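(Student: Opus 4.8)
The plan is to deduce the statement from the following rigidity property of a single element: \emph{if $\sigma\in G(\phi)$ satisfies $\sigma(\xi_0)=\xi_0$ for some $\xi_0\in\mathbb{T}$, then $\sigma$ is the identity map.} To reduce to this, first note that $G(\phi)$ is a group: it is closed under composition because $\phi\circ\rho_1=\phi\circ\rho_2=\phi$ forces $\phi\circ(\rho_1\circ\rho_2)=\phi$, and it is closed under inverses because, by Lemma \ref{GP}, every $\rho\in G(\phi)$ is an automorphism of $\mathbb{T}$, so $\rho^{-1}$ is continuous, and composing $\phi\circ\rho=\phi$ on the right with $\rho^{-1}$ gives $\phi\circ\rho^{-1}=\phi$. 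Now, given $\rho_1,\rho_2\in G(\phi)$ with $\rho_1(\xi_0)=\rho_2(\xi_0)$, the map $\sigma:=\rho_2^{-1}\circ\rho_1$ lies in $G(\phi)$ and fixes $\xi_0$; once we know $\sigma=\mathrm{id}$, we get $\rho_1=\rho_2$, as desired.

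To prove the rigidity property, suppose toward a contradiction that $\sigma\in G(\phi)$ fixes $\xi_0$ but $\sigma\neq\mathrm{id}$. Let $S=\{\xi\in\mathbb{T}:\sigma(\xi)=\xi\}$; this is a closed, nonempty ($\xi_0\in S$), proper subset of $\mathbb{T}$. Choose a connected component arc $I=(a,b)$ of the open set $\mathbb{T}\setminus S$ (with $a=b$ if $S$ is a single point); its endpoints $a,b$ belong to $S$. By Lemma \ref{GP}, $\sigma$ is an orientation-preserving homeomorphism of $\mathbb{T}$; since it fixes $a$ and $b$, it cannot interchange the two components of $\mathbb{T}\setminus\{a,b\}$, so it maps the open arc $(a,b)$ onto itself and restricts there to an orientation-preserving homeomorphism with no fixed point. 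Consequently, in the linear order induced on $(a,b)$, either $\sigma(x)>x$ for every $x\in(a,b)$ or $\sigma(x)<x$ for every such $x$; in either case the iterates $\sigma^{n}(x_0)$, $n\geq 0$, of a fixed point $x_0\in(a,b)$ form a strictly monotone sequence, hence an infinite set of distinct points of $\mathbb{T}$.

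Finally, since $\phi\circ\sigma=\phi$, iteration gives $\phi(\sigma^{n}(x_0))=\phi(x_0)$ for all $n$, so $\phi-\phi(x_0)$ vanishes at infinitely many points of $\mathbb{T}$. But $\phi$ is holomorphic and nonconstant on the compact set $\overline{\mathbb{D}}$, so $\phi-\phi(x_0)$ has only finitely many zeros there; this contradiction forces $\sigma=\mathrm{id}$ and completes the proof. I do not expect a genuine obstacle in this argument; the only places needing a little care are the verification that $G(\phi)$ is closed under inverses — which is exactly where Lemma \ref{GP} enters — and the elementary one-dimensional dynamics ensuring that a fixed-point-free orientation-preserving self-homeomorphism of an arc produces infinitely many distinct iterates.
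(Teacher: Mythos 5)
Your proof is correct, but it takes a genuinely different route from the paper's. The paper argues directly with the pair $(\rho_1,\rho_2)$: for an arbitrary $\lambda\in\mathbb{T}\setminus\{\xi_0\}$, it lists the finite set $\{\xi_0\}\cup A$ (where $A$ is the $\mathbb{T}$-fiber of $\phi$ over $\phi(\lambda)$) in cyclic order, notes that $\rho_1$ and $\rho_2$ are orientation-preserving homeomorphisms of $\mathbb{T}$ that both permute this set and agree at $\xi_0$, and concludes by a cyclic-order matching (``as $\xi$ goes from $\xi_0$ to $\xi_1$, the images coincide at $\xi_1$'') that they agree at every point of the set, hence at $\lambda$. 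You instead pass through the group structure: form $\sigma=\rho_2^{-1}\circ\rho_1$ (which requires closure of $G(\phi)$ under inverses, itself coming from Lemma \ref{GP}), reduce the statement to ``the stabilizer of a point is trivial,'' and then rule out a nontrivial $\sigma$ fixing $\xi_0$ by a one-dimensional dynamics argument: on a complementary arc of the fixed-point set, $\sigma$ is a fixed-point-free orientation-preserving homeomorphism, so the orbit of any point is infinite and monotone, giving infinitely many points in a single $\phi$-fiber on $\mathbb{T}$, contradicting nonconstancy and holomorphy of $\phi$ on $\overline{\mathbb{D}}$. Both arguments rest on the same two ingredients (Lemma \ref{GP} and finiteness of fibers), but deploy them differently: the paper's cyclic-order matching avoids invoking the group structure at all and is more combinatorial, while yours is structurally cleaner — ``trivial point-stabilizer'' is the conceptual statement — and isolates a reusable dynamical fact. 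One small slip of wording: you call $x_0\in(a,b)$ ``a fixed point,'' but you mean a chosen point; the argument is otherwise complete.
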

\begin{proof} Let $\lambda$ be
an arbitrary point   in $\mathbb{T} \setminus \{\xi_0\}$. Let
$A$ be the  zero set of $\phi-\phi(\lambda)$ in $\mathbb{T}$.
 Arrange the points of $\{\xi_0\}\cup A$
 in the anti-clockwise direction:
 $$\xi_0, \xi_1, \dots, \xi_{n_0}\, (n_0\ge1).$$
  It is clear that $\rho_1(\{\xi_0\}\cup A)=\rho_2(\{\xi_0\}\cup A)$.
  By Lemma \ref{GP}, both $\rho_1$ and $\rho_2$ are  automorphisms
  of $\mathbb{T}$ with winding number $\#\rho_1=\#\rho_2=1$. Thus,
   when $\xi$ moves   along $\mathbb{T}$ in the positive direction,
   the images   $\rho_1(\xi)$ and $\rho_2(\xi)$ run in the same direction.
  As $\xi$ goes from $\xi_0$ to $\xi_1$,   $\rho_1$ and $\rho_2$ must coincide at the point $\xi_1$.
   By induction, we have $\rho_1(\xi_k)=\rho_2(\xi_k)$,  $1\le k\leq n_0$.
   In particular, $\rho_1(\lambda)=\rho_2(\lambda)$. The proof is finished.
\end{proof}

 In the case of finite Blaschke products,  $G(\phi) $ is a finite
cyclic group \cite{CC}.  In what follows we will prove that this result  also is true for functions in $H^{\infty} (\overline{\mathbb{D}})$. Now we come to the proof of Theorem \ref{group} (=Theorem \ref{group2}), which is represented as below.
   \begin{thm} Suppose $\phi$ is a  nonconstant function in      \label{group2}
$H^{\infty} (\overline{\mathbb{D}})$. Then   $G(\phi) $ is a finite
cyclic group.
\end{thm}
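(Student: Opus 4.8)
The plan is to realize $G(\phi)$ as a subgroup of the rotation group of $\mathbb{T}$, after first showing that it actually forms a group rather than merely a semigroup. By Lemma \ref{GP}, every $\rho \in G(\phi)$ is an automorphism of $\mathbb{T}$ with winding number $1$; in particular each $\rho$ is a homeomorphism of the circle preserving orientation. First I would check that $G(\phi)$ is closed under composition (clear, since $\phi(\rho_1 \circ \rho_2) = \phi(\rho_2) = \phi$ when both fix $\phi$) and that the inverse homeomorphism $\rho^{-1}$ again lies in $G(\phi)$ (apply $\phi$ to $\rho^{-1}$: $\phi(\rho^{-1}) = \phi(\rho(\rho^{-1})) = \phi$). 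Thus $G(\phi)$ is a genuine group of orientation-preserving circle homeomorphisms.

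Next I would pin down the rigidity of $G(\phi)$ using Corollary \ref{uni}: if $\rho_1,\rho_2 \in G(\phi)$ agree at a single point of $\mathbb{T}$, then $\rho_1 = \rho_2$. This means the action of $G(\phi)$ on $\mathbb{T}$ is \emph{free}. Fix any point $\xi_0 \in \mathbb{T}$ that is not a branch-type point (say $\phi'(\xi_0) \neq 0$ and $\xi_0$ is not among the finitely many self-intersection points); the evaluation map $\mathrm{ev}_{\xi_0} \colon G(\phi) \to \mathbb{T}$, $\rho \mapsto \rho(\xi_0)$, is then injective, and its image lands inside the finite set $A = \{w \in \mathbb{T} : \phi(w) = \phi(\xi_0)\}$, which is finite because $\phi \in H^\infty(\overline{\mathbb{D}})$ is nonconstant and holomorphic on $\overline{\mathbb{D}}$. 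Hence $G(\phi)$ is finite.

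It remains to show a finite group of orientation-preserving circle homeomorphisms acting freely is cyclic. Here I would either quote the classical fact (a finite group acting freely and continuously on $S^1$ by orientation-preserving maps is cyclic) or give the short direct argument: order the points of a single orbit $\{\rho(\xi_0) : \rho \in G(\phi)\} = \{\zeta_0, \zeta_1, \dots, \zeta_{m-1}\}$ anti-clockwise around $\mathbb{T}$ with $\zeta_0 = \xi_0$, let $\sigma \in G(\phi)$ be the unique element with $\sigma(\xi_0) = \zeta_1$ (the "next" orbit point), and show by the same orientation-and-monotonicity reasoning used in Corollary \ref{uni} that $\sigma$ permutes the orbit cyclically, $\sigma(\zeta_j) = \zeta_{j+1 \bmod m}$; then $\sigma$ has order $m = \#G(\phi)$, so $G(\phi) = \langle \sigma \rangle$ is cyclic. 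The conjugation trick also works: any orientation-preserving circle homeomorphism of finite order is topologically conjugate to a rotation, and conjugating the whole $G(\phi)$ simultaneously (using, e.g., averaging over the group to build an equivariant homeomorphism to a standard model) identifies $G(\phi)$ with a finite subgroup of $SO(2)$, which is cyclic.

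The main obstacle is the last step — passing from "finite group of orientation-preserving homeomorphisms of $\mathbb{T}$ acting freely" to "cyclic" — since this is where one must be careful: freeness of the action is essential (a non-free finite action, like reflections, need not be cyclic), and it is precisely Corollary \ref{uni} that supplies freeness. Everything before that (group closure, finiteness via the injective orbit map) is routine given Lemma \ref{GP} and Corollary \ref{uni}; I would expect the write-up to spend most of its effort making the cyclic-permutation-of-the-orbit argument precise, reusing the monotonicity of the angular lift $\theta$ established in the proof of Lemma \ref{GP}.
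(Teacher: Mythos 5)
Your proposal is correct and takes essentially the same approach as the paper: finiteness from the injective evaluation map (Corollary~\ref{uni}) landing in the finite fiber $\{w\in\mathbb{T}:\phi(w)=\phi(\xi_0)\}$, and cyclicity from the fact that an orientation-preserving homeomorphism permuting a cyclically ordered finite orbit and sending $\zeta_0$ to its immediate successor must rotate the orbit, hence generates all of $G(\phi)$. The paper phrases the last step by indexing through the \emph{full} zero set of $\phi-\phi(\xi_0)$ on $\mathbb{T}$ and doing division with remainder on the spacing $d$, whereas you index directly through the orbit; this is a cosmetic difference, and your parenthetical restriction on $\xi_0$ (avoiding ``the finitely many self-intersection points'') is unnecessary and in fact unjustified in general, though harmless since no step actually uses it.
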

\begin{proof}  By
Lemma 2.1,  $G(\phi)$ is   a group.
  We will show that
$G(\phi) $ is a  finite cyclic group. Note that for   a fixed  point
$\zeta\in\mathbb{T}$,
$\{z\in \mathbb{T}: \phi(z)=\phi(\zeta)\}$ is a finite set, and by
Corollary \ref{uni} $G(\phi) $ is a  finite group.

 Let $\xi_0$ be a point on $\mathbb{T}$, and let
$\xi_0,\cdots, \xi_{n_0-1}$ be all zeros of $\phi-\phi(\xi_0) $ on
$\mathbb{T}$ in the anti-clockwise direction. Then define
$\{\xi_j\}_{j=0}^\infty$ to be  the infinite sequence
$$\xi_0,\cdots, \xi_{n_0-1}; \xi_0,\cdots, \xi_{n_0-1};\cdots.$$
That is, for all  $j$
 $$\xi_j=\xi_{[j]}\, ,\, \mathrm{where} \quad j\equiv [j]\ \, (\!\!\!\!\!\mod
 n_0\,),$$where $0\leq [j]\leq n_0-1$
Let $d$ be the minimal positive integer $l$ for which there is a
member $\rho_0$ in $G(\phi)$ satisfying $\rho_0(\xi_0)=  \xi_l $.
By Lemma
\ref{GP} $\rho_0$ maps each circular arc $\widetilde{\xi_i\xi_{i+1}}  $ 
to  $\widetilde{\xi_{j}\xi_{j+1}}$ for some $j$, preserving the
orientation.  By continuity, if $\rho_0(\xi_0)= \xi_l $, then one
 has
\begin{equation}\rho_0(\xi_i)= \xi_{i+l} , \, 0\leq i\leq n_0-1 . \label{use}\end{equation}
By definition of $d$, there is a member $\tau$ in $G(\phi)$
satisfying $\tau(\xi_0)=  \xi_d$. To finish the proof of  Theorem
\ref{group2}, it suffices to show
 that for each member $\rho$  in $G(\phi)$, there is an integer $ m$ such that
  $\rho=\tau^m$(in the sense of composition).  Write $\rho(\xi_0)=  \xi_l $,
 and there are two integers $k\geq 0$ and $l_0 $   such that  $  0\leq l_0<d $
and
  $$l=kd+l_0.$$
 Letting $\sigma=\tau^{-k}\rho,$   we have
  $\sigma\in G(\phi) $, and by  (\ref{use}) $\sigma(\xi_0)=\xi_{l_0}$.
  By definition of $d$ we have $l_0=0$.
 By Corollary \ref{uni}  $\sigma=id,$ forcing  $\rho=\tau^k$ to complete the proof.  \end{proof} 
For two positive integers $m$ and $n$,  write $m|  n$ to denote that
$m$ divides $n$. By  Theorem \ref{group2}, if $\phi$ is a
nonconstant function in $H^{\infty} (\overline{\mathbb{D}})$,
$G(\phi) $ is a finite cyclic group.  If  $\phi=\psi( B)$ is the
Cowen-Thomson representation of $\phi$,
 then $G(B)$ is a subgroup of $G(\phi),$ and   hence $o(B)| o(\phi).$
Since $o(B)=b(\phi)$, we have $$b(\phi)\,|\, o(\phi).$$

The following  gives  some
 properties of
the order $o(\phi)$ of $ G(\phi)$.
 \begin{cor} Suppose $\phi$ is a nonconstant  function in
$H^{\infty} (\overline{\mathbb{D}})$. Then for each $\xi \in
\mathbb{T}$,  $ o(\phi) \, | N(\phi-\phi(\xi), \mathbb{T}) .$
\label{div} Besides, for each $a\in \mathbb{D}$, if $\phi(a)\not\in
\phi(\mathbb{T})$, then $$ o(\phi) \, | \mathrm{wind}\,
(\phi,\phi(a)).$$
\end{cor}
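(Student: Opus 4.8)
The plan is to derive both divisibility relations from the cyclic structure of $G(\phi)$ furnished by Theorem \ref{group2}, together with Lemma \ref{GP} and Corollary \ref{uni}.

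For the first assertion, fix $\xi\in\mathbb{T}$ and put $A=\{w\in\mathbb{T}:\phi(w)=\phi(\xi)\}$, a finite nonempty set with $\sharp A=N(\phi-\phi(\xi),\mathbb{T})$ (finiteness because $\phi-\phi(\xi)$ is holomorphic on $\overline{\mathbb{D}}$ and nonconstant). Every $\rho\in G(\phi)$ satisfies $\phi(\rho(w))=\phi(w)=\phi(\xi)$ on $A$, so $\rho(A)\subseteq A$; being an automorphism of $\mathbb{T}$ by Lemma \ref{GP}, $\rho$ restricts to a bijection of $A$. Thus $G(\phi)$ acts on $A$, and by Corollary \ref{uni} (applied with $\rho_2=\mathrm{id}$) the only element of $G(\phi)$ with a fixed point is the identity, i.e.\ the action is free. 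Hence every orbit has cardinality $o(\phi)$, and writing $A$ as a disjoint union of orbits gives $o(\phi)\mid N(\phi-\phi(\xi),\mathbb{T})$.

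For the second assertion, fix $a\in\mathbb{D}$ with $c:=\phi(a)\notin\phi(\mathbb{T})$; we may assume $o(\phi)\ge 2$. Exactly as in the proof of Theorem \ref{group2}, choose $\xi_0\in\mathbb{T}$, let $\xi_0,\dots,\xi_{n-1}$ be the zeros of $\phi-\phi(\xi_0)$ on $\mathbb{T}$ in anti-clockwise order (so $n=N(\phi-\phi(\xi_0),\mathbb{T})$), extend the labels periodically by $\xi_j=\xi_{[j]}$, and pick the generator $\tau$ of $G(\phi)$ with $\tau(\xi_0)=\xi_d$, $d\ge 1$ minimal; as in that proof, $d\mid n$, $o(\phi)=n/d$, and by (\ref{use}) $\tau(\xi_i)=\xi_{i+d}$ for all $i$. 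Since $\tau$ is an orientation-preserving homeomorphism of $\mathbb{T}$ (Lemma \ref{GP}), it maps the arc $\widetilde{\xi_i\xi_{i+1}}$ onto $\widetilde{\xi_{i+d}\xi_{i+d+1}}$ preserving orientation, and because $\phi\circ\tau=\phi$ the curve $\phi|_{\widetilde{\xi_{i+d}\xi_{i+d+1}}}$ is an orientation-preserving reparametrization of $\phi|_{\widetilde{\xi_i\xi_{i+1}}}$. Set $c_i=\frac{1}{2\pi i}\int_{\widetilde{\xi_i\xi_{i+1}}}\frac{\phi'(z)}{\phi(z)-c}\,\mathrm{d}z$; each $c_i$ is real, since both endpoints of $\widetilde{\xi_i\xi_{i+1}}$ are mapped by $\phi$ to $\phi(\xi_0)\ne c$, so $\log|\phi-c|$ returns to its initial value along the arc. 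Invariance of such line integrals under orientation-preserving reparametrization yields $c_{i+d}=c_i$ for all $i$, whence, using $d\mid n$,
$$\mathrm{wind}(\phi,\phi(a))=\sum_{i=0}^{n-1}c_i=\frac{n}{d}\sum_{i=0}^{d-1}c_i=o(\phi)\cdot\frac{1}{2\pi i}\int_{\widetilde{\xi_0\xi_d}}\frac{\phi'(z)}{\phi(z)-c}\,\mathrm{d}z,$$
where $\widetilde{\xi_0\xi_d}$ is the anti-clockwise arc from $\xi_0$ through $\xi_1,\dots,\xi_{d-1}$ to $\xi_d$. The last integral equals the winding number around $c$ of the closed curve $\phi|_{\widetilde{\xi_0\xi_d}}$ (closed because $\phi(\xi_d)=\phi(\xi_0)$), hence is an integer, and therefore $o(\phi)\mid \mathrm{wind}(\phi,\phi(a))$.

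The step I expect to require the most care is the second one: checking that the generator $\tau$ can be taken to shift the cyclic labels, that $d\mid n$ and $o(\phi)=n/d$ (the small subgroup-of-$\mathbb{Z}/n\mathbb{Z}$ bookkeeping from the proof of Theorem \ref{group2}, resting on the minimality of $d$ and Corollary \ref{uni}), and that the partial winding numbers $c_i$ are genuinely real and reparametrization-invariant, so that the grouping of $\sum_{i=0}^{n-1}c_i$ into $o(\phi)$ identical blocks is legitimate. The first assertion, by contrast, is immediate once the freeness of the action is established.
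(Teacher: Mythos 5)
Your proof is correct and rests on the same two ideas as the paper's: first, that $G(\phi)$ acts freely (by Corollary \ref{uni}) on the finite fiber $A=\{w\in\mathbb{T}:\phi(w)=\phi(\xi)\}$, so $|A|$ is a multiple of $o(\phi)$; second, that the winding integral over $\mathbb{T}$ decomposes into $o(\phi)$ reparametrization-equivalent closed pieces. The only cosmetic difference is that you package the first part as an abstract orbit-count, whereas the paper reuses the explicit cyclic labeling $\xi_0,\dots,\xi_{n_0-1}$ and the identity $n_0=jd$ from the proof of Theorem \ref{group2}; for the second part the paper works directly with the $j$ big arcs $\widetilde{\xi_{id}\xi_{(i+1)d}}$ rather than first defining the partial windings $c_i$ over the $n$ small arcs and then regrouping them.
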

 \noindent  In particular,
 $$o(\phi)\,| N(\phi ) \quad \mathrm{and} \quad o(\phi)\,| n(\phi )  ,$$
 where $N(\phi)=\min \,\{ N(\phi-\phi(\xi), \mathbb{T}):\, \xi\in \mathbb{T} \} $ and
  $$n(\phi )= \min\, \{\mathrm{wind}\, (\phi, \phi(a)): a\in \mathbb{D}, \phi(a)\not\in \phi(\mathbb{T}) \}.$$
\begin{proof}
   Let us have a close look at the proof of Theorem \ref{group2}.  Fix $\xi_0\in \mathbb{T}$, and let
$\xi_0,\cdots, \xi_{n_0-1}$ be all zeros of $\phi-\phi(\xi_0) $ on
$\mathbb{T}$ in anti-clockwise direction. Let
  $d$ be the minimal positive integer $l$ so that there is a
member $\rho$ in $G(\phi)$ satisfying $\rho(\xi_0)=  \xi_l $, and
 $\tau$ denotes the generator in $G(\phi)$ satisfying \linebreak $\tau(\xi_0)=  \xi_d$.
 Then by Lemma  \ref{GP} we have  $$\tau(\xi_i)= \xi_{i+d} , \, 0\leq i\leq n_0-1 ,$$
 and   $d\,| n_0$.
 Write $$n_0=jd.$$ For $k>0,$  $\tau^k(\xi_0)= \xi_{kd} $.
 By Corollary \ref{uni} we have that $\tau^k $ is the identity map if and only if  $\xi_{kd} =\xi_0$.
Therefore $j$ is the minimal positive number $k$ such that $\tau^k $ is the identity map, and
then
  $$j=o(\phi).$$ Since $n_0=jd,$ $j\,| n_0 $; that is, $ o(\phi) \, | N(\phi-\phi(\xi_0), \mathbb{T}) .$
  The first statement is proved.

   For $  0\leq i\leq j-1  $ let $\gamma_i$ denote the positively oriented circular arc
  $ \widetilde{\xi_{id}\xi_{(i +1)d}},$  ($\xi_{jd}=\xi_0$).
  Then   $\tau^i(\gamma_0)=\gamma_i$,
  and $\phi (\tau^i)=\phi$.  Also noting that $\phi(\gamma_i)$ are closed curves,  we have
 $$\mathrm{wind}\, (\phi(\gamma_i ),\lambda )=   \mathrm{wind }
 \,(\phi( \gamma_0), \lambda), \lambda \in \mathbb{C}\backslash \phi(\mathbb{T}),0\leq i\leq j-1.$$
 Since  $$\mathbb{T}= \bigcup_{i=0}^{j-1} \gamma_i \quad (as \quad curves ),$$
  $$\mathrm{wind} \,(\phi(\mathbb{T}),\lambda)=j \cdot \,\mathrm{wind}\, (\phi(\gamma_0),\lambda),
  \lambda \in \mathbb{C}\backslash \phi(\mathbb{T}).$$
 Thus $ o(\phi) \, | \mathrm{wind}\, (\phi, \lambda).$ In particular, we have
 $$ o(\phi) \, | \mathrm{wind}\, (\phi,\phi(a))$$ for each $a\in \mathbb{D}$ such that
  $\phi(a)\not\in \phi(\mathbb{T})$.
The proof is complete.
 \end{proof}
 Recall that a Jordan curve in $\mathbb{C}$ is
the image of a continuous injective  map from the unit circle
$\mathbb{T}$ into $\mathbb{C}$. For $\phi\in
H^\infty(\overline{\mathbb{D}})$, in the case of $\phi(\mathbb{T})$
being a Jordan curve, we have the following.
\begin{prop}\label{25} Suppose $\phi\in H^\infty(\overline{\mathbb{D}})$ and its image on $\mathbb{T}$  is   a Jordan curve. Then there is a
univalent function $h$ on $\mathbb{D}$ and a finite Blaschke product $B$ satisfying $\phi=h(B).$
In this case, we have   $$n(\phi)=b(\phi)=o(\phi)=N(\phi)=
\mathrm{order}\,B.$$
\end{prop}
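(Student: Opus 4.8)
The plan is to recognize $\phi$, via a Riemann map, as the composition of a finite Blaschke product with a conformal map onto the Jordan domain enclosed by $\phi(\mathbb{T})$, and then to read off all four invariants from the order of that Blaschke product. First I would analyze the geometry. Put $\Gamma=\phi(\mathbb{T})$; by the Jordan curve theorem $\Gamma$ bounds a bounded simply connected domain $\Omega$ together with an unbounded exterior component. Since $\phi$ is nonconstant and holomorphic on $\overline{\mathbb{D}}$, the map $w\mapsto\mathrm{wind}(\phi|_{\mathbb{T}},w)$ is locally constant on $\mathbb{C}\setminus\Gamma$, hence identically $0$ on the exterior and identically some integer $m$ on $\Omega$, and by the Argument Principle $N(\phi-w,\mathbb{D})$ equals this winding number. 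Thus $\phi(\mathbb{D})$ is an open set disjoint from the exterior (where $N=0$), and it cannot contain a point of $\Gamma$ (a neighborhood of such a point would meet the exterior); so $\phi(\mathbb{D})\subseteq\Omega$, and since $\phi(\mathbb{D})\neq\emptyset$ this forces $m\ge1$, after which $N(\phi-w,\mathbb{D})=m\ge1$ for every $w\in\Omega$ gives $\phi(\mathbb{D})=\Omega$. Moreover $\phi|_{\mathbb{D}}:\mathbb{D}\to\Omega$ is proper: the preimage of a compact $K\subseteq\Omega$ is closed in $\overline{\mathbb{D}}$ and, since $\phi(\mathbb{T})=\Gamma$ is disjoint from $\Omega$, does not meet $\mathbb{T}$, hence is compact in $\mathbb{D}$.

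Next I would transport the picture to the disk. Let $\Psi:\Omega\to\mathbb{D}$ be a Riemann map and set $h=\Psi^{-1}$, a univalent holomorphic map of $\mathbb{D}$ onto $\Omega$; because $\Gamma$ is a Jordan curve, Carath\'eodory's theorem gives that $h$ extends to a homeomorphism of $\overline{\mathbb{D}}$ onto $\overline{\Omega}$, so in particular $h$ is injective on $\mathbb{T}$. Then $B:=\Psi\circ\phi:\mathbb{D}\to\mathbb{D}$ is holomorphic and proper, being the composition of the proper map $\phi|_{\mathbb{D}}$ with the homeomorphism $\Psi$; hence $B$ is a finite Blaschke product, whose order equals the degree $m$ of $\phi|_{\mathbb{D}}$. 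By construction $\phi=\Psi^{-1}\circ B=h(B)$ on $\mathbb{D}$, and by continuity on $\overline{\mathbb{D}}$, which is the asserted factorization with $h$ univalent.

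It remains to identify the four quantities with $m=\mathrm{order}\,B$. Since $\phi(\mathbb{D})=\Omega$ is disjoint from $\Gamma=\phi(\mathbb{T})$, every $a\in\mathbb{D}$ has $\phi(a)\notin\phi(\mathbb{T})$ with $\mathrm{wind}(\phi,\phi(a))=m$, so $n(\phi)=m$. The factorization $\phi=h(B)$ with $h\in H^\infty(\overline{\mathbb{D}})$ gives $\mathrm{order}\,B\le b(\phi)$, and combining with $b(\phi)\le n(\phi)$ yields $m\le b(\phi)\le n(\phi)=m$, hence $b(\phi)=m$ (so $h(B)$ is a Cowen--Thomson representation). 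For $N(\phi)$, fix $\eta\in\mathbb{T}$; injectivity of $h$ on $\mathbb{T}$ together with $B(\mathbb{T})=\mathbb{T}$ gives $\{w\in\mathbb{T}:\phi(w)=\phi(\eta)\}=\{w\in\mathbb{T}:B(w)=B(\eta)\}$, a set of exactly $m$ points since $B|_{\mathbb{T}}$ is an $m$-to-one covering of $\mathbb{T}$; thus $N(\phi-\phi(\eta),\mathbb{T})=m$ for every $\eta$ and $N(\phi)=m$. Finally the chain $b(\phi)\le o(\phi)\le n(\phi)$ established in Section 2 forces $o(\phi)=m$.

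I expect the genuine difficulty to lie entirely at the boundary: establishing $\phi(\mathbb{D})=\Omega$ and the properness of $\phi|_{\mathbb{D}}$ cleanly from the Argument Principle, and, above all, invoking Carath\'eodory's extension theorem to pass from univalence of $h$ on $\mathbb{D}$ to injectivity of $h$ on $\mathbb{T}$. That last step is precisely where the Jordan-curve hypothesis is used, and it is what makes the computation of $N(\phi)$ possible; once $\phi=h(B)$ with $h$ injective up to the boundary is in hand, the identifications of $n(\phi)$, $b(\phi)$, $N(\phi)$ and $o(\phi)$ with $\mathrm{order}\,B$ are routine.
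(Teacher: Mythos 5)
Your proof is correct and takes essentially the same route as the paper: factor $\phi$ through a Riemann map $h$ onto the Jordan interior $\Omega$ of $\Gamma=\phi(\mathbb{T})$, note that $B=h^{-1}\circ\phi$ is a proper self-map of $\mathbb{D}$ and hence a finite Blaschke product, and invoke Carath\'eodory's theorem to get injectivity of $h$ on $\mathbb{T}$, after which all four invariants equal $\mathrm{order}\,B$. The only step where you diverge is in showing $\phi(\mathbb{D})=\Omega$ and properness of $\phi|_{\mathbb{D}}$: you argue directly from the Argument Principle and locally constant winding numbers, while the paper uses a purely topological argument (if $\partial\phi(\mathbb{D})$ were a proper closed subset of the Jordan curve, $\mathbb{C}\setminus\partial\phi(\mathbb{D})$ would be connected, forcing the bounded open set $\phi(\mathbb{D})$ to be the whole unbounded complement); both are valid. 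One small slip: $h$ need not lie in $H^{\infty}(\overline{\mathbb{D}})$ as you assert, since Carath\'eodory gives only a continuous (not holomorphic) extension across $\mathbb{T}$; but the Cowen--Thomson order $b(\phi)$ only requires the outer factor to lie in $H^{\infty}(\mathbb{D})$, so the inequality $\mathrm{order}\,B\le b(\phi)$ is unaffected.
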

\begin{proof}Write $\Gamma=\phi(\mathbb{T})$, the image of $\mathbb{T}$ under $\phi$.
Then $\Gamma$ is a Jordan curve. We will prove that $\Gamma=
\partial \phi(\mathbb{D})$. For this, note that $\partial
\phi(\mathbb{D}) \subseteq \Gamma.$ Assume conversely that
 $\partial \phi(\mathbb{D}) \neq \Gamma.$ Since  $\Gamma$ is a Jordan curve,
  $\mathbb{C}\setminus \partial \phi(\mathbb{D})$ is connected. A  fact from topology states that
  a domain $\Omega$ in $\mathbb{C}$ is a component of $\mathbb{C}\setminus\partial \Omega.$
  Letting $\Omega=\phi( \mathbb{D}),$ we have $\Omega=\mathbb{C}\setminus \partial \phi(\mathbb{D})$.
  However, this can not happen since   $\mathbb{C}\setminus \partial \phi(\mathbb{D})$ is not bounded.
  Therefore, $\Gamma= \partial \phi(\mathbb{D})$.

The Jordan curve $\Gamma$ divides the complex plane $\mathbb{C}$ to
an interior region and an exterior region. By $\Gamma= \partial
\phi(\mathbb{D})$, we know that $\phi$ is a proper map and $
\phi(\mathbb{D})$ is the interior region of $\Gamma $, a simply
connected domain. Let $h$ be a conformal map from $\mathbb{D}$ onto
$ \phi(\mathbb{D})$, and write $\psi=h^{-1}(\phi).$ Then $\psi$
is a holomorphic proper map from $\mathbb{D}$ to $\mathbb{D}$. Hence
$\psi$ is a finite Blaschke product \cite[Theorem 7.3.3]{Ru1}. Also,
we have  $ \phi= h ( \psi) $   as desired.

By Caratheodory's Theorem, a conformal map from $\mathbb{D}$ onto a
Jordan domain $\Omega$ extends to a continuous bijection from
$\overline{\mathbb{D}}$ onto $\overline{\Omega}$. Thus
  $h$ is bijective on $\mathbb{T}$. Rewrite $\psi=B.$ By $ \phi=h(B)$,
 we get $o(\phi)=o(B)=\mathrm{order}\, B,$    $n(\phi)=n(B)$, and  $N(\phi)=N(B).$
 Since $B:\mathbb{T}\to \mathbb{T}$ is a covering map,
 $$o(B)=n(B)=  N(B) = \mathrm{order}\, B,$$
 forcing $o(\phi)=n(\phi) =N(\phi)= \mathrm{order}\, B.$
 Since  $  \mathrm{order}\,B \leq b(\phi)\leq o(\phi),$  we have
   $o(\phi)=n(\phi) =b(\phi)=N(\phi)= \mathrm{order}\,B.$ The proof is finished.
\end{proof}
\begin{rem} The last theorem in \cite{Cow2} says that  if $\phi: \mathbb{D}\to \phi(
\mathbb{D})$ is   an $n$-to-1 analytic map,
 then there is a finite Blaschke product $ B$ and
 a univalent function $h$ so that $\phi=h(B)$.
 Using this one can prove  the former part of Proposition \ref{25}.
   \end{rem}

\section{Toeplitz operators with   meromorphic  symbols}
~~~~This section focuses on the class $\mathfrak{M}(\overline{\mathbb{D}})$, consisting of
  all meromorphic  functions on $\mathbb{C}$ whose poles are outside   $\overline{\mathbb{D}}$. In this interesting case,
we give  the proof of Theorem    \ref{ratgroup}.
\subsection{Some preparations}
~~~~Before going on, let us introduce some notions and lemmas.

 We begin with the notion of analytic continuation
\cite[Chapter 16]{Ru2}. A function element is an ordered pair
$(f,D)$, where $D$ is an open disk and $f$ is a holomorphic function
on $D$. Two  function elements $(f_0,D_0)$  and $(f_1,D_1)$ are
called direct continuations if $D_0\cap D_1$ is not empty and $f_0
=f_1 $ holds on $D_0\cap D_1$. By a  curve,
 we mean a continuous map from $[0,1]$ into  $ \mathbb{C}$.
   Given a function element $(f_0,D_0)$ and  a curve    $\gamma$   with $\gamma(0)\in D_0$,
if there is  a partition of  $[0,1]$:
     $$0=s_0<s_1<\cdots <s_n=1$$
 and function elements $(f_j,D_j)(0\leq j \leq n)$ such that
   \begin{itemize}
  \item [1.]  $(f_j,D_j)$  and $(f_{j+1},D_{j+1})$
are direct continuations for all $j$ with\linebreak $ 0\leq j\leq
n-1 $;
  \item[2.] $\gamma [s_j,s_{j+1}]\subseteq D_j(0\leq j\leq n-1)$ and $\gamma(1)\in D_n$,
       \end{itemize}then
  $(f_n, D_n)$ is called \emph{an analytic continuation of $(f_0, D_0)$ along
     $\gamma$}.

\vskip2mm Suppose  $\Omega$ is a domain satisfying $D_0\cap
\Omega\neq \emptyset.$ A function element $(f_0, D_0)$ is called to
\emph{admit unrestricted continuation in}  $\Omega$ if for any curve
$\gamma$ in $\Omega$ such that
 $\gamma(0)\in D_0$,  $(f_0, D_0)$ admits an analytic
continuation along $\gamma$.
 Furthermore, analytic continuation along a curve is essentially unique; that is, if
 $(g ,U)$ is another  analytic continuation of $(f_0, D_0)$ along
     $\gamma$,  then on \linebreak
     $U\cap D_n$ we have  $f_n=g.$ We denote by $f_0(\gamma,s)$  the value of analytic continuation of $f_0$
      along $\gamma$
      at the endpoint  $\gamma(s)$ of
     $\gamma_s: t\mapsto \gamma(st), $   $0\leq t \leq 1.  $ In particular,
     $f_0(\gamma,1)=f_n(\gamma(1)).$

  \vskip1mm   For example, let $D=\{z\in \mathbb{C}: |z-1|<1\}$ and define
     $$f(z)=\ln z, z\in D$$
     with $\ln 1=0.$ Let $\gamma(t)=\exp (2 t\pi i).$ Then $(f,D) $ admits analytic continuation along $\gamma.$
     We have $f(\gamma,0)=f(1)=0,$ and in general
     $$ f(\gamma, t)=2 t\pi i, \, 0\leq t \leq 1.$$
Note that $f(\gamma, 1)=2\pi i\neq f(\gamma,0)$, but $\gamma(1)=
\gamma(0)$.
\vskip2mm

For a holomorphic function $f$ on a domain $V$, if there is a
subdomain $V$ of $U$ and a holomorphic function $\rho:V\to U$ such
that
$$f(z)=f(\rho(z)), z\in V,$$
then $\rho$ is called a local inverse of $f$ on $V$. The analytic
continuation of a local inverse of $f$ is also a local inverse.
\vskip2mm

Some notations will be frequently used. For each
$z\in\mathbb{C}\setminus \{0\}$, define
$$z^*= 1 / \overline{z}.$$
Let   $A$  be a subset of the complex plane, and define $A^* =
\{z^*:z\in A\setminus \{0\}\}.$ For a meromorphic function $f$
on domain $\Omega$,
 define $f^*$ by
 $$f^*(z)= (f(z^*))^*, \, z\in  \Omega^*.$$
 Note that $f^*$ is a meromorphic function if $f \not\equiv 0 $.

 In the sequel, we need the following lemma.
\begin{lem} Suppose that $f$ is a holomorphic function on a convex  domain $\Omega$ and
$x_0\in\Omega \cap \mathbb{R} $.            \label{real}
 If there exists a sequence $\{x_k\}$ in $\mathbb{R}\setminus \{x_0\}$  such
  that $x_k\to x_0(k\to\infty)$, and $f(x_k)\in\mathbb{R}$, then $f(\Omega\cap\mathbb{R})\subseteq\mathbb{R}$.
\end{lem}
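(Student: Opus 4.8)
The plan is to show that the hypothesis forces $f$ to agree with a function that manifestly sends real points to real points, and then invoke the identity theorem. First I would observe that since $\{x_k\}$ accumulates at $x_0\in\Omega$, the set of zeros of the holomorphic function $g(z):=f(z)-\overline{f(\bar z)}$ cannot be isolated at $x_0$ — provided $g$ is actually holomorphic on a neighborhood of $x_0$. To make sense of $g$, note that $\Omega$ is convex and $x_0\in\mathbb{R}$, so $\Omega$ is symmetric about $\mathbb{R}$ in a neighborhood of $x_0$ (indeed one can pass to the symmetric convex subdomain $\Omega\cap\Omega^{\mathrm{conj}}$, where $\Omega^{\mathrm{conj}}=\{\bar z: z\in\Omega\}$; this is again a convex domain containing $x_0$ and all the $x_k$). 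On this symmetric domain define $f^{\sharp}(z)=\overline{f(\bar z)}$, which is holomorphic there, and set $g=f-f^{\sharp}$.

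The key step is then: $g$ is holomorphic on a domain containing the sequence $x_k\to x_0$, and $g(x_k)=f(x_k)-\overline{f(x_k)}=0$ because $f(x_k)\in\mathbb{R}$. Since the zero set of a nonzero holomorphic function on a domain is discrete, and $x_k\to x_0$ is a non-isolated accumulation of zeros in the domain, we conclude $g\equiv 0$ on that symmetric domain; that is, $f(z)=\overline{f(\bar z)}$ identically there. By the identity theorem $f(z)=\overline{f(\bar z)}$ holds on all of $\Omega$ (both sides being holomorphic on the connected set $\Omega$ and agreeing on an open subset, or a set with a limit point). Restricting to $z=x\in\Omega\cap\mathbb{R}$ gives $f(x)=\overline{f(x)}$, i.e. $f(x)\in\mathbb{R}$, which is exactly $f(\Omega\cap\mathbb{R})\subseteq\mathbb{R}$.

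The only genuine obstacle is bookkeeping about the domain: one must make sure there really is a symmetric (conjugation-invariant) subdomain of $\Omega$ that contains $x_0$ together with a tail of the sequence $\{x_k\}$, so that $f^{\sharp}$ and hence $g$ is defined there — convexity of $\Omega$ together with $x_0\in\mathbb{R}\cap\Omega$ guarantees this, since $\Omega\cap\Omega^{\mathrm{conj}}$ is an intersection of two convex sets, hence convex, and is open and nonempty (it contains a real neighborhood of $x_0$), hence a domain. After that the argument is the standard reflection/identity-theorem routine and no further calculation is needed. Finally, the conclusion $f(z)=\overline{f(\bar z)}$ on $\Omega$ also records the stronger fact that $f$ is ``real on the real axis'' in the Schwarz-reflection sense, which is the form in which the lemma will later be applied.
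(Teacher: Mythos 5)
Your proof is essentially the paper's: both introduce the conjugation-symmetric subdomain $U=\Omega\cap\{\bar z:z\in\Omega\}$, define $g(z)=f(z)-\overline{f(\bar z)}$ on it, observe $g(x_k)=0$ so $g\equiv 0$ by the identity theorem, and conclude $f$ is real on $\Omega\cap\mathbb{R}$ (which lies in $U$). One small overreach: your final remark that $f(z)=\overline{f(\bar z)}$ then holds on \emph{all} of $\Omega$ doesn't make sense as stated, since $\overline{f(\bar z)}$ is only defined on $U$; but that claim is superfluous, because $\Omega\cap\mathbb{R}\subseteq U$ already gives the lemma.
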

\begin{proof}Write  $U=\Omega\cap\{\overline{z}:z\in\Omega\}$, which is itself a domain.
Then define $g(z)= f(z)-\overline{f( \overline{z} )}$
   on $U$.
 For each $k$, we have $$g(x_k)=f(x_k)-\overline{f(\overline{x_k})}
  =f(x_k)-\overline{(f(x_k))}=0.$$
 Since $x_0\in U$ and $x_0$ is the accumulation point of $\{x_k\}$,   $g\equiv0$.
In particular,  for each $x$ in $\Omega\cap\mathbb{R}$,
   $f(x)=\overline{f(\overline{x})}=\overline{f(x)}$. That is, $f(x)\in\mathbb{R}$ to finish the proof.
\end{proof}

Since there is a Moebius map mapping  the real line to the unit
circle,   we get a translation of the Lemma \ref{real}.
\begin{cor}
Assume $f$ is holomorphic in $O(\zeta_0,\delta)$ where
$\zeta_0\in\mathbb{T}$ and $\delta>0$. Suppose that there exists a
sequence $\{\zeta_k\}$ in $\mathbb{T}\setminus \{\zeta_0\}$ such
that $\zeta_k\to\zeta_0(k\to\infty)$  and $f(\zeta_k)\in\mathbb{T}$.
Then \label{arcarc}
$f(O(\zeta_0,\delta)\cap\mathbb{T})\subseteq\mathbb{T}$.
\end{cor}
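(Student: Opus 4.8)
The plan is to reduce Corollary~\ref{arcarc} to Lemma~\ref{real} by a conformal change of coordinates that carries a neighborhood of $\zeta_0$ on $\mathbb{T}$ to a neighborhood of a real point on $\mathbb{R}$. Concretely, pick a Moebius transformation $m$ mapping the real line $\mathbb{R}\cup\{\infty\}$ onto the unit circle $\mathbb{T}$; for instance the Cayley-type map $m(w)=\frac{w-i}{w+i}$ sends the upper half-plane to $\mathbb{D}$ and $\mathbb{R}$ onto $\mathbb{T}\setminus\{1\}$. After composing with a rotation of $\mathbb{T}$ we may assume $\zeta_0=m(x_0)$ for some $x_0\in\mathbb{R}$, and then $m^{-1}$ is a holomorphic bijection from a disk $O(\zeta_0,\delta')$ (shrink $\delta$ if needed so this disk avoids the point $m$ sends to $\infty$) onto an open set in $\mathbb{C}$ containing $x_0$; by further shrinking we arrange its image to contain a convex neighborhood $\Omega$ of $x_0$ on which things are holomorphic.

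First I would set $F = m^{-1}\circ f\circ m$ on a suitable convex domain $\Omega\ni x_0$ contained in $m^{-1}(O(\zeta_0,\delta))$; this is holomorphic there because $f$ is holomorphic on $O(\zeta_0,\delta)$ and $m$, $m^{-1}$ are holomorphic on the relevant open sets. Next, set $x_k = m^{-1}(\zeta_k)$. Since $\zeta_k\to\zeta_0$ and $m^{-1}$ is continuous at $\zeta_0$, we have $x_k\to x_0$; since $m^{-1}$ is injective and $\zeta_k\neq\zeta_0$, we have $x_k\neq x_0$; and since $\zeta_k\in\mathbb{T}$ with $m$ mapping $\mathbb{R}$ onto $\mathbb{T}$ (minus one point, which we have excluded), $x_k\in\mathbb{R}$. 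Moreover $f(\zeta_k)\in\mathbb{T}$ gives $F(x_k)=m^{-1}(f(m(x_k)))=m^{-1}(f(\zeta_k))\in m^{-1}(\mathbb{T})=\mathbb{R}$. Thus $F$, $x_0$ and $\{x_k\}$ satisfy the hypotheses of Lemma~\ref{real}, so $F(\Omega\cap\mathbb{R})\subseteq\mathbb{R}$. Translating back, for $\zeta\in O(\zeta_0,\delta)\cap\mathbb{T}$ with $m^{-1}(\zeta)\in\Omega$ we get $f(\zeta)=m(F(m^{-1}(\zeta)))\in m(\mathbb{R})\subseteq\mathbb{T}$. Finally I would note that by choosing $\delta$ small enough at the outset the whole arc $O(\zeta_0,\delta)\cap\mathbb{T}$ lies in $m(\Omega\cap\mathbb{R})$, giving the full conclusion $f(O(\zeta_0,\delta)\cap\mathbb{T})\subseteq\mathbb{T}$.

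The only genuinely delicate point is bookkeeping with the excluded point: the chosen $m$ omits one point of $\mathbb{T}$ (the image of $\infty$), so one must make sure, by shrinking $\delta$, that $O(\zeta_0,\delta)$ stays away from that point and that $m^{-1}$ is a genuine holomorphic diffeomorphism on the disk in question onto a set containing a convex real neighborhood of $x_0$; all of this is automatic once $\delta$ is small, but it should be stated. Everything else is a routine transport of hypotheses along the conformal map, so I do not expect any real obstacle beyond this. In fact, since the excerpt already remarks ``Since there is a Moebius map mapping the real line to the unit circle, we get a translation of the Lemma~\ref{real}'', the proof can be kept to a sentence or two recording this reduction.
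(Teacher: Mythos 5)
Your reduction is exactly what the paper has in mind; the paper's proof is the single sentence ``Since there is a Moebius map mapping the real line to the unit circle, we get a translation of the Lemma~\ref{real}'', and you are simply spelling that reduction out. So in spirit the two proofs coincide.

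There is, however, one small gap in your bookkeeping. You take care, by rotating and shrinking $\delta$, that the \emph{domain} side avoids the excluded point of $m$, so that $m^{-1}$ is holomorphic on $O(\zeta_0,\delta)$ and $\zeta_0=m(x_0)$. But the same issue arises on the \emph{codomain} side, and you do not address it: you use the same $m$ to pull the values of $f$ back to $\mathbb{R}$. If $f(\zeta_0)$ happens to coincide with the excluded point $m(\infty)$, then $m^{-1}\circ f$ has a pole at $\zeta_0$ (unless $f$ is constant), so $F=m^{-1}\circ f\circ m$ is not holomorphic on any neighborhood of $x_0$, and Lemma~\ref{real} cannot be applied to it. Your line ``$F(x_k)=m^{-1}(f(\zeta_k))\in m^{-1}(\mathbb{T})=\mathbb{R}$'' also silently presupposes $f(\zeta_k)\ne m(\infty)$. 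The repair is easy and should be recorded: either pick the rotation so that \emph{both} $\zeta_0$ and $f(\zeta_0)$ avoid the excluded point (only two values need to be dodged, so such a rotation exists), or use two possibly different Moebius maps $m_1,m_2:\mathbb{R}\cup\{\infty\}\to\mathbb{T}$, one adapted to $\zeta_0$ and one to $f(\zeta_0)$, and set $F=m_2^{-1}\circ f\circ m_1$. With that adjustment the rest of your argument, including the final passage from a small subarc to the full arc $O(\zeta_0,\delta)\cap\mathbb{T}$ (which itself uses the identity theorem or a further shrinking of $\delta$), goes through as you wrote it.
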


  An observation is in order.  Let $f $ be a nonconstant
function
 holomorphic at $a$. By complex analysis, there is a neighborhood
 $W$ and a holomorphic function $\psi$ on $W$ such that
 $f (z)-f (a)= (z-a)^n \psi(z),\, z\in W$
 and $\psi(a)\neq 0.$ For enough small $W,$  $g(z)=(z-a)\sqrt[n]{\psi(z)}$ is univalent on  $W$,
 and we have
$$f (z)-f (a)=g(z)^n.$$ Furthermore, we can require
$W$ to be a Jordan domain such that $g(W)$ is a disk centered at
$0$. Therefore, we immediately get the following.
\begin{lem}\label{ext} Suppose  $f $ is a   nonconstant holomorphic function over a domain containing    both $a$
and $b$, and $f (a)= f (b).$  Then for each enough small  number
$\varepsilon>0$, there are two Jordan neighborhoods $W_1$ and $W_2$
of $a$ and $b$,  such that both
 $f |_{W_1}$ and $f |_{W_2}$ are proper maps onto $f (a)+\varepsilon \mathbb{D}.$

 Furthermore, in this case for each pair $(z,w)$  satisfying $f (z)= f (w)$, $z\in W_1\setminus\!\!\{a\}$, and
 $w\in W_2\setminus\!\! \{b\}$, there is a local inverse $\rho$ of $ f $ such that
 $\rho(z)=w$ and $\rho$ admits analytic continuation along any curve   in $W_1\setminus\!\! \{a\}$,
 with values in  $ W_2\setminus\!\! \{b\}$.
\end{lem}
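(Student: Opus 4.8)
The plan is to run the local normal-form argument for $f$ separately at $a$ and at $b$, and then glue the two pieces via the common value $f(a)=f(b)$. First I would invoke the observation preceding the lemma at the point $a$: since $f$ is nonconstant and holomorphic near $a$, there is a Jordan neighborhood $W_1^{(0)}$ of $a$ and a univalent $g_1$ on $W_1^{(0)}$ with $g_1(a)=0$ and $f(z)-f(a)=g_1(z)^{m}$ on $W_1^{(0)}$, where $m$ is the local multiplicity of $f$ at $a$; moreover one may arrange that $g_1(W_1^{(0)})$ is a disk centered at $0$, say of radius $r_1$. Likewise there is a Jordan neighborhood $W_2^{(0)}$ of $b$ and a univalent $g_2$ on $W_2^{(0)}$ with $g_2(b)=0$, $f(w)-f(b)=g_2(w)^{n}$, and $g_2(W_2^{(0)})$ a disk of radius $r_2$ centered at $0$. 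Now fix $\varepsilon>0$ small enough that $\varepsilon<\min\{r_1^{m},r_2^{n}\}$, and shrink: let $W_1=g_1^{-1}(\varepsilon^{1/m}\mathbb{D})$ and $W_2=g_2^{-1}(\varepsilon^{1/n}\mathbb{D})$. Then $f|_{W_1}=f(a)+g_1(\cdot)^m$ is the composition of the conformal map $g_1:W_1\to \varepsilon^{1/m}\mathbb{D}$ with the proper map $\zeta\mapsto \zeta^m$ of $\varepsilon^{1/m}\mathbb{D}$ onto $\varepsilon\mathbb{D}$, hence is a proper map of $W_1$ onto $f(a)+\varepsilon\mathbb{D}$; using $f(a)=f(b)$, the same reasoning gives that $f|_{W_2}$ is a proper map of $W_2$ onto $f(b)+\varepsilon\mathbb{D}=f(a)+\varepsilon\mathbb{D}$. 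This establishes the first assertion.

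For the second assertion, fix a pair $(z,w)$ with $f(z)=f(w)$, $z\in W_1\setminus\{a\}$, $w\in W_2\setminus\{b\}$. Then $g_1(z)\ne0$, $g_2(w)\ne0$, and $g_1(z)^m=g_2(w)^n\,(=f(z)-f(a))$. I would \emph{define} $\rho$ near $z$ by the formula
$$\rho=g_2^{-1}\circ\beta\circ g_1,$$
where $\beta$ is the branch of $\zeta\mapsto \zeta^{m/n}$ on a small disk around $g_1(z)$ chosen so that $\beta(g_1(z))=g_2(w)$ (such a branch exists and is holomorphic and nonvanishing since $g_1(z)\ne0$, and it is uniquely determined by the normalization because $(g_2(w))^n=(g_1(z))^m$). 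Then $\rho$ is holomorphic near $z$, $\rho(z)=w$, and
$$f(\rho(z'))-f(a)=g_2(\rho(z'))^n=\beta(g_1(z'))^n=\bigl(g_1(z')^{m/n}\bigr)^n=g_1(z')^m=f(z')-f(a),$$
so $f\circ\rho=f$ near $z$; i.e.\ $\rho$ is a local inverse of $f$. Finally, for the unrestricted-continuation claim: along any curve $\gamma$ in $W_1\setminus\{a\}$, the curve $g_1\circ\gamma$ avoids $0$ inside $\varepsilon^{1/m}\mathbb{D}$, so the power function $\zeta\mapsto\zeta^{m/n}$ continues analytically along $g_1\circ\gamma$ (it is locally invertible off $0$, and the standard fact that $z\mapsto z^{p/q}$ admits unrestricted continuation in any domain omitting $0$ applies); composing with the fixed conformal maps $g_1$ and $g_2^{-1}$, the local inverse $\rho$ continues analytically along all of $\gamma$, and since $\beta(g_1\circ\gamma)$ stays in $\varepsilon^{1/n}\mathbb{D}\setminus\{0\}$ the continued values $g_2^{-1}(\beta(g_1(\gamma(t))))$ lie in $W_2\setminus\{b\}$ throughout. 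This is exactly the asserted conclusion.

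The only genuine subtlety — and what I would treat as the main obstacle — is the bookkeeping around the fractional power $\zeta\mapsto\zeta^{m/n}$: one must be careful that $m$ and $n$ need not be equal, that the exponent is rational rather than an integer, and that the \emph{choice of branch} (pinned down by $\beta(g_1(z))=g_2(w)$) is both possible and consistent, using the compatibility relation $g_1(z)^m=g_2(w)^n$. Once this is set up cleanly, everything else is the routine transport of analytic continuation through the biholomorphisms $g_1$ and $g_2$, together with the elementary fact that $z\mapsto z^{m/n}$ admits unrestricted analytic continuation in $\mathbb{C}\setminus\{0\}$ (equivalently, in the punctured disk $\varepsilon^{1/m}\mathbb{D}\setminus\{0\}$). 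I would also remark that the hypothesis that $f$ is holomorphic on a \emph{single} domain containing both $a$ and $b$ is used only to know $f$ is holomorphic near each of the two points and takes the same value there; the construction is entirely local near $a$ and near $b$.
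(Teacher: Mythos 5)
Your proof is correct and is essentially the approach the paper has in mind: the paper derives Lemma \ref{ext} ``immediately'' from the local normal form $f(z)-f(a)=g(z)^n$ with $g$ univalent and $g(W)$ a disk centered at $0$, and you have simply carried out the elaboration — applying the normal form at both $a$ and $b$, taking $W_1=g_1^{-1}(\varepsilon^{1/m}\mathbb{D})$, $W_2=g_2^{-1}(\varepsilon^{1/n}\mathbb{D})$, and realizing the local inverse as $\rho=g_2^{-1}\circ\beta\circ g_1$ for a branch $\beta$ solving $\beta(\zeta)^n=\zeta^m$, which continues unrestrictedly on the punctured disk. The only stylistic caveat is that writing ``the branch of $\zeta\mapsto\zeta^{m/n}$'' is a mild abuse (what you need is a local holomorphic solution of $\beta^n=\zeta^m$, of which there are $n$, pinned down by $\beta(g_1(z))=g_2(w)$), but you say exactly this in the parenthetical, so the argument is sound.
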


\subsection{Proof of Theorem  \ref{ratgroup}}
 ~~~~ A problem raised in \cite{BDU} and \cite{T1} asks whether
    each nonconstant function in $ H^{\infty}
(\overline{\mathbb{D}}) $ has MWN Property. Under a mild condition
this is answered by    the following result (=Theorem
\ref{ratgroup}).
  \begin{thm} Suppose $\phi$ is a nonconstant function in      \label{ratgroup2}
$\mathfrak{M}(\overline{\mathbb{D}})$. Then $$ n(\phi)=b(\phi)=o(\phi)=N(\phi).$$ 
\end{thm}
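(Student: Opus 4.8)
The plan is to establish the chain of equalities by proving inequalities that, combined with the divisibility relations already available, force everything to collapse. From Section 2 we already know, for any nonconstant $\phi\in H^\infty(\overline{\mathbb{D}})$, that $b(\phi)\le o(\phi)$, $o(\phi)\mid n(\phi)$ and $o(\phi)\mid N(\phi)$; in particular $o(\phi)\le n(\phi)$ and $o(\phi)\le N(\phi)$. It therefore suffices to produce the reverse direction, and the cleanest route is to show $n(\phi)\le o(\phi)$ and $N(\phi)\le o(\phi)$, or better, to show directly that both $n(\phi)$ and $N(\phi)$ divide $o(\phi)$, which together with the divisibilities above pins all four quantities to a common value. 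Since the reduction $\phi=\psi(B)$ of Cowen--Thomson gives $b(\phi)=\operatorname{order}B=o(B)$ and $o(B)\mid o(\phi)$, the real content is to show that for meromorphic $\phi$ the full symmetry group $G(\phi)$ is already ``seen'' by the Blaschke product, i.e.\ $o(\phi)=b(\phi)$, and that $n(\phi)$ and $N(\phi)$ cannot exceed it.

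The key mechanism is analytic continuation of local inverses, which is why Lemma~\ref{ext} and Lemma~\ref{real}/Corollary~\ref{arcarc} are set up. The strategy is: pick a point $\xi_0\in\mathbb{T}$ and let $\xi_0,\dots,\xi_{n_0-1}$ be the preimages of $\phi(\xi_0)$ on $\mathbb{T}$, where $n_0=N(\phi-\phi(\xi_0),\mathbb{T})$. For a generic choice of $\xi_0$ we will have $n_0=N(\phi)$. Using Lemma~\ref{ext}, each pair $(\xi_0,\xi_j)$ with $\phi(\xi_0)=\phi(\xi_j)$ gives a local inverse $\rho_j$ of $\phi$ defined near $\xi_0$ with $\rho_j(\xi_0)=\xi_j$; the point of the meromorphic hypothesis is that $\phi$ is globally defined and holomorphic on a neighborhood of $\overline{\mathbb{D}}$, so $\rho_j$ admits unrestricted analytic continuation throughout $\overline{\mathbb{D}}$ minus a finite critical set, and by Corollary~\ref{arcarc} (applied along $\mathbb{T}$, where $\rho_j$ maps a sequence of unimodular points to unimodular points because $\phi$ identifies them) each $\rho_j$ extends to a continuous self-map of $\mathbb{T}$ fixing the relation $\phi(\rho_j)=\phi$ --- that is, $\rho_j\in G(\phi)$. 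This produces $n_0$ distinct elements of $G(\phi)$, giving $N(\phi)=n_0\le o(\phi)$; combined with $o(\phi)\mid N(\phi)$ we get $o(\phi)=N(\phi)$. A parallel (in fact easier) argument handles $n(\phi)$: the same local-inverse continuation, run around a small loop encircling a point $a$ with $\operatorname{wind}(\phi,\phi(a))=n(\phi)$, exhibits the $n(\phi)$ sheets of $\phi$ over $\phi(a)$ as a $G(\phi)$-orbit, so $n(\phi)\le o(\phi)$, hence $o(\phi)=n(\phi)$. Finally $b(\phi)\le o(\phi)$ is known, and $o(\phi)=o(B)=b(\phi)$ follows because $G(\phi)$ acts on the generic fiber freely and transitively onto the fiber of $B$, so the remaining inequality $o(\phi)\le b(\phi)$ comes from the fact that $\phi$ factors through $B$ and $G(B)$ already has order $\operatorname{order}B=b(\phi)$ while every element of $G(\phi)$ descends compatibly to $B$ (two elements of $G(\phi)$ inducing the same map on $\mathbb{T}$ must coincide by Corollary~\ref{uni}). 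Chaining: $b(\phi)\le o(\phi)=n(\phi)=N(\phi)$ and $o(\phi)\le b(\phi)$, so all four are equal.

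The main obstacle I expect is the global analytic-continuation step: showing that a local inverse of $\phi$ defined near a boundary point $\xi_0\in\mathbb{T}$ genuinely continues to a \emph{single-valued} continuous map on all of $\mathbb{T}$ (indeed on a neighborhood of $\overline{\mathbb{D}}$) rather than merely a multi-valued germ. This is where the meromorphic hypothesis is essential --- for a general $\phi\in H^\infty(\overline{\mathbb{D}})$ the continuation can acquire monodromy and one only gets MWN-type statements conditionally --- and the argument must carefully track that the branch points of $\phi^{-1}$ are exactly the images of the finitely many zeros of $\phi'$, that continuation around each such point returns a local inverse (not something escaping the range), and that the use of $f^*$ and the reflection lemmas keeps the continued map mapping $\mathbb{T}$ to $\mathbb{T}$. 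One must also verify that the finitely many bad points $\xi_0$ (where $N(\phi-\phi(\xi_0),\mathbb{T})>N(\phi)$, or where $\xi_0$ is critical) do not obstruct the count, which is handled by the FSI-type genericity already implicit in the setup. The winding-number bookkeeping --- that the $n(\phi)$ preimages of a regular value $\phi(a)$ are permuted transitively by monodromy and that this monodromy group embeds in $G(\phi)$ --- is the technical heart, but it is standard covering-space theory once single-valuedness on $\overline{\mathbb{D}}\setminus(\text{finite set})$ is in hand.
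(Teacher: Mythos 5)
Your high-level skeleton is right, and the divisibility reductions from Section~2 ($b(\phi)\mid o(\phi)$, $o(\phi)\mid n(\phi)$, $o(\phi)\mid N(\phi)$) are exactly the right starting point. But the proposal stops at the threshold of the actual difficulty: you name the global analytic-continuation obstacle and then wave at it rather than overcome it. The claim that ``each $\rho_j$ extends to a continuous self-map of $\mathbb{T}$, i.e.\ $\rho_j\in G(\phi)$'' is the entire theorem, and the tools you cite do not deliver it. Three concrete ingredients are missing. First, the basepoint $\xi_0$ cannot be ``generic'' in your loose sense; it must be chosen with $\phi(\xi_0)\in\partial\phi(\mathbb{D})$, which is what prevents a continued local inverse from drifting into $\mathbb{D}$ --- that boundary condition, combined with the reflection $\rho_i=\rho_i^*$, is the only reason $\widetilde\rho_i(\xi_0)$ lands back on $\mathbb{T}$ after a loop. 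Second, for $\phi\in\mathfrak{M}(\overline{\mathbb{D}})\setminus\mathfrak{R}(\overline{\mathbb{D}})$ the critical/singular set $\widetilde Y$ is infinite (not ``a finite critical set'' as you write), and one must establish that the continuations stay inside a fixed bounded domain $\Omega\supseteq\overline{\mathbb{D}}$; this is the content of the paper's Lemmas~\ref{value} and~\ref{r15}, which occupy most of Section~3.2 and cannot be skipped. Third, even the existence of a suitable $\xi_0$ (one avoiding the set $S$ of points where the fibre count on $\mathbb{T}$ jumps) requires the countability Lemma~\ref{count}; you gesture at ``FSI-type genericity already implicit in the setup'' but that genericity is itself what has to be proved here.

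Beyond the missing single-valuedness argument, two of your specific inequalities are not supported. Your $n(\phi)\le o(\phi)$ step says the $n(\phi)$ sheets of $\phi$ over $\phi(a)$ form a $G(\phi)$-orbit; this cannot be right, since $\operatorname{wind}(\phi,\phi(a))$ counts preimages inside $\mathbb{D}$ while $G(\phi)$ acts on $\mathbb{T}$, and in general $n(\phi)\ne b(\phi)$ so the fibre is not a single orbit of the symmetry group in any extended sense. The paper instead picks $w_0\in\mathbb{T}$ with $\phi(w_0)\in\partial\phi(\mathbb{D})$ and $N(\phi-\phi(w_0),\mathbb{T})=b(\phi)$, then uses Rouch\'e on an annulus to produce a nearby $z_0\in\mathbb{D}$ with $\operatorname{wind}(\phi,\phi(z_0))\le b(\phi)$ --- a purely local counting argument at the boundary, with no reference to $G(\phi)$-orbits. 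Your $o(\phi)\le b(\phi)$ step asserts that every element of $G(\phi)$ descends compatibly to $B$, i.e.\ $G(\phi)\subseteq G(B)$; that is again the conclusion, not an observation. The paper avoids this entirely: it only uses the easy inclusion $G(B)\subseteq G(\phi)$ and bounds $o(\phi)$ from above by $N(\phi-\phi(\xi_0),\mathbb{T})$ via Corollary~\ref{uni} (each $\rho\in G(\phi)$ is determined by $\rho(\xi_0)\in\phi^{-1}(\phi(\xi_0))\cap\mathbb{T}$), then sandwiches. Finally, note that the paper never proves the individual $\rho_i$ are single-valued on $\mathbb{T}$ as a stepping stone; it symmetrizes, defining $B=\prod_i\widetilde\rho_i$, shows this product is a finite Blaschke product, and factors $\phi=h(B)$. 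That symmetrization is the trick that sidesteps monodromy and is absent from your proposal.
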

   The proof of Theorem \ref{ratgroup2} is long and thus it
is divided into several parts. In what follows we will establish
some
   lemmas and corollaries and then prove Theorem \ref{ratgroup2}  at the end of this
   subsection.

\vskip2mm

 In this section, let $\phi$ be a nonconstant
function in $\mathfrak{M}(\overline{\mathbb{D}})$.
  We write $P$ for  the set
of poles of $\phi$ in $\mathbb{C}$, and $Z'$ for the set of zeros of
$\phi'$. Let $X=P\cup \phi^{-1}(\phi(0))\cup \phi^{-1}(\phi(Z'))$,
$Y=X\cup X^*$, and write
$$\widetilde{Y}=\phi^{-1}(\phi(Y)).$$  Note that  $\widetilde{Y}$ is  a countable set containing
$Y$ and $\phi^{-1}(\phi(\widetilde{Y}))=\widetilde{Y}$. Recall that
a planar domain  minus a countable  set is path-connected.
\begin{lem} Suppose that there is a point $\xi$ on  $\mathbb{T}\setminus \widetilde{Y}$ and a local inverse $\rho$
of $\phi$ at $\xi$ such that   $\rho=\rho^*$ on some neighborhood of
$\xi$.
  If $\gamma$ is a curve in $\mathbb{C}\setminus \widetilde{Y}$
such that $\gamma(0)=\xi$, then $\rho$ admits   analytic
continuation along $\gamma$. \label{ac1}
\end{lem}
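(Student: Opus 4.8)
The plan is to argue by connectedness in the parameter interval, keeping track of the ``shadow curve'' traced by the continued branch. After reparametrizing, regard $\gamma$ as a map $[0,1]\to\mathbb{C}\setminus\widetilde Y$ with $\gamma(0)=\xi$, and let $S$ be the set of $s\in[0,1]$ such that $\rho$ admits analytic continuation along $\gamma|_{[0,s]}$. For $s\in S$ write $(\rho_s,D_s)$ for the terminal function element at $\gamma(s)$ and put $\widetilde\gamma(s)=\rho_s(\gamma(s))$. Since the analytic continuation of a local inverse is again a local inverse, $\phi\circ\rho_s=\phi$ on $D_s$, so $\phi(\widetilde\gamma(s))=\phi(\gamma(s))$; as $\gamma(s)\notin\widetilde Y=\phi^{-1}(\phi(Y))$ we have $\phi(\gamma(s))\notin\phi(Y)=\phi(\widetilde Y)$, hence also $\widetilde\gamma(s)\notin\widetilde Y$. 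In particular $\widetilde\gamma(s)\notin Z'\cup P$, so $\phi$ is biholomorphic from a neighborhood of $\widetilde\gamma(s)$ onto a neighborhood of $\phi(\gamma(s))$ and its local inverse there coincides with $\rho_s$ near $\gamma(s)$. This makes $0\in S$ clear and shows $S$ is open in $[0,1]$.

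Next I would prove $S$ is closed. Let $s_0=\sup S$ and suppose $s_0\notin S$; the function elements with $s<s_0$ are mutually consistent (by uniqueness of continuation along curves) and define $\widetilde\gamma$ on $[0,s_0)$. Either $\widetilde\gamma$ has a finite accumulation point $p$ as $s\to s_0^-$, or $\widetilde\gamma(s)\to\infty$. In the first case $\phi(p)=\lim_{s\to s_0^-}\phi(\widetilde\gamma(s))=\phi(\gamma(s_0))$, so $p$ is not a pole of $\phi$, and $p\notin\widetilde Y$ by the argument above; then the local inverse of $\phi$ near $p$ carrying $\phi(\gamma(s_0))$ to $p$ agrees with $\rho_s$ for all $s$ near $s_0$, forcing $\widetilde\gamma(s)\to p$ and producing a continuation at $s_0$, contradicting $s_0\notin S$. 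With openness this gives $S=[0,1]$. So everything reduces to ruling out $\widetilde\gamma(s)\to\infty$.

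For that I would invoke the symmetry $\rho=\rho^*$. Near $\xi$ one has $(\rho(z))^*=\rho^*(z^*)=\rho(z^*)$, so, using $\phi\circ\rho=\phi$,
$$\phi^*(\rho(z))=\big(\phi((\rho(z))^*)\big)^*=\big(\phi(\rho(z^*))\big)^*=\big(\phi(z^*)\big)^*=\phi^*(z);$$
thus $\rho$ is a local inverse of $\phi^*$ as well, and this relation propagates along the continuation, giving $\phi^*(\widetilde\gamma(s))=\phi^*(\gamma(s))$ for $s<s_0$. Since $\phi$ has no pole on $\overline{\mathbb{D}}$ it is holomorphic at $0$, and a short computation with its Taylor series at $0$ shows $\phi^*$ extends to a function meromorphic (hence continuous into $\widehat{\mathbb{C}}$) at $\infty$, with $\phi^*(\infty)$ determined by $\phi(0)$ --- finite and nonzero if $\phi(0)\ne0$, a pole if $\phi(0)=0$. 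Letting $s\to s_0^-$ in $\phi^*(\gamma(s))=\phi^*(\widetilde\gamma(s))$ with $\widetilde\gamma(s)\to\infty$ yields $\phi^*(\gamma(s_0))=\phi^*(\infty)$, which unwinds to $\phi\big((\gamma(s_0))^*\big)=\phi(0)$ (note $\gamma(s_0)\ne0$ since $0\in\phi^{-1}(\phi(0))\subseteq\widetilde Y$). Hence $(\gamma(s_0))^*\in\phi^{-1}(\phi(0))\subseteq X\subseteq Y$, and as $Y=X\cup X^*$ is invariant under $*$, $\gamma(s_0)\in Y\subseteq\widetilde Y$ --- contradicting $\gamma([0,1])\subseteq\mathbb{C}\setminus\widetilde Y$. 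Therefore $S=[0,1]$. I expect this last step to be the crux: for rational $\phi$ the shadow curve is trapped in the compact set $\phi^{-1}(\phi(\gamma([0,1])))$ and escape to infinity is automatic, but for transcendental $\phi\in\mathfrak{M}(\overline{\mathbb{D}})$ (an essential singularity at $\infty$) the shadow curve might a priori run off to infinity along an asymptotic path; it is precisely the identity $\phi^*\circ\widetilde\gamma=\phi^*\circ\gamma$ coming from $\rho=\rho^*$, together with the tameness of $\phi^*$ at $\infty$ ensured by the absence of poles on $\overline{\mathbb{D}}$ and the presence of $\phi^{-1}(\phi(0))$ in $Y$, that forbids it.
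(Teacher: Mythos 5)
Your proof is correct and follows essentially the same strategy as the paper's: you prevent the shadow curve from escaping to infinity by exploiting the symmetry $\rho=\rho^*$ together with the holomorphy of $\phi$ at $0$ (the paper phrases this via the continuation of $\rho^*$ along $\gamma^*$, you via the identity $\phi^*\circ\rho=\phi^*$, which are dual to each other), and in the bounded case you continue past $s_0$ using that the shadow curve avoids $Z'\cup P$, exactly as the paper does with its neighborhoods $U_j$. The only differences are organizational (open-closed in $[0,1]$ versus the supremum-of-continuable-parameters argument), not substantive.
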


\begin{proof}
To reach a contradiction, assume  that $\rho$ admits no analytic
continuation along $\gamma$.  Write $\gamma_s(t)=\gamma(st),\, t\in
[0,1]$ and put
$$s_0=\sup\,\{s\in[0,1]:\rho \ \mathrm{admits\ an\
analytic\  continuation\  along}\  \gamma_s\}.$$ Then it is clear
that $\rho$   admits no analytic continuation along $\gamma_{s_0}$;
otherwise there is some $s_1>s_0$ such that $\rho$   admits an
analytic continuation along $\gamma_{s_1}$ to derive a
contradiction. Recall that $\rho(\gamma, s)$ denotes the value of
the analytic continuation of $\rho$ at the endpoint $\gamma(s)$ of
  $\gamma_s$, where $0\leq s<s_0.$

  One will show that  $\{\rho(\gamma,s):s\in[0,s_0)\}$ is bounded.  Note that $ \rho(\gamma,s)$ is
  continuous in $s$. If  $\{\rho(\gamma,s):s\in[0,s_0)\}$ is not bounded, then these exists a sequence $\{s_n\}\subseteq[0,s_0)$ such that $\{s_n\}$
  tends to $s_0$, and \begin{equation}\lim_{n\rightarrow\infty}
  \rho (\gamma,s_n )=\infty.  \label{rat1}\end{equation}Since $\gamma \cap \widetilde{Y}=\emptyset, $  $\gamma$ has no intersection
  with $\phi^{-1}(\phi(0))$, and then the local inverse $\rho^*$ of $\phi$ admits an analytic continuation
  along $\gamma_s^*$, where
  $$\gamma_s^*(t)=(\gamma_s(t))^*,\, t\in[0,1]. $$
Then by (\ref{rat1})$$\lim_{n\rightarrow\infty} \rho^*
(\gamma^*,s_n)=0,$$ forcing
$$\lim_{n\rightarrow\infty}\phi
(\gamma^*(s_n))=\phi(0).$$ That is,
 $\phi(\gamma(s_0)^*)=\phi(0)$, and hence  $ \gamma(s_0)^* \in  \phi^{-1}(\phi(0))$.
      But   $\gamma$ has no intersection
  with the set $\phi^{-1}(\phi(0))^*$, which is a  contradiction. 
  Therefore  $\{\rho(\gamma,s):s\in[0,s_0)\}$ is bounded by a positive number $C$.
\vskip2mm
    Let $\{z_i\}_{i=1}^m$ be all the zeros of $\phi-\phi(\gamma(s_0))$ in $C\overline{\mathbb{D}}$.
    Since   $\gamma$ has no  intersection with $\phi^{-1}(\phi(Z'))$,
    we have that $\phi'(\gamma(s_0))\neq 0$ and $$\phi'(z_i)\neq 0, \,\, i=1,\ldots,m.$$   Then
one can find  a connected neighborhood $U$ of $\gamma(s_0)$  and
disjoint connected neighborhoods $U_i(i=1,\ldots,m)$ of  $z_i$  such
that $\phi|_U$ and $\phi|_{U_i}$ are  univalent. Since
$\phi(z_i)=\phi(\gamma(s_0))$ for $1\leq i\leq m,$ using Lemma
\ref{ext} and  contracting $U$ and $U_i$
 we have
that $$\phi(U_i)=\phi(U)=O(\phi(\gamma(s_0)),\varepsilon),\, 1\leq
i\leq m,$$for some $\varepsilon>0,$ and that
\begin{equation}\phi^{-1}\big(O(\phi(\gamma(s_0)),\varepsilon)\big)\cap C\overline{\mathbb{D}}
       \subseteq\bigsqcup_{i=1}^m U_i.  \label{rat2}\end{equation}
      By continuity of $\phi$, there exists   a positive number $\delta<s_0$
       such that $$\phi(\gamma[s_0-\delta,s_0])\subseteq
       O(\phi(\gamma(s_0)),\varepsilon).$$
      By (\ref{rat2})  $\{\rho(\gamma,s):s\in(s_0-\delta,s_0)\}$
       is a  connected set in $\bigsqcup_{i=1}^m U_i$, and thus  it is  contained in
       a single $U_j$ for some $1\leq j\leq m$. Letting   $$\tau= (\phi|_{U_j})^{-1}\circ (\phi|_{U}),$$
       we have that $\tau$ is a local
       inverse of $\phi$
       such that $\tau(\gamma(s_0))=z_j $ and $\tau(U )=U_j$.

       For each $s\in(s_0-\delta,s_0)$, let $\rho_s$ be the analytic
        continuation for $\rho$ along $\gamma_{s}$, and then $\rho_s $ is a direct continuation of
        $\tau$. Then by combining $\rho_s$ with $\tau$, we have that  $\rho$  admits analytic
        continuation along $\gamma_{s_0}$ to derive a   contradiction.
         The proof is   complete.
\end{proof}

\begin{lem}  \label{value}Suppose  $\phi\in \mathfrak{M}(\overline{\mathbb{D}})$  is not a rational function.
 Then for each positive number $C$, there exist two points $a$ and $a'$ in $\mathbb{C}$  such
 that
 $|\phi(a)|>C$, $|\phi(a')|<\frac 1C$ and $\min \{|a|,|a'|\} >C$.
\end{lem}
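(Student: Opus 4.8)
The plan is to exploit the fact that a non-rational function in $\mathfrak{M}(\overline{\mathbb{D}})$ must be a \emph{transcendental} meromorphic function on $\mathbb{C}$, and such a function is as ``wild'' as possible near infinity in the sense of the Casorati--Weierstrass / Picard phenomenon. Concretely, $\phi$ has infinitely many poles (if it had only finitely many, then multiplying by a polynomial vanishing at those poles to the appropriate orders would turn $\phi$ into an entire function with no poles, and a non-rational entire function is still transcendental; but actually the cleaner split is: either $\phi$ has infinitely many poles, or $\phi$ has finitely many poles and is therefore $P/Q$ with $P$ transcendental entire, $Q$ a polynomial). I would treat these two cases.

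\textbf{Case 1: $\phi$ has infinitely many poles.} Since the poles are a discrete subset of $\mathbb{C}$ with no accumulation point in $\mathbb{C}$, and there are infinitely many of them, they accumulate only at $\infty$; hence for any $C>0$ there is a pole $p$ with $|p|>C+1$. Near $p$, $\phi$ is unbounded, so there is a point $a$ with $|a-p|$ as small as we like (in particular $|a|>C$) and $|\phi(a)|>C$. For the point $a'$ with $|\phi(a')|<1/C$ and $|a'|>C$: if $\phi$ also has infinitely many zeros, the same argument applied to a zero of large modulus gives $a'$. If $\phi$ has only finitely many zeros, then $1/\phi$ is meromorphic on $\mathbb{C}$ with finitely many poles (the zeros of $\phi$) and infinitely many poles of $\phi$ become zeros of $1/\phi$ — wait, more directly: $1/\phi$ then has infinitely many zeros (at the poles of $\phi$) but that does not immediately give a large zero of $\phi$. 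Instead I would argue: consider $g = 1/\phi$, which is meromorphic on $\mathbb{C}$; its poles are the zeros of $\phi$, finite in number, so for $|z|$ large $g$ is holomorphic, and $g$ is non-rational (else $\phi$ is rational), so $g$ is a transcendental holomorphic function on a neighborhood of $\infty$, hence by Casorati--Weierstrass on $\{|z|>R\}$ the image of $g$ is dense in $\mathbb{C}$; in particular there is $a'$ with $|a'|>\max\{C,R\}$ and $|g(a')|<1/C$, i.e. $|\phi(a')|>C$ — that's the wrong inequality again. Let me instead just pick $a'$ with $|g(a')|$ \emph{large}, i.e. $|g(a')|>C$, giving $|\phi(a')|<1/C$. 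That works: density of $g(\{|z|>R\})$ in $\mathbb{C}$ supplies points where $|g|$ is as large as desired with the preimage arbitrarily far out.

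\textbf{Case 2: $\phi$ has finitely many poles.} Then $\phi = P/Q$ with $Q$ a polynomial whose roots lie outside $\overline{\mathbb{D}}$ and $P$ entire; $P$ is transcendental (else $\phi$ rational). For $|z|$ large, $|Q(z)|$ grows polynomially while, by transcendence of $P$ and Casorati--Weierstrass at $\infty$ applied to $P$ (or more simply: a transcendental entire function is unbounded on every neighborhood of $\infty$, and its reciprocal, being transcendental meromorphic, has image dense near $\infty$), one finds $z$ with $|z|>C$ arbitrarily and $|P(z)|$ arbitrarily large, giving $|\phi(z)|>C$; this is the point $a$. For $a'$: the function $1/\phi = Q/P$ is meromorphic with finitely many poles (zeros of $P$), hence holomorphic and transcendental near $\infty$, so by Casorati--Weierstrass its image on $\{|z|>R\}$ is dense, yielding $a'$ with $|a'|>\max\{C,R\}$ and $|Q(a')/P(a')|>C$, i.e. $|\phi(a')|<1/C$.

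The common engine in both cases is: \emph{if $f$ is meromorphic and transcendental on $\mathbb{C}$ with finitely many poles, then on every neighborhood $\{|z|>R\}$ of $\infty$ the restriction of $f$ is holomorphic and $\infty$ is an essential singularity, so $f(\{|z|>R\})$ is dense in $\mathbb{C}$}; applying this to $\phi$ (after clearing finitely many poles, or on the complement of its finitely many poles) gives $a$ with $|\phi(a)|$ large, and applying it to $1/\phi$ gives $a'$ with $|1/\phi(a')|$ large. The one genuine subtlety — and the step I expect to require the most care — is handling the subcase where $\phi$ has \emph{infinitely} many poles, since then one cannot clear the poles and pass to an entire function; there I would use that the poles themselves accumulate at $\infty$ to produce $a$ directly (any point very close to a far-out pole has $|\phi|$ huge), and symmetrically use that either the zeros of $\phi$ also go off to $\infty$ (giving $a'$ directly) or, if not, that $1/\phi$ then has only finitely many poles and Case-1-for-$1/\phi$ applies. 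I would organize the write-up so that the Casorati--Weierstrass argument is stated once as a sublemma and then invoked twice, once for $\phi$-type behavior and once for $1/\phi$-type behavior.
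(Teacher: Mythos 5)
Your dichotomy (finitely many poles vs.\ infinitely many poles) and your use of Casorati--Weierstrass together with accumulating poles/zeros are exactly the paper's strategy, and your Case~1 is correct, including the further split according to whether the zeros of $\phi$ are finite or infinite in number. But there is a genuine gap in Case~2. You write $\phi = P/Q$ with $P$ transcendental entire and $Q$ a polynomial, and then assert that ``the function $1/\phi = Q/P$ is meromorphic with finitely many poles (zeros of $P$), hence holomorphic and transcendental near $\infty$.'' That assertion is false in general: a transcendental entire function may well have infinitely many zeros. For instance $\phi(z)=\sin z$ belongs to $\mathfrak{M}(\overline{\mathbb{D}})$, has no poles (so it falls squarely in your Case~2), yet its zero set $\{n\pi\}$ is infinite, so $1/\phi$ has infinitely many poles accumulating at $\infty$ and $\infty$ is not even an isolated singularity of $1/\phi$. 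Consequently the Casorati--Weierstrass sublemma you invoke for $1/\phi$ does not apply, and your construction of $a'$ in Case~2 breaks down precisely where you assumed it was the easy case. (A similar, if milder, imprecision affects your construction of $a$ in Case~2: having $|P(z)|$ arbitrarily large at some sequence $|z|\to\infty$ does not by itself make $|P(z)/Q(z)|$ large at those same points; one would have to compare growth rates, which you do not do.)

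The clean repair --- and what the paper actually does --- is to bypass $P$, $Q$ and $1/\phi$ entirely in the finitely-many-poles case: there $\phi$ is holomorphic on some $\{|z|>R\}$, so $\infty$ is an isolated singularity of $\phi$; since $\phi$ is not rational, $\infty$ can be neither removable nor a pole, hence it is essential. Casorati--Weierstrass applied once to $\phi$ itself then yields, on $\{|z|>\max(C,R)\}$, points where $|\phi|$ is as large as desired (giving $a$) and points where $|\phi|$ is as small as desired (giving $a'$). Alternatively, you could rescue your Case~2 as written by duplicating the Case~1 split: if $\phi$ has infinitely many zeros they accumulate at $\infty$ and a far-out zero supplies $a'$; only if $\phi$ has finitely many zeros does $1/\phi$ have finitely many poles, at which point your sublemma applies to $1/\phi$. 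Either repair is routine, but as submitted the argument does not cover all of Case~2.
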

\begin{proof} Since $\phi\in \mathfrak{M}(\overline{\mathbb{D}})$, $\phi$ is a meromorphic function over $\mathbb{C}$,
 and then the
infinity  $\infty$ is either an isolated singularity  or the limit
of poles. If $\infty$ is
  an isolated singularity, $\infty$ is
a  removable singularity, a pole  or an essential singularity. If
 $\infty$ were either a  removable singularity or a pole, then
$\phi$ would have finitely many singularities (poles), and by
complex analysis  $\phi$ is a rational function. This is a
contradiction to our assumption. Therefore,  $\infty$ is an
essential singularity of $\phi$. By
 Weierstrass' theorem in complex analysis, for each point $w\in \mathbb{C}\cup \{ \infty\}$ there is a sequence $\{z_n\}$
  tending to $\infty$ such that $\{\phi(z_n)\}$ tends to $w.$ Hence the conclusion of Lemma
 \ref{value} follows.

 If  $\infty$  is the limit of
  poles of  $\phi$, then for a fixed number
$C>0,$ one can find a point $a$ satisfying $|a|>C$ and
$|\phi(a)|>C$. To complete the proof, we will show that there exists
a point $a'$ such that $|a'|>C$ and $|\phi(a')|<\frac{1}{C}$.
If this were not true, then we would have
$$\frac{1}{|\phi(z)|} \leq C, |z| >C,$$
where $\frac{1}{\phi(z)}$ equals zero if $z$ is one pole of $\phi$.
Since  $\frac{1}{\phi}$ is bounded at a neighborhood of $\infty,$
$\infty$ is a removable singularity of $\frac{1}{\phi}$. Then  $\frac{1}{\phi}$ has only finitely many poles in
$\mathbb{C}\cup \{\infty\}$. Then by complex analysis $\frac{1}{\phi}$
is a rational function, and so is $\phi$. This derives a
contradiction to finish the proof.
 \end{proof}

For    a nonconstant function $\phi$ in
$\mathfrak{M}(\overline{\mathbb{D}})$ and for a local inverse $\rho$
of $\phi$, let $\rho^-$ be the inverse of $\rho $. We will use
Lemmas \ref{ac1} and \ref{value}   to prove the following.
\begin{lem}\label{r15} Suppose   $ \phi \in\mathfrak{M}(\overline{\mathbb{D}})$ is not a rational function. Then
there exists a bounded domain $\Omega\supseteq\overline{\mathbb{D}}$
having the following property:  if  $\rho$ is a local inverse of
$\phi$ at a point $\xi \in \mathbb{T}\setminus \widetilde{Y}$ such
that $\rho=\rho^*$ on some neighborhood of $\xi$,
then for   each curve $\gamma$
in $\Omega\setminus  \widetilde{Y}$  with $\gamma(0)=\xi$, we have $ \rho (\gamma,1)\in\overline{\Omega}$, i.e. the
value of the analytic continuation $\widetilde{\rho}$ of $\rho$
along $\gamma$ at endpoint $\gamma(1)$ lies in $\overline{\Omega}$.
\end{lem}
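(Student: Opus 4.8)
The plan is to exploit the hypothesis $\rho=\rho^{*}$ in two distinct ways — to pin down the initial value of the continuation and to extract an auxiliary ``reflected'' functional equation — and then to trap the continuation inside a bounded region manufactured from $\phi$ using that $\phi$ is not rational. First I would record the consequences of $\rho=\rho^{*}$. Since $\xi\in\mathbb{T}$ we have $\xi^{*}=\xi$, so $\rho(\xi)=\rho^{*}(\xi)=(\rho(\xi))^{*}$, whence $|\rho(\xi)|=1$, i.e.\ $\rho(\xi)\in\mathbb{T}\subseteq\overline{\mathbb{D}}$. Second, shrinking the neighbourhood of $\xi$ on which $\rho=\rho^{*}$ and using continuity of $z\mapsto z^{*}$ at $\xi$, the relations $\rho(z^{*})=(\rho(z))^{*}$ (which follows from $\rho=\rho^{*}$) and $\phi\circ\rho=\phi$ combine to give $\phi\big((\rho(z))^{*}\big)=\phi(\rho(z^{*}))=\phi(z^{*})$ near $\xi$. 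Both $\phi\circ\rho=\phi$ and this reflected identity are analytic relations, hence are preserved along any analytic continuation of $\rho$; and by Lemma~\ref{ac1}, $\rho$ admits an analytic continuation $\widetilde\rho$ along every curve in $\mathbb{C}\setminus\widetilde{Y}$ issuing from $\xi$, giving a well-defined value $\rho(\gamma,s)$.

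The first real step is a ``no escape to infinity'' principle: for any curve $\gamma$ in $\mathbb{C}\setminus\widetilde{Y}$ with $\gamma(0)=\xi$, the set $\{\rho(\gamma,s):s\in[0,1]\}$ is bounded. Indeed, if $\rho(\gamma,s_{n})\to\infty$ for some $s_{n}\to s_{0}$, then $(\rho(\gamma,s_{n}))^{*}\to0$, so by continuity of $\phi$ at $0$ we get $\phi\big((\rho(\gamma,s_{n}))^{*}\big)\to\phi(0)$; but the reflected identity gives $\phi\big((\rho(\gamma,s_{n}))^{*}\big)=\phi(\gamma(s_{n})^{*})\to\phi(\gamma(s_{0})^{*})$, forcing $\phi(\gamma(s_{0})^{*})=\phi(0)$, i.e.\ $\gamma(s_{0})\in(\phi^{-1}(\phi(0)))^{*}\subseteq X^{*}\subseteq\widetilde{Y}$, contradicting $\gamma\subseteq\mathbb{C}\setminus\widetilde{Y}$. (At this stage the bound still depends on $\gamma$.)

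To upgrade this to one domain $\Omega$ working simultaneously for all $\rho$ and all $\gamma$, I would invoke the non-rationality of $\phi$ through Lemma~\ref{value}, which supplies points arbitrarily far from $0$ where $|\phi|$ is arbitrarily large (and others where it is arbitrarily small). The target is a \emph{barrier} domain: choose a bounded open set $W\supseteq\phi(\overline{\mathbb{D}})$ whose boundary $\partial W$ avoids the critical values $\phi(Z')$, and let $\Omega$ be the connected component of $\phi^{-1}(W)$ containing $\overline{\mathbb{D}}$. Then $\overline{\mathbb{D}}\subseteq\Omega$, $\phi(\Omega)\subseteq W$, and (as $\phi$ is an open map and its poles, where $|\phi|$ is large, do not touch $\partial\Omega$) $\phi(\partial\Omega)\subseteq\partial W$; since $W\cap\partial W=\emptyset$ this yields $\phi(\Omega)\cap\phi(\partial\Omega)=\emptyset$. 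The genuine content, and the main obstacle of the whole proof, is that $W$ can be chosen so that this component $\Omega$ is bounded: this is exactly where Lemma~\ref{value} and the no-escape step are used, where the hypothesis ``$\phi$ not rational'' is indispensable (a rational symbol can genuinely violate the conclusion), and where some care with planar topology is needed — the components of $\phi^{-1}(W)$, the open mapping theorem, and the behaviour of $\phi$ near its poles.

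Granting such an $\Omega$, the lemma follows quickly. Let $\gamma$ be a curve in $\Omega\setminus\widetilde{Y}$ with $\gamma(0)=\xi$. For each $s$ we have $\phi(\rho(\gamma,s))=\phi(\gamma(s))\in\phi(\Omega)\subseteq W$, so $\rho(\gamma,s)\in\phi^{-1}(W)$; thus the connected set $\{\rho(\gamma,s):s\in[0,1]\}$ is contained in $\phi^{-1}(W)$ and meets $\Omega$ at $s=0$, since $\rho(\gamma,0)=\rho(\xi)\in\mathbb{T}\subseteq\Omega$ by the first paragraph. As $\Omega$ is a connected component of $\phi^{-1}(W)$, the entire set lies in $\Omega$; in particular $\rho(\gamma,1)\in\Omega\subseteq\overline{\Omega}$, which is the assertion of the lemma.
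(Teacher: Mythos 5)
Your first two paragraphs are essentially sound: the observations $\rho(\xi)\in\mathbb{T}$ and $\phi\big((\rho(\gamma,s))^{*}\big)=\phi(\gamma(s)^{*})$ are correct (the latter because the continuation of $\rho^{*}$ along $\gamma^{*}$ equals $\bigl(\rho(\gamma,s)\bigr)^{*}$, so the local-inverse identity for $\rho^{*}$ propagates), and the resulting ``no escape to infinity along a fixed curve'' step matches what the paper uses inside Lemma \ref{ac1}. However, these are only preliminaries; the domain $\Omega$ still has to be produced.

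The third paragraph is where the argument breaks down, and you flag the issue yourself: you posit a bounded open $W\supseteq\phi(\overline{\mathbb{D}})$ and take $\Omega$ to be the component of $\phi^{-1}(W)$ containing $\overline{\mathbb{D}}$, then assert that Lemma \ref{value} plus the per-curve bound should let one arrange that this $\Omega$ is bounded. This is not a gap that can be filled within your framework; the construction is simply wrong in general. Take $\phi(z)=e^{z}$, which lies in $\mathfrak{M}(\overline{\mathbb{D}})$ and is not rational. Since $|e^{z}|$ is bounded away from $0$ and $\infty$ on $\overline{\mathbb{D}}$, any reasonable bounded $W\supseteq\phi(\overline{\mathbb{D}})$ will contain an annulus $\{r<|w|<R\}$ around $\phi(\overline{\mathbb{D}})$; the corresponding component of $\phi^{-1}(W)$ containing $\overline{\mathbb{D}}$ then contains the full vertical strip $\{\log r<\operatorname{Re}z<\log R\}$, which is unbounded. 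So no choice of $W$ makes your $\Omega$ bounded. (The lemma itself is still true for $e^{z}$, because the only local inverse with $\rho=\rho^{*}$ is the identity; but your mechanism for producing $\Omega$ fails outright.) The per-curve boundedness you established does not upgrade to a uniform bound, because the bound depends on $\gamma$, and $\phi^{-1}(W)$ can sprawl even though every individual continuation path is bounded.

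The paper sidesteps this entirely. Instead of manufacturing $\Omega$ as a preimage component, it builds $\Omega$ directly as the interior of the curve $\Gamma=(\partial V)^{*}$, where $V$ is a small Jordan neighbourhood of $0$ on which $\phi(z)-\phi(0)=g(z)^{k}$ with $g$ biholomorphic. This $\Omega$ is bounded by fiat (interior of a Jordan curve), and the complement of $\overline{\Omega}$ is exactly $V^{*}$, a neighbourhood of $\infty$. The actual work is then a contradiction argument: assuming the continuation ends in $V^{*}$, the paper continues the inverse branch out to a far point $a$ supplied by Lemma \ref{value}, and by tracking the $*$-reflected branch shows the continuation is forced either back into $V^{*}$ (Case I) or into a small pole-neighbourhood $U_{i}$ (Case II), each of which is impossible. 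That pole-and-reflection bookkeeping is where the nonrationality of $\phi$ and the $\rho=\rho^{*}$ hypothesis do the real work, and it has no analogue in your proposal. You would need to abandon the preimage-component construction and instead prove a statement like ``the continuation cannot enter $V^{*}$'' directly, which is precisely the hard content you have deferred.
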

\begin{proof}
Suppose   $ \phi \in\mathfrak{M}(\overline{\mathbb{D}})$ is not a
rational function. First we   give the construction of $\Omega$. By
comments above Lemma \ref{ext} there exists  a small neighborhood
$V$ of $0$ biholomorphic function $g:V\rightarrow r\mathbb{D}(r>0)$
such that $\phi(z)-\phi(0)=g(z)^k$ on $V$ for some positive integer
$k$. One can require that
 $V$ is a  Jordan domain and
$\partial V$ is contained in $\mathbb{D}$. Put
$$\Gamma=(\partial V)^*=\{\frac{1}{\overline{z}}:z\in \partial V \},$$which  is
a  closed Jordan curve outside  $\overline{\mathbb{D}}$. Let
$\Omega$  be the interior of $\Gamma$, and then
$$V^*=\mathbb{C}\setminus\overline{\Omega}.$$

Let $\xi\in\mathbb{T}\setminus  \widetilde{Y}$, and let $\gamma$ be
a curve  in $\Omega\setminus \widetilde{Y}$ with $\gamma(0)=\xi$.
Suppose that  $\rho$ is a local inverse of $\phi$ at   $\xi$
  such that $\rho=\rho^*$ on some neighborhood of $\xi$.   To reach a contradiction, we assume
  $\widetilde{\rho}(\gamma(1))=\rho (\gamma,1)\in \mathbb{C}\setminus\overline{\Omega}= V^*$.
  Let $$\gamma_s(t)=\gamma(st),\, t\in [0,1]$$ and by Lemma \ref{ac1}
   $\rho$ admits an  analytic continuation  $\rho_s$
    along $\gamma_s$. Recall that  $\rho(\gamma,s)$ is
    the value of  $\rho_s$ at the endpoint $\gamma_s(1)=\gamma(s),$
    and let
   \begin{equation}\sigma(s)=\rho (\gamma,s ), \,s\in [0,1]. \label{cont1} \end{equation}
Then $\sigma$ is a curve in $\mathbb{C}\setminus \widetilde{Y}$.
Since
  $\gamma$ has no intersection with
$\phi^{-1}(\phi(Z'))$, $\rho^{- }$ admits an analytic continuation
$\widetilde{\rho}^{- }$ along $\sigma$, and by (\ref{cont1}) we have
$$\rho^-(\sigma,t) =\gamma(t), \,t\in [0,1].$$
In particular, we get  \begin{equation}\widetilde{\rho}^{-
}(\sigma(1) ) =\rho^-(\sigma,1)=\gamma(1)   . \label{cont2}
\end{equation}

    Let $\{p_i\}_{i=1}^m$ be  all the poles of $\phi$ on $\overline{\Omega}$.
One can construct disjoint connected neighborhoods $U_i$
$(i=1,\ldots,m)$ of $ p_i$  such that
\begin{itemize}
\item[(1)] $\phi$ has no zeros in $\overline{U_i}$ for $1\leq i\leq m$;
 \item[(2)] $\overline{U_i}\cap \phi^{-1}(f(0))^*\subseteq\{p_i\}$ for $1\leq i\leq
 m$;
 \item[(3)] For such $i$ that $\phi(p_i^*)=\phi(0)$,
  there exists an enough small connected neighborhood $V_i\subseteq V$ of $ 0$,
 such that  $\phi|_{U_i^*}$, $\phi|_{V_i}$ are proper
 maps satisfying $\phi(U_i^*)=\phi(V_i)$; for other $i$, let $V_i=V.$
\end{itemize}
   In fact, Condition (1) is easy to fulfill.  Since $\phi^{-1}(\phi(0)) $ is discrete and
  $\phi^{-1}(\phi(0))^*$ has at most one accumulation point $0$,
   Condition (2) is fulfilled if we let $U_i$ be enough small. By Lemma \ref{ext}
  we can choose $U_i$ and $V_i$ to satisfy (3) and be as small as possible thus to meet (1) and (2).
   Therefore, one has (1)-(3) as desired.

Let $$M=\max_{z\in \overline{\Omega}\setminus\bigcup_{i=1}^m U_i}
|\phi(z)|,$$ and define
$$\varepsilon_i=\mathrm{dist}(\phi(0),\phi(\overline{U_i^*})),\, i=1,\cdots,m.$$
If each $\varepsilon_i$ equals  zero,  set $\varepsilon=+\infty$;
otherwise, write \begin{equation}\varepsilon =\min
\,\{\varepsilon_i: \varepsilon_i>0,\, 1\leq i\leq m\}
.\label{rat34}\end{equation}
 Then there exists a number $\delta>0$ such that
\begin{equation}\phi(\delta\mathbb{D})\subseteq O(\phi(0),\varepsilon) \quad  \mathrm{and}
\quad \delta\mathbb{D}\subseteq \bigcap_{i=1}^m V_i.
\label{rat33}\end{equation} By Lemma \ref{value} we get
 a point $a\notin \widetilde{Y}$ satisfying $$|a|>\frac{1}{\delta} \quad \mathrm{and}  \quad |\phi(a)|>M.$$
Since $V^*\setminus \widetilde{Y}$ is path-connected, we can choose
a curve $\varsigma$ in $V^*\setminus \widetilde{Y}$ connecting
$\widetilde{\rho}(\gamma(1))=\sigma(1)$ with $a$. By Lemma
\ref{ac1}, $\rho^{- }$ admits an analytic continuation $\tau$ along
$\sigma \varsigma$,  where $\sigma\zeta$ is defined by
\begin{equation*}
\sigma \varsigma (t)=\left\{ \begin{aligned}
            \sigma(2t) \quad & 0\leq t\leq \frac{1}{2}, \\
                    \varsigma (2t-1) \quad & \frac{1}{2}<t\leq 1.
                          \end{aligned} \right.
                          \end{equation*}
Then
 $\rho^{- *}$ admits an analytic continuation $\tau^*$
along $\sigma^*  \varsigma^*$. Note that both $\tau$ and
$\tau^*$ are local inverses of $f$. Since
$$|\phi(\tau(a))|=|\phi(a)|>M,$$
  by the
definition of $M$ we get either  $\tau(a)\in V^*$ or $\tau(a)\in
U_i$ for some $i$. As follows, we will distinguish two cases to derive contradictions.

\vskip2mm
\noindent \textbf{Case I.}  $\tau(a)\in V^*$.   For
$w,z\in V$, let $\phi(w)=\phi(z)$. Recall that on $V$ we have
$$\phi(z)-\phi(0)=g(z)^k,$$ and then $g(w)^k=g(z)^k.$ Since
$g|_V$ is  biholomorphic, we get
$$w=g^{-1}\circ(\lambda g(z)), \, z\in V,$$
  where
$\lambda=\exp(\frac{2\pi ji}{k})$ for some integer $j$ in $\{1,\cdots,k\}$. Rewriting
$\rho_{j}$ for the map \linebreak  $g^{-1}\circ(\lambda g(z))$, we
have
 $\rho_{j}(V)=V$ and $\rho_{ j}(0)=0$.

Note that $$\tau^*(a^*) =(\tau(a))^*  \in (V^*)^*=V. $$ Since
$\phi(\tau^*(a^*))=\phi(a^*)$ and $a^*\in
 V$, there exists a  $j_0$ such
that
 $$\rho_{j_0}(a^*)=\tau^*(a^*).$$
Therefore $\tau=(\tau^*)^*$  extends analytically to
$$ \rho_{j_0}^*:V^*\rightarrow V^*.$$
Recall that  $\tau$ and  $\widetilde{\rho}^{-}$ are  analytic
continuations of $\rho^{- }$ along $\sigma \varsigma$ and
$\sigma$, respectively. Thus $\widetilde{\rho}^{-}$ also extends
analytically to $ \rho_{ j_0} ^*$. Then by (\ref{cont2})
$$\gamma(1)=  \widetilde{\rho}^{-}(\sigma(1)) =
 \rho_{j_0}^*(\sigma(1))\in V^*.$$  This
contradicts  with the fact that   $\gamma \subseteq\Omega$.
\vskip2mm
\noindent  \textbf{Case II.}  There is some $i$ such that
$\tau(a)\in U_i$.
 First we   show $\phi(p_i^*)=\phi(0)$.  In fact, since $a^*\in
\delta\mathbb{D}$, by (\ref{rat33}) we have
$$|\phi(0)-\phi(\tau^*(a^*))|=|\phi(0)-\phi(a^*)|<\varepsilon. $$Since $\tau^*(a^*)=(\tau(a))^*\in U_i^*$,
$$\varepsilon_i=\mathrm{dist}(\phi(0),\phi(\overline{U_i^*}))<\varepsilon,$$
which along with (\ref{rat34}) gives $\varepsilon_i=0$.  This shows
that $\overline{U_i^*}\cap \phi^{-1}(\phi(0)) $ is not empty.
  By condition (2)  we immediately get
  $\overline{U_i^*}\cap \phi^{-1}(\phi(0)) \subseteq\{p_i^*\}$, and thus
  $$ \phi(p_i^*)=\phi(0) . $$
Since Condition (1) shows that $$\min \{ |\phi(z)|: z\in
\overline{U_i}, 1\leq i \leq m\}>0,$$ by Lemma \ref{value} there is  a point $a'\notin \widetilde{Y}$ satisfying
\begin{equation}
  |\phi(a')|<\min \{ |\phi(z)|: z\in \overline{U_i}, 1\leq i \leq m\}  \label{rat4} \end{equation}
   and $|a'|>\frac{1}{\delta}$.
By (\ref{rat33}) $a'\in V_i^*.$ Let $\zeta$ be a curve  in
$V_i^*\setminus \widetilde{Y}$ joining $a$ with $a'$, and let
$\widetilde{\tau}$ be the analytic continuation of $\tau$ along
$\zeta$. Since  both $\tau$ and $\tau^*$ are local
inverses of $\phi$, so are  $\widetilde{\tau}$ and
$\widetilde{\tau}^*$, and $\widetilde{\tau}^*$ is the analytic
continuation of $\tau^*$ along $\zeta^*$.  By Condition (3) and
Lemma \ref{ext}, along any curve in $V_i\setminus\{0\}$,  $\tau^*$
admits  analytic continuation  with values in
$U_i^*\setminus\{p_i^*\}$. Thus we have
$$\widetilde{\tau}^*(a'^*)\in U_i^*.$$ Since
$(\widetilde{\tau}(a'))^*=\widetilde{\tau}^*(a'^*)$,
$\widetilde{\tau}(a')$ lies in $U_i,$ and hence
$\phi(a')=\phi(\widetilde{\tau}(a'))\in \phi(U_i)$. This is a
contradiction to (\ref{rat4}). In either case, we conclude a
contradiction thus to finish  the proof of   Lemma \ref{r15}.
\end{proof}
  Suppose $\phi$ is a function in
$H^\infty(\overline{\mathbb{D}})$.
  For
$\xi\in\mathbb{T}$, define
$$m(\xi)=\lim_{\delta\to0^+}\min_{\eta\in
O(\xi,\delta)\cap\mathbb{T}}N(\phi-\phi(\eta),\mathbb{T}).$$
Clearly, $m(\xi)\le N(\phi-\phi(\xi),\mathbb{T})$. Write
$$S= \{\xi\in\mathbb{T}:m(\xi)<N(\phi-\phi(\xi),\mathbb{T})\}.$$
We need the following  lemma.
\begin{lem}\label{count}Let $\phi $ be a nonconstant function in $H^\infty(\overline{\mathbb{D}})$. Then $S$ is countable.
\end{lem}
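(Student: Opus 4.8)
The plan is to reduce the countability of $S$ to the elementary fact that the transverse self-crossings of a real-analytic curve are isolated. Concretely, writing $\nu(\eta):=N(\phi-\phi(\eta),\mathbb{T})$, I will show that outside a finite set $E_0$ the function $\nu$ is, on a suitable punctured arc around each point, equal to a constant, and that $S$ consists precisely of those $\xi$ at which $\nu(\xi)$ strictly exceeds that ambient constant value; such $\xi$ are then automatically isolated in $S$, so $S$ is countable.

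First I would discard the finite set $E_0:=\phi^{-1}(\phi(\mathcal Z'))\cap\mathbb{T}$, where $\mathcal Z'$ is the zero set of $\phi'$ in $\overline{\mathbb D}$; this set is finite because $\phi$, being nonconstant, is holomorphic and not locally constant on a neighbourhood of $\overline{\mathbb D}$. It suffices to prove that $S\setminus E_0$ is countable. Fix $\xi\in\mathbb{T}\setminus E_0$; then every point of the finite set $\phi^{-1}(\phi(\xi))\cap\mathbb{T}=\{w_1=\xi,w_2,\dots,w_k\}$ is a regular point of $\phi$, so one can choose pairwise disjoint neighbourhoods $U_j\ni w_j$ on each of which $\phi$ is biholomorphic onto a common disc $O(\phi(\xi),\varepsilon)$. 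A compactness argument shows that for $\eta\in\mathbb{T}$ close enough to $\xi$ every zero of $\phi-\phi(\eta)$ on $\mathbb{T}$ lies in $\bigcup_j U_j$, and since each $\phi|_{U_j}$ is injective this yields $\nu(\eta)=\#\{\,j:\phi(\eta)\in\Gamma_j\,\}$, where $\Gamma_j:=\phi(U_j\cap\mathbb{T})$ is a regular real-analytic arc through $\phi(\xi)$; note the index $j=1$ always occurs.

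The crux is the following dichotomy: for each $j\ge2$, \emph{either} $\phi(\xi)$ is an isolated point of $\Gamma_1\cap\Gamma_j$, \emph{or} $\Gamma_1$ and $\Gamma_j$ coincide on a neighbourhood of $\phi(\xi)$. I would prove this by parametrising $U_j\cap\mathbb{T}$ near $w_j$ by $\psi_j(\theta)=\phi(e^{i\theta})$, which is holomorphic and injective on a small disc about the real point $\theta_j$ with $w_j=e^{i\theta_j}$ (here $\phi'(w_j)\ne0$, i.e. $\xi\notin E_0$, is used). If $\phi(\xi)$ is not isolated in $\Gamma_1\cap\Gamma_j$, then $G:=\psi_j^{-1}\circ\psi_1$ is holomorphic near $\theta_1$ and takes real values along a sequence tending to $\theta_1$; Lemma \ref{real} then forces $G$ to be real on the whole real trace of its domain, which translates back into $\Gamma_1\subseteq\Gamma_j$ near $\phi(\xi)$. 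I expect this reduction to Lemma \ref{real} to be the main step; it is not deep, but it is exactly where the analyticity of the symbol curve $\phi(\mathbb{T})$ is used.

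Granting the dichotomy, for $\eta\in\mathbb{T}$ near $\xi$ with $\eta\ne\xi$ the indices $j$ in the \emph{isolated} alternative contribute nothing to $\nu(\eta)$ (since then $\phi(\eta)\in\Gamma_1\setminus\{\phi(\xi)\}$ misses $\Gamma_j$), while those in the \emph{coincide} alternative contribute exactly $1$; hence $\nu(\eta)$ equals a constant $N_\xi$ for all such $\eta$, with $N_\xi\le\nu(\xi)$ and equality unless some $j\ge2$ falls in the isolated alternative. Consequently $m(\xi)=N_\xi$, so $\xi\in S$ if and only if some $\Gamma_j$ meets $\Gamma_1$ only at $\phi(\xi)$. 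Finally, for any $\eta\ne\xi$ in the punctured arc around such a $\xi$, applying the same local constancy in a small neighbourhood of $\eta$ gives $m(\eta)=N_\xi=\nu(\eta)$, so $\eta\notin S$; thus every point of $S\setminus E_0$ is isolated in $S$. Hence $S\setminus E_0$ is a discrete subspace of the separable metric space $\mathbb{T}$ and therefore countable, and since $E_0$ is finite, $S$ is countable.
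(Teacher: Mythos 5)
Your proof is correct, and it is closely related to the paper's argument but packaged differently. The paper proceeds by contradiction: assuming $S$ uncountable, it picks a point $\xi_0\in S_l$ that is both an accumulation point of $S_l$ and avoids $F=\phi^{-1}(\phi(\mathcal Z'))$, then uses Rouch\'e's theorem on an annulus $A_r$ to localize the circle zeros, builds $l$ local inverses $\rho_i$ near $\xi_0$, and invokes Corollary~\ref{arcarc} to show $\rho_i(O(\xi_0,\delta)\cap\mathbb{T})\subseteq\mathbb{T}$, yielding $m(\xi_0)\ge l=\nu(\xi_0)$ and hence $\xi_0\notin S$. You instead prove directly that $S\setminus E_0$ is discrete: for any $\xi\notin E_0$ you decompose $\nu(\eta)$ for nearby $\eta$ into contributions from the arcs $\Gamma_j$, prove the isolated/coincide dichotomy for each pair via the transition map $G=\psi_j^{-1}\circ\psi_1$ and Lemma~\ref{real}, and conclude that $\nu$ is constant on a punctured neighborhood, so every point of $S\setminus E_0$ is isolated. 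This buys a cleaner, positively-phrased statement (discreteness of $S\setminus E_0$) and avoids the detour through condensation points and Rouch\'e, while the paper's version recycles the annulus/local-inverse machinery it already has in place for the subsequent Proposition~\ref{rat3}. Both hinge on the same analytic fact (Lemma~\ref{real}) that self-intersections accumulating at a regular point force the two arcs to merge locally, so the underlying idea is identical; what differs is the direct-versus-contradiction framing and whether one works with local inverses of $\phi$ or with the transition map between the parametrizations $\psi_1,\psi_j$.

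One small point worth tightening: in the dichotomy you should make explicit that $G(\theta_1)=\theta_j$ (which follows from injectivity of $\psi_j$), so that for real $\theta$ near $\theta_1$ the value $G(\theta)$ is a real number near $\theta_j$ and $e^{iG(\theta)}\in U_j\cap\mathbb{T}$, which is what puts $\psi_1(\theta)=\psi_j(G(\theta))$ into $\Gamma_j$. Also note that for the count $\nu(\eta)$ you only need the one-sided inclusion $\Gamma_1\subseteq\Gamma_j$ near $\phi(\xi)$ (which is what Lemma~\ref{real} gives you directly), not full local coincidence, so the argument closes as written.
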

\begin{proof}
To reach a contradiction, assume   $S$ is uncountable.  Let $\mathcal{Z}'$ denote the zero set of $\phi ' $ on $\overline{\mathbb{D}}$ and
 $$F=\phi^{-1}(\phi(\mathcal{Z}')).$$ For each
positive integer $j$, put $$S_j=\{\xi\in
S:N(\phi-\phi(\xi),\mathbb{T})=j\}.$$ Then there exists at least a
positive integer $l$ such that $S_{l}$ is uncountable. Recall that
an uncountable  set in $\mathbb{C}$ has infinitely many accumulation
points.
   One can pick an  accumulation point $\xi_0$ of $S_{l}$  such that
$\xi_0\not\in F$.

For $r\in (0,1)$, let $A_r$ denote  the   annulus $$\{z\in \mathbb{
C}: r<|z|<\frac{1}{r}\}.$$  Since $\phi-\phi(\xi_0)$ has finitely many zeros on  $\mathbb{T}$
and $\phi-\phi(\xi_0)$ is holomorphic on $\mathbb{T}$,
  one can pick an  $r(0<r<1)$ close to $1$ such that  all   zeros of $\phi-\phi(\xi_0)$
in $\overline{A_r}$ lie on  $\mathbb{T}$.  By
 Rouch$\mathrm{\acute{e}}$'s
  Theorem, there exists a positive number
$\delta $ such that for each $z$ in $ O(\xi_0,\delta)$
$$N(\phi-\phi(z),A_r)=N(\phi-\phi(\xi_0),A_r)=N(\phi-\phi(\xi_0),\mathbb{T})=l.$$ On
the other hand, there is a sequence $\{\xi_k\}$ in $  S_{l}\cap
[O(\xi_0,\delta)\setminus \{\xi_0\} ]$, such that
$\xi_k\to\xi_0(k\to\infty)$. Thus,$$l=N(\phi-\phi(\xi_k),A_r)\ge
N(\phi-\phi(\xi_k),\mathbb{T})=l.$$ This means that each zero of
$\phi-\phi(\xi_k)$ in $A_r$ lies on $\mathbb{T}$. Since $\xi_0\notin
F$, there exist  $l$ local inverses $\rho_0, \dots, \rho_{l-1}$ of
$\phi$ defined on $O(\xi_0,\delta)$;
 that is,   $$\phi (\rho_i)=\phi, \, i=0,\dots,l-1 .$$
  Note that
$\rho_0(\xi_0), \dots, \rho_{l-1}(\xi_0)$ are exactly $l$
zeros of $\phi-\phi(\xi_0)$ on $\mathbb{T}$, and   for  all 
 $k$  we have $$\rho_i(\xi_k)\in\mathbb{T},\, i=0,\dots,l-1.$$ By
Corollary \ref{arcarc}, $\rho_i(O(\xi_0,\delta
)\cap\mathbb{T})\subseteq\mathbb{T},\,i=0,\dots,l-1$. We can require
$\delta$ to be enough small such that $\rho_i(O(\xi_0,\delta
)\cap\mathbb{T})$ are  pairwise disjoint. Hence for each $\xi\in
O(\xi_0,\delta )\cap\mathbb{T}$, $\phi-\phi(\xi)$ has   $l$ distinct
zeros on $\mathbb{T}$, $\rho_0(\xi), \dots, \rho_{l-1}(\xi)$.
Therefore   $$m(\xi_0)\ge l= N(\phi-\phi(\xi_0),\mathbb{T}),$$ which
derives a contradiction to $\xi_0\in S$,  finishing the proof.
\end{proof}

 By using Lemmas \ref{r15} and \ref{count}, one  can prove the
following.
\begin{prop}  \label{rat3}Suppose $\phi$ is a nonconstant function in
$\mathfrak{M}(\overline{\mathbb{D}})$. Then $$ o(\phi)=b(\phi) .$$ 
\end{prop}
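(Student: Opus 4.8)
Since $b(\phi)\,|\,o(\phi)$ (established after Theorem~\ref{group2}), in particular $b(\phi)\le o(\phi)$, it suffices to prove $o(\phi)\le b(\phi)$. The plan is to show that the finite cyclic group $G(\phi)$ acts on $\overline{\mathbb{D}}$ by M\"obius transformations, in the precise sense that \emph{each $\rho\in G(\phi)$ is the restriction to $\mathbb{T}$ of an automorphism $\widehat{\rho}$ of $\mathbb{D}$ with $\phi\circ\widehat{\rho}=\phi$}. Granting this, set $o=o(\phi)$; by Corollary~\ref{uni} the map $\rho\mapsto\widehat{\rho}$ is injective, so $\{\widehat{\rho}:\rho\in G(\phi)\}$ is a cyclic subgroup of $\mathrm{Aut}(\mathbb{D})$ of order $o$, and, being finite, it fixes a point of $\mathbb{D}$ and is therefore conjugate by some $\mu\in\mathrm{Aut}(\mathbb{D})$ to $\langle z\mapsto e^{2\pi i/o}z\rangle$. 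Since $o(\cdot)$ and $b(\cdot)$ are unchanged under precomposition with automorphisms of $\mathbb{D}$ (such a change conjugates $G(\cdot)$ and preserves the order of the Cowen--Thomson Blaschke factor), we may assume $\phi(e^{2\pi i/o}z)=\phi(z)$ on a disk $\{|z|<R\}$ with $R>1$; comparing Taylor coefficients then gives $\phi(z)=h(z^{o})$ with $h$ holomorphic on $\{|z|<R\}$. As $z^{o}$ is a finite Blaschke product of order $o$, this forces $b(\phi)\ge o$, hence $b(\phi)=o(\phi)$.

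To construct $\widehat{\rho}$, fix $\rho\in G(\phi)$; by Lemma~\ref{GP} it is an automorphism of $\mathbb{T}$ of winding number $1$. Choose $\xi_{0}\in\mathbb{T}\setminus\widetilde{Y}$ with $\phi'(\xi_{0})\ne0$; then $\phi$ is univalent on small neighborhoods of $\xi_{0}$ and of $\rho(\xi_{0})$, so composing the corresponding local inverses yields a holomorphic local inverse $\widetilde{\rho}$ of $\phi$ that agrees with $\rho$ on an arc of $\mathbb{T}$ through $\xi_{0}$. Since $\rho(\mathbb{T})\subseteq\mathbb{T}$ and $z^{*}=z$ on $\mathbb{T}$, the holomorphic functions $\widetilde{\rho}$ and $\widetilde{\rho}^{*}$ coincide on that arc, whence $\widetilde{\rho}=\widetilde{\rho}^{*}$ near $\xi_{0}$. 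If $\phi$ is not rational, Lemma~\ref{r15} furnishes a bounded domain $\Omega\supseteq\overline{\mathbb{D}}$, on a neighborhood of which $\phi$ is holomorphic, such that $\widetilde{\rho}$ admits analytic continuation along every curve in $\Omega\setminus\widetilde{Y}$, with all continued values remaining in $\overline{\Omega}$. When $\phi$ is rational one obtains the same confinement of values more directly, using that $\phi$ extends to a branched covering $\widehat{\mathbb{C}}\to\widehat{\mathbb{C}}$ of finite degree, so its local inverses continue over $\widehat{\mathbb{C}}$ minus a finite set, and the symmetry $\widetilde{\rho}=\widetilde{\rho}^{*}$ together with the winding-number normalization keeps the continued values in $\overline{\mathbb{D}}$.

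The decisive step, which I expect to be the main obstacle, is to show that this continuation is \emph{single-valued}, so that continuing $\widetilde{\rho}$ along loops reproduces $\widetilde{\rho}$ and $\widetilde{\rho}$ becomes a genuine holomorphic function on an open neighborhood of $\overline{\mathbb{D}}$. The mechanism I would use is rigidity combined with countability: any continuation of $\widetilde{\rho}$ that returns near $\mathbb{T}$ is again a local inverse of $\phi$ carrying an arc of $\mathbb{T}$ into $\mathbb{T}$, hence is locally a member of $G(\phi)$, so by Corollary~\ref{uni} two such germs agreeing at a single point coincide; meanwhile Lemma~\ref{count} shows that $N(\phi-\phi(\eta),\mathbb{T})$ is locally minimal off a countable subset of $\mathbb{T}$, which is what lets one carry $\rho$ consistently past the countably many exceptional boundary points and conclude that one full turn around $\mathbb{T}$ returns $\widetilde{\rho}$ to itself. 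Once single-valuedness is in hand, write $\widehat{\rho}$ for the resulting holomorphic function on a neighborhood of $\overline{\mathbb{D}}$; it satisfies $\phi\circ\widehat{\rho}=\phi$ and $\widehat{\rho}=\widehat{\rho}^{*}$, so $\widehat{\rho}(\mathbb{T})\subseteq\mathbb{T}$, and Corollary~\ref{uni} gives $\widehat{\rho}|_{\mathbb{T}}=\rho$. Finally, since $\widehat{\rho}|_{\mathbb{T}}=\rho$ is an orientation-preserving homeomorphism of $\mathbb{T}$ of winding number $1$, the argument principle shows that $\widehat{\rho}$ assumes each value of $\mathbb{D}$ exactly once and no value of $\mathbb{C}\setminus\overline{\mathbb{D}}$; hence $\widehat{\rho}$ restricts to a biholomorphism of $\mathbb{D}$, i.e.\ $\widehat{\rho}\in\mathrm{Aut}(\mathbb{D})$, which is the required M\"obius extension.
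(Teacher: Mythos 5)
Your central claim --- that each $\rho\in G(\phi)$ is the restriction to $\mathbb{T}$ of an automorphism $\widehat{\rho}$ of $\mathbb{D}$ with $\phi\circ\widehat{\rho}=\phi$ --- is false, and the failure is already visible for rational $\phi$, where the Proposition is least interesting but your mechanism must still work. Take $\phi=B$ a finite Blaschke product of order $3$. Then $B\in\mathfrak{R}(\overline{\mathbb{D}})\subseteq\mathfrak{M}(\overline{\mathbb{D}})$ and $o(B)=b(B)=3$, so the Proposition holds; but if the generator $\rho$ of $G(B)$ were the restriction of a M\"obius map $\widehat{\rho}$ with $B\circ\widehat{\rho}=B$, then $\widehat{\rho}$ would be elliptic of order $3$, hence fix some $p\in\mathbb{D}$, and after conjugating by a M\"obius $\mu$ with $\mu(0)=p$ one would get $B\circ\mu(e^{2\pi i/3}z)=B\circ\mu(z)$, forcing $B=\eta(\mu^{-1}(z)^{3})$ for a M\"obius $\eta$. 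A real-parameter count (degree-$3$ Blaschke products form a $7$-real-parameter family, while those of the form $\eta(\mu(z)^{3})$ give at most $6$) shows a generic degree-$3$ $B$ is not of that form, so no such $\widehat{\rho}$ exists.

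Equivalently, the single-valuedness you flag as ``the decisive step'' genuinely fails: a local inverse of $\phi$ has branch points inside $\mathbb{D}$ (at the simple preimages in $\phi^{-1}(\phi(Z'))$ of the critical values), and the monodromy around such a point is a nontrivial permutation of the local inverses. Your rigidity-plus-countability argument, via Corollary~\ref{uni} and Lemma~\ref{count}, only controls continuation along $\mathbb{T}$ and past the countably many bad boundary points; it says nothing about loops encircling those interior branch points, which is where the obstruction actually lives. This is exactly why the paper never attempts to globalize an individual local inverse. Instead, it chooses a good boundary point $\xi_{0}$ with $\phi(\xi_{0})\in\partial\phi(\mathbb{D})$, collects all $n_{0}=N(\phi-\phi(\xi_{0}),\mathbb{T})$ local inverses near $\xi_{0}$ that carry $\mathbb{T}$ into $\mathbb{T}$, shows analytic continuation merely permutes this finite family, and then forms the monodromy-invariant product $B(z)=\prod_{i}\widetilde{\rho_{i}}(z)$. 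That product is single-valued precisely because it is symmetric in the branches; it extends to a finite Blaschke product of order $n_{0}$ with $\phi=h(B)$, and the chain $o(\phi)\ge b(\phi)\ge\mathrm{order}\,B=o(B)=o(\phi)$ closes the argument. The symmetrization step is the idea your proposal is missing, and without it the M\"obius-extension route cannot be repaired.
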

\begin{proof}
 We will first construct some  local inverses of
$\phi$ that maps some arc of $\mathbb{T}$ into  $\mathbb{T}$. For
this,  by Lemma \ref{count}
 $\phi(S)$ is countable as well as
$S$,
 and then
$\partial \phi(\mathbb{D})\setminus \phi(S)$ is  uncountable. Then
there is a point
  $\xi_0$ in $\mathbb{T}\setminus (\widetilde{Y}\cup S)$ satisfying
$$ \phi(\xi_0)\in\partial \phi(\mathbb{D}).$$ Rewrite
$n_0=N(\phi-\phi(\xi_0),\mathbb{T})=m(\xi_0)$. Then one can find an
$r \in (0,1)$ close to $1$ such that
 $$n_0=N(\phi-\phi(\xi_0),\mathbb{T})= N(\phi-\phi(\xi_0),\overline{A_{r}}).$$ By application of Rouch$\mathrm{\acute{e}}$'s theorem,
 there exists a positive number   $\delta>0$ satisfying
$$N(\phi-\phi(\xi), \mathbb{T})\leq N(\phi-\phi(\xi),\overline{A_{r }})= N(\phi-\phi(\xi_0),\overline{A_{r}})=n_0,\,  \xi\in
O(\xi_0,\delta)\cap\mathbb{T}.$$
  By  definition of
$m(\xi_0)$, $N(\phi-\phi(\xi),\mathbb{T})\geq n_0$, forcing
\begin{equation} \label{num} N(\phi-\phi(\xi),\mathbb{T})=m(\xi_0)=n_0,\,  \xi\in
O(\xi_0,\delta)\cap\mathbb{T}.\end{equation}  As done in Lemma
\ref{count} one can   find $n_0$ holomorphic functions $\rho_0,
\dots, \rho_{n_0-1}$ on $O(\xi_0,\delta )$ ($\delta$ can be
decreased
 if necessary) such that
\begin{itemize}
\item[(1)] for $z\in O(\xi_0,\delta)$, $$N(\phi-\phi(z),A_r)=N(\phi-\phi(\xi_0),A_r)=N(\phi-\phi(\xi_0),\mathbb{T})=n_0;$$
\item[(2)] $\phi(\rho_i)=\phi,\, 0\leq i\leq n_0-1 $;
\item[(3)] $\rho_i(O(\xi_0,\delta))\subseteq A_r,\, 0\leq i \leq n_0-1 $.
\end{itemize}
In particular,   $\rho_0(\xi_0), \dots, \rho_{n_0-1}(\xi_0)$ are
exactly those
 $n_0$ zeros of $\phi-\phi(\xi_0)$ on $\mathbb{T}$.
Then by (1) $$n_0=N(\phi-\phi(\xi),A_r)\ge
N(\phi-\phi(\xi),\mathbb{T})=n_0, \, \xi\in
 O(\xi_0,\delta)\cap\mathbb{T},$$ forcing all zeros of $\phi-\phi(\xi)$ in
$A_r$ to fall onto $\mathbb{T}$.  Hence by Conditions (2) and (3) we
get
$$\rho_i(\xi)\in\mathbb{T},\, i=0,\dots,n_0-1 .$$
Hence  there exists a neighborhood of $\xi_0$ where  we have
$\rho_i=\rho_i^*$
 for \linebreak  $  i=0,\dots,n_0-1, $  as they are equal on some arc of $\mathbb{T}$.
\vskip2mm
By Lemma \ref{ac1} for each curve $\wp$ in $\mathbb{C} \setminus
\widetilde{Y}$ such that $\wp(0)=\xi_0,$
  each member in $\{\rho_i :i=0,\dots,n_0-1\}$ admits
analytic continuation along  $\wp$. We will see that the family
$\{\rho_i :i=0,\dots,n_0-1\}$ is closed under analytic continuation.
For this,
 assume that       $\gamma  $ is a loop  in
$\mathbb{C}   \setminus \widetilde{Y}$ with $\gamma (0)=\gamma
(1)=\xi_0$.
  Let $\widetilde{\rho_i}\,( 0\leq i \leq n_0-1)$
 be the analytic continuation of $\rho_i$ along $\gamma$.
Clearly, all these $\widetilde{\rho_i}$ are  local inverses of
$\phi$, i.e.
   $\phi (\widetilde{\rho_i})=\phi$.
  Since $\phi(\xi_0)\in\partial \phi(\mathbb{D})$, $$ \widetilde{\rho_i} (\xi_0)\not\in \mathbb{D}.$$
   Besides, we have $\rho_i=\rho_i^*$ on some neighborhood of $\xi_0$,
    and then $$\phi ( \rho_i)=\phi (\rho_i^*)=\phi.$$  Write $\gamma^*(t)=(\gamma(t))^*\,(t\in[0,1])$ and
    define   $\widetilde{\rho_i}^*$ along $\gamma^*.$
         Hence $$\phi(\widetilde{\rho_i}^*)=\phi, \, 0\leq i\leq n_0-1.$$
     By similar reasoning as above, $ \widetilde{\rho_i}^*(\xi_0)\notin\mathbb{D}$. Also noting
     $ \widetilde{\rho_i} (\xi_0)\notin\mathbb{D}$
     gives $\widetilde{\rho_i}(\xi_0)\in\mathbb{T}$.
    Then it follows that
    $\{\widetilde{\rho_i}(\xi_0) :i=0,\dots,n_0-1\}$ is a permutation of
    $\{ \rho_i  (\xi_0):i=0,\dots,n_0-1\}$.
If two local inverses are equal at one point $\xi_0\not\in
\phi^{-1}(\phi(Z'))$, by the Implicit Function Theorem they
  are equal on a neighborhood of this point. Thus we have
  $$\{\widetilde{\rho_i} :i=0,\dots,n_0-1\}=\{ \rho_i  :i=0,\dots,n_0-1\}.$$
 Give two curves $\gamma_1$ and $\gamma_2$ with $\gamma_1(0)= \gamma_2(0)=\xi_0$ and $\gamma_1(1)= \gamma_2(1)$,
 $\gamma_1 \gamma_2^{- } $ is a loop with endpoints $\xi_0$.
Therefore, we have  that analytic continuations of  the family
$\{\rho_i :i=0,\dots,n_0-1\}$
  along $\gamma_1$ are the same as those along  $\gamma_2$. Thus
analytic continuations of  the family  $\{\rho_i :i=0,\dots,n_0-1\}$
does not depend on the choice of the curve.
  Define
 \begin{equation}B(z)=\prod_{i=0}^{n_0-1} \widetilde{\rho_i}(z), z\in  \mathbb{C} \setminus  \widetilde{Y}, \label{def}
 \end{equation}
  where we use analytic continuations. In what  follows, we will show that $B$ extends analytically to a finite Blaschke product and
  there are two cases to distinguish: $$\phi\in \mathfrak{R}(\overline{\mathbb{D}}) \quad \mathrm{or} \quad
   \phi \in  \mathfrak{M}(\overline{\mathbb{D}}) \setminus \mathfrak{R}(\overline{\mathbb{D}}).$$
\vskip2mm
 \noindent \textbf{Case I.}   $\phi \in \mathfrak{R}(\overline{\mathbb{D}})$. Thus   $\phi$ is a rational function,
  and then
  $\widetilde{Y}$ is a finite set. Assume that the infinity $\infty$ is a pole of $\phi$, without loss of
generality. Otherwise, one can compose $\phi$ with
 some $\eta\in \mathrm{Aut}(\mathbb{D}) $ defined by
 $$\eta(z)= \frac{ \alpha-z}{1-\overline{\alpha} z},$$
mapping  $\infty$ to a pole $ 1/ \overline{\alpha} $ of $\phi$.
Replacing $\phi$ with $\phi(\eta)$ reduces to the desired case.
Since $\phi \in \mathfrak{R}(\overline{\mathbb{D}})$, there is a constant $C_1>1$ such that $\phi$ is holomorphic on some neighborhood of
$C_1\overline{\mathbb{D}}$. Let $$M=\max \{|\phi(z)|:|z|\leq C_1\}
<+\infty.$$ Since $\phi(\infty)=\infty$,  there exists a constant
$C_2>0$ satisfying
$$|\phi(z)|>M,\,  |z|>C_2.$$
For $z\in C_1\mathbb{D} \setminus  \widetilde{Y} ,$ we have
$$|\phi(\widetilde{\rho_i}(z) )|= |\phi(z)|\leq M, 0\leq i\leq
n_0-1,$$ and then each $\widetilde{\rho_i}(z) $ is bounded by $C_2$.
Hence $B$ is  an analytic function  bounded by $C_2^{n_0}$.
Therefore, $B$ extends analytically to $C_1\mathbb{D}$ since $
\widetilde{Y} $ is a finite set.  Besides,   all
$\widetilde{\rho_i}(z) $ are unimodular on the circular arc
$O(\xi_0,\delta )\cap \mathbb{T}$, and so is $B.$ By Corollary
\ref{arcarc} $B$ is unimodular on $\mathbb{T}$, and hence
 $B$ is  a finite Blaschke product \cite{Ru1}.
 \vskip2mm

\noindent \textbf{Case II.}  $\phi \in  \mathfrak{M}(\overline{\mathbb{D}}) \setminus
\mathfrak{R}(\overline{\mathbb{D}}),$ then $\phi$ is not a rational
function. By Lemma
 \ref{r15}
there exists a bounded domain $\Omega\supseteq\overline{\mathbb{D}}$
such that for $z\in \Omega\setminus \widetilde{Y}$, we have $$
\widetilde{\rho_i}(z)\in \overline{\Omega}, \, 0\leq i \leq n_0-1.$$
For these $\rho_i$, each  analytic continuation  along a curve in
$\Omega\setminus \widetilde{Y}$ is defined by a chain of disks, and
hence by (\ref{def})   $B$ extends naturally to an open set
$V$($V\subseteq \Omega$) containing $\Omega\setminus \widetilde{Y}$.
Since $\widetilde{Y}$ is countable, there is a relatively closed
countable set $Y_0$ such that
$$V=\Omega \setminus Y_0.$$
Since $\Omega\setminus\! \widetilde{Y}$ is dense in $\Omega\setminus\!
Y_0$, for $z\in \Omega\setminus
Y_0$  we have
 $$ \widetilde{\rho_i}(z)\in \overline{\Omega}, \, 0\leq i \leq n_0-1.$$
 Therefore, $B$ is a well-defined bounded analytic function on $\Omega\setminus Y_0.$
Since $Y_0$ is a countable relatively closed set in $\Omega,$  $Y_0$
is $H^\infty$-removable, and thus
 $B$ extends analytically on $\Omega $.
 In particular, $B$ is   analytic on a neighborhood of $\overline{\mathbb{D}}$.
Since each $\widetilde{\rho_i}(z) $ is  unimodular on the circular
arc $O(\xi_0,\delta )\cap \mathbb{T}$,   so is $B.$ By Corollary
\ref{arcarc} $B$ is unimodular on $\mathbb{T}$, forcing
 $B$  to be a finite Blaschke product.

 In both cases we have shown that  $B$ extends analytically to a finite Blaschke product.
  All local inverses of $B$
are  exactly    $\{\widetilde{\rho_i} :i=0,\dots,n_0-1\}$, and clearly, 
 order $B=n_0.$   By   Corollary \ref{uni}, each member $\rho$ in $G(\phi)$
 is uniquely determined by the value $\rho(\xi_0)$.  Thus
$$o(\phi)\leq N(\phi-\phi(\xi_0), \mathbb{T}).$$
Note that $\rho_0(\xi_0), \dots, \rho_{n_0-1}(\xi_0)$ are  all
   zeros of $\phi-\phi(\xi_0)$ on $\mathbb{T}$, and thus
\begin{equation}o(\phi) \leq n_0= \mathrm{order}\, B=o(B). \label{last} \end{equation}
On the other hand, $\{\widetilde{\rho_i} :i=0,\dots,n_0-1\}$ are
local inverses of $\phi$, and then $B(z)\mapsto \phi(z)$ is a
well-defined analytic function, denoted by $h.$
 Since $h$ is bounded on $C_1\mathbb{D}$ minus
a finite set where $C_1>1$,    $h$ extends to a function in
$H^\infty(\overline{\mathbb{D}}),$ and
$$\phi=h( B).$$ This gives
$G(\phi)\supseteq G(B)$. Noting (\ref{last}), we have
$o(\phi)=o(B)$. By Section 2,
   $$o (\phi)\geq  b( \phi )  \geq \mathrm{order}\, B
      =o(B)=o (\phi) .$$
      forcing $b(\phi)=o(\phi)=n_0 $.
\end{proof}

 \vskip2mm To establish Theorem   \ref{ratgroup2}, we also need the following.
\begin{cor} \label{minimal}For a nonconstant  function $\phi\in \mathfrak{M}(\overline{\mathbb{D}})$,
 except
for a countable set  each  point $\xi$ in $\mathbb{T}$
 satisfies $N(\phi-\phi(\xi),\mathbb{T})= b(\phi).$
Furthermore, $N(\phi)=b(\phi).$
\end{cor}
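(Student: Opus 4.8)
The plan is to leverage Proposition \ref{rat3}, which already gives $o(\phi)=b(\phi)$, together with the Cowen–Thomson representation $\phi = h(B)$ produced in its proof, where $B$ is a finite Blaschke product of order $n_0 = o(\phi) = b(\phi)$. First I would recall from the proof of Proposition \ref{rat3} that the point $\xi_0$ was chosen in $\mathbb{T}\setminus(\widetilde Y\cup S)$ with $\phi(\xi_0)\in\partial\phi(\mathbb{D})$, and that $N(\phi-\phi(\xi),\mathbb{T}) = n_0$ for all $\xi$ in an arc $O(\xi_0,\delta)\cap\mathbb{T}$ by \eqref{num}. So the value $b(\phi)$ is actually attained on an open arc. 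What remains is to upgrade ``attained on an arc'' to ``attained off a countable set'': since $b(\phi)\le N(\phi-\phi(\xi),\mathbb{T})$ always holds (noted in the introduction as $b(\phi)\le N(\phi)$ and more locally because $G(B)\subseteq$ the local-inverse structure forces at least $\mathrm{order}\,B$ preimages on $\mathbb{T}$ over a boundary point — more carefully, one argues via $\phi=h(B)$ that every fiber of $\phi$ on $\mathbb{T}$ has cardinality a multiple of... no), I must instead show the reverse inequality holds generically.

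The cleanest route: use the set $S$ from Lemma \ref{count} and the function $m(\xi)$. By Lemma \ref{count}, $S$ is countable, so for $\xi\notin S$ we have $m(\xi) = N(\phi-\phi(\xi),\mathbb{T})$. Now $m$ is, by its definition as a liminf of an integer-valued quantity, lower semicontinuous, hence the set where $m(\xi)\le n_0$ is... I want it to be everything. Here is the key point: the function $\xi\mapsto N(\phi-\phi(\xi),\mathbb{T})$ attains the value $n_0$ on the arc $O(\xi_0,\delta)\cap\mathbb{T}$, so $m(\eta)\le n_0$ for every $\eta$ in that arc, hence $m(\xi_0)\le n_0$; but in fact I claim $m$ is \emph{globally} bounded by $n_0$. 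To see this, note that for \emph{any} $\xi\in\mathbb{T}$ one can run the local-inverse/analytic-continuation machinery: the $n_0$ branches $\widetilde{\rho_i}$ extended in the proof of Proposition \ref{rat3} are defined (by analytic continuation along curves in $\mathbb{C}\setminus\widetilde Y$) near every point of $\mathbb{T}\setminus\widetilde Y$, they take values in $\mathbb{T}$ there (since $B$ is a finite Blaschke product and they are its local inverses, unimodular on $\mathbb{T}$), and they exhaust the solutions of $\phi(z)=\phi(\xi)$ on $\mathbb{T}$ because $\phi=h(B)$ together with $B$ being $n_0$-to-$1$ on $\mathbb{T}$ forces $N(\phi-\phi(\xi),\mathbb{T}) \le N(B-B(\xi),\mathbb{T}) = n_0$. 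Hence $N(\phi-\phi(\xi),\mathbb{T})\le n_0$ for every $\xi\in\mathbb{T}\setminus\widetilde Y$, a countable exceptional set. Combined with $N(\phi-\phi(\xi),\mathbb{T})\ge b(\phi)=n_0$ always, we get equality off $\widetilde Y\cup S$, a countable set.

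Finally, $N(\phi) = \min_{\xi}N(\phi-\phi(\xi),\mathbb{T}) = n_0 = b(\phi)$ follows immediately since the minimum is attained (indeed by almost every $\xi$) at the value $n_0$, and no smaller value occurs because $N(\phi-\phi(\xi),\mathbb{T})\ge b(\phi)$ for all $\xi$. I expect the main obstacle to be the careful bookkeeping in the claim $N(\phi-\phi(\xi),\mathbb{T})\le n_0$ for \emph{all} $\xi\in\mathbb{T}\setminus\widetilde Y$: one must check that the finitely many branches $\widetilde{\rho_i}$ constructed near $\xi_0$ really do continue to be well-defined and $\mathbb{T}$-valued near an arbitrary $\xi\in\mathbb{T}\setminus\widetilde Y$ and that their values there account for \emph{all} points of the fiber $\phi^{-1}(\phi(\xi))\cap\mathbb{T}$ — this is exactly where one invokes the identity $\phi=h(B)$ with $h$ and $B$ already in hand from Proposition \ref{rat3}, plus that $B$ restricted to $\mathbb{T}$ is an $n_0$-fold covering, so the $\phi$-fiber injects into the $B$-fiber and the latter has exactly $n_0$ points.
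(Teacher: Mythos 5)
There is a genuine gap at the key step. You write that $\phi = h(B)$ together with $B$ being $n_0$-to-$1$ on $\mathbb{T}$ forces $N(\phi-\phi(\xi),\mathbb{T})\le N(B-B(\xi),\mathbb{T}) = n_0$, because ``the $\phi$-fiber injects into the $B$-fiber.'' This inequality goes in the wrong direction. From $\phi = h(B)$ we get the \emph{containment}
\[
\{w\in\mathbb{T}: B(w)=B(\xi)\}\;\subseteq\;\{w\in\mathbb{T}: \phi(w)=\phi(\xi)\},
\]
which yields $N(\phi-\phi(\xi),\mathbb{T})\ge n_0$, not $\le n_0$. Equality of the two fibers would require that $B(w)\in\mathbb{T}$, $h(B(w))=h(B(\xi))$ imply $B(w)=B(\xi)$, i.e.\ that $h$ has no self-intersection on $\mathbb{T}$ at the point $h(B(\xi))$. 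That property of $h=\psi$ is precisely what Theorem \ref{FSI2} asserts — but the paper's proof of Theorem \ref{FSI2} \emph{uses} Corollary \ref{minimal}, so invoking it here would be circular. And without some FSI-type control on $h$, the inequality you need is simply false in general: a single self-intersection of $h$ at $h(B(\xi))$ produces an entire extra $B$-fiber in $\phi^{-1}(\phi(\xi))\cap\mathbb{T}$, pushing the count to $2n_0$ or more.

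The paper avoids this by not reducing to $h$ at all. Instead it exploits the special point $\xi_0\in\mathbb{T}\setminus(\widetilde Y\cup S)$ with $\phi(\xi_0)\in\partial\phi(\mathbb{D})$ from the proof of Proposition \ref{rat3} and equation \eqref{num} (so $N(\phi-\phi(\xi_0),\mathbb{T})=n_0$). One sets $A=\{\xi\in\mathbb{T}: N(\phi-\phi(\xi),\mathbb{T})>n_0\}$, supposes $A$ uncountable, and picks an accumulation point $\eta_0\in A\setminus(\widetilde Y\cup S)$ with $l=N(\phi-\phi(\eta_0),\mathbb{T})>n_0$. Repeating the local-inverse construction at $\eta_0$ gives $l$ branches mapping an arc of $\mathbb{T}$ into $\mathbb{T}$; continuing these along a curve in $\mathbb{C}\setminus(\widetilde Y\cup S)$ from $\eta_0$ to $\xi_0$, and using $\phi(\xi_0)\in\partial\phi(\mathbb{D})$ to see that both $\widetilde\tau_i(\xi_0)$ and $\widetilde\tau_i^*(\xi_0)$ avoid $\mathbb{D}$, forces all $l$ continued values at $\xi_0$ to lie on $\mathbb{T}$, producing $l>n_0$ distinct boundary zeros of $\phi-\phi(\xi_0)$ and contradicting \eqref{num}. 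This ``transport to the good point $\xi_0$'' is the idea missing from your sketch; the $h$-fiber comparison alone cannot supply the upper bound.
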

\begin{proof} By Proposition \ref{rat3}, we
write
$$n_0= b(\phi)=o(\phi).$$
By Corollary \ref{div}  we have
$$N(\phi-\phi(\xi),\mathbb{T})\geq n_0 ,\, \xi\in \mathbb{T}.$$
Write
$$A=\{ \xi\in\mathbb{T}: N(\phi-\phi(\xi),\mathbb{T})> n_0\},$$and it suffices to show that $A$ is countable.
Assume conversely that      $A$ is uncountable. Since $A$ contains
uncountable accumulation points in itself, one can pick an
accumulation  point $\eta_0$ in   $ \mathbb{T}\setminus\!
(\widetilde{Y}\cup S)$.
  Write  $$l =N(\phi-\phi(\eta_0),\mathbb{T})   >n_0.$$

 In the first paragraph of the proof of Proposition
 \ref{rat3},
 by replacing $\xi_0$  with $\eta_0$
 we get
   $l$ local inverses on some neighborhood of $\eta_0$,
    which maps an arc in $\mathbb{T}$   into $\mathbb{T}$.
              Let $\gamma$ be a curve  in $\mathbb{C}\setminus  (\widetilde{Y}\cup S)$  connecting  $\eta_0$ and $\xi_0$, and
     these $l$ local inverses admit analytic continuations along $\gamma$,
     denoted   by $\widetilde{\tau_0}, \dots,\widetilde{ \tau_{l-1}}$.
      Also $\widetilde{\tau_0}^*, \dots, \widetilde{\tau_{l-1}}^*$
      are exactly analytic continuations  along $\gamma^*$ of the local inverses
      $ \tau_0^*, \dots,  \tau_{l-1}^*$ at $\eta_0$.
       For   $0\leq i\leq l-1$, neither $\widetilde{\tau_i}(\xi_0)$ nor $\widetilde{\tau_i}^*(\xi_0)$
        belongs to $\mathbb{D}$ as $\phi(\xi_0) \in \partial \phi(\mathbb{D})$.
Therefore  $\{\widetilde{\tau_i}(\xi_0):0\leq i\leq l-1 \}$ are $l$
distinct zeros of $\phi-\phi(\xi_0)$ on $\mathbb{T}$. But by
  (\ref{num}),
$$N(\phi-\phi(\xi_0),\mathbb{T})=n_0< l,$$ which derives a
contradiction. Hence $A $ is countable, as desired.  \end{proof}

Now we proceed to present the proof of Theorem \ref{ratgroup2}.
\vskip2mm \noindent \textbf{Proof of Theorem \ref{ratgroup2}.}
Suppose that $\phi$ is a nonconstant meromorphic function in
$\mathbb{C}$ without pole on $\overline{\mathbb{D}}.$ By Proposition
\ref{rat3} and Corollary \ref{minimal}, we have
 $$
N(\phi)=b(\phi)=o(\phi) .$$ It remains to show that
$$n(\phi)=b(\phi).$$
Recall that $$n(\phi)=\min_{z\in\mathbb{D},\phi(z)\not\in
\phi(\mathbb{T})}\mathrm{wind}\,(\phi(\mathbb{T}),\phi(z)).$$ By
Corollary \ref{div} $o(\phi)\leq n(\phi).$ Since $b(\phi)|o(\phi)$,
  $b(\phi)\leq n(\phi)$.
  It remains
to prove that
 $$n(\phi)\leq   b(\phi).$$
Recall that $Z'$ is the zero of $\phi$, and let $F=\phi^{-1}(\phi(Z'\cap \overline{\mathbb{D}})).$  By
Corollary \ref{minimal} there is a  point $w_0\in
\mathbb{T}\setminus F$ such that
    $\phi(w_0)\in\partial \phi(\mathbb{D}) $ and
   $$N(\phi-\phi(w_0),\mathbb{T})=b(\phi)  .$$ Since the zeros of $\phi$ are isolated in $\mathbb{C}$,
    there
    exists a positive constant   $ t>1 $  satisfying
    $$N(\phi-\phi(w_0),t \mathbb{D})=N(\phi-\phi(w_0),\overline{\mathbb{D}}) = b(\phi).$$
     By Rouch$\mathrm{\acute{e}}$'s Theorem, there is a positive number $\delta$
      such that $$N(\phi-\phi(z),t\mathbb{D})=N(\phi-\phi(w_0), t\mathbb{D})=b(\phi),\,z\in O(w_0,\delta). $$
Let $z_0$ be a point   in $O(w_0,\delta)\cap\mathbb{D} $  such that
$\phi(z_0)\not\in \phi(\mathbb{T})$, and by
 Argument Principle we get
  $$\mathrm{wind}\,(\phi(\mathbb{T}),\phi(z_0))=N(\phi-\phi(z_0),\mathbb{D})\leq N(\phi-\phi(z_0), t\mathbb{D})= b(\phi) .$$
  Thus $n(\phi)\leq   b(\phi)$, forcing $n(\phi)=b(\phi).$
This finishes the proof of
 Theorem \ref{ratgroup2}. $\hfill \square$

\section{FSI and FSI-decomposable  properties}
~~~~In this section it is shown that each nonconstant function in
$\mathfrak{M}(\overline{\mathbb{D}})$
 has   FSI-decomposable property. Based on this,
  the proof of Theorem  \ref{rational} is furnished.

\subsection{Proof of Theorem \ref{rational}}
~~~~Recall that $\mathfrak{M}( \overline{\mathbb{D}} )$ denotes the
class of all  meromorphic functions in $\mathbb{C}$ without pole on
$\overline{\mathbb{D}}$. One  main aim of this section is to prove
the following.
\begin{thm} For a nonconstant function $\phi\in \mathfrak{M}( \overline{\mathbb{D}} ),$
suppose  $\phi=\psi( B)$ is the  Cowen-Thomson representation of $\phi$. \label{FSI2}
Then  $\psi$ has FSI
property.
\end{thm}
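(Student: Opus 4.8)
The plan is to show that the Cowen-Thomson factor $\psi$ in $\phi=\psi(B)$ inherits good self-intersection behavior from $\phi$, using the fact (Theorem \ref{ratgroup2}) that for meromorphic symbols all four invariants coincide. First I would observe that since $\phi\in\mathfrak{M}(\overline{\mathbb{D}})$ and $B$ is a finite Blaschke product, $\psi$ is holomorphic on a neighborhood of $\overline{\mathbb{D}}$; indeed $B$ maps a neighborhood of $\overline{\mathbb{D}}$ onto a neighborhood of $\overline{\mathbb{D}}$ (properly), and away from critical values of $B$ one can locally invert $B$ and write $\psi=\phi\circ B^{-1}$, which is bounded near $\overline{\mathbb{D}}$ and hence extends; so $\psi\in H^\infty(\overline{\mathbb{D}})$ (this is already recorded in Corollary \ref{cor}, which I may cite). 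Next I would note $b(\psi)=1$: if $\psi=\widetilde\psi(\widetilde B)$ with $\mathrm{order}\,\widetilde B\geq 2$, then $\phi=\widetilde\psi(\widetilde B\circ B)$ with $\mathrm{order}(\widetilde B\circ B)>\mathrm{order}\,B=b(\phi)$, a contradiction. Hence $b(\psi)=1$, and since $\psi\in\mathfrak{M}(\overline{\mathbb{D}})$ (it is even holomorphic on $\overline{\mathbb{D}}$, so certainly meromorphic on $\mathbb C$ with no pole there — though one should be slightly careful: $\psi$ need not extend to all of $\mathbb C$, so I would instead invoke the results proved for $H^\infty(\overline{\mathbb D})$ functions rather than the full meromorphic machinery).

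The cleaner route: I would apply Corollary \ref{minimal} not to $\phi$ directly but argue through $\psi$. Since $b(\psi)=1$, I want $N(\psi)=1$ and more: that $N(\psi-\psi(\xi),\mathbb{T})=1$ for all but finitely many $\xi\in\mathbb{T}$ — that is exactly FSI property. Corollary \ref{minimal} gives this for functions in $\mathfrak{M}(\overline{\mathbb{D}})$, but with "countable" rather than "finite" exceptional set, and $\psi$ may not be globally meromorphic. So the real work is to upgrade to a finite exceptional set and to handle that $\psi$ is only in $H^\infty(\overline{\mathbb{D}})$. For finiteness, I would use that $\psi$ is holomorphic on a neighborhood of $\overline{\mathbb{D}}$: the set of $\xi\in\mathbb{T}$ with $N(\psi-\psi(\xi),\mathbb{T})\geq 2$ is governed by the analytic set where two local inverses of $\psi$ coincide, or by $\psi(\xi)$ lying in the finite set of critical values of $\psi|_{\text{nbhd of }\overline{\mathbb D}}$ together with genuine boundary self-intersections; a compactness/analyticity argument on the annulus $A_r$ shows that the "exceptional" $\xi$ form a real-analytic subvariety of $\mathbb{T}$, hence either finite or all of $\mathbb{T}$ — and the latter is excluded because $N(\psi)=b(\psi)=1$ forces $N(\psi-\psi(\xi),\mathbb T)=1$ somewhere, in fact on an open arc.

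Concretely, I would fix $\xi_0\in\mathbb{T}$ realizing $N(\psi-\psi(\xi_0),\mathbb{T})=1$ (exists by Corollary \ref{minimal} applied to $\psi$, or by $N(\psi)=1$) with $\psi'(\xi_0)\ne 0$ and $\psi(\xi_0)\in\partial\psi(\mathbb D)$; then by Rouché on a thin annulus $A_r$, for $\xi$ near $\xi_0$ the count $N(\psi-\psi(\xi),A_r)$ stays $1$, so $N(\psi-\psi(\xi),\mathbb T)\le 1$ on an arc. For a general $\xi$ with $N(\psi-\psi(\xi),\mathbb T)=m\ge 2$, one has $m$ local inverses $\rho_0,\dots,\rho_{m-1}$ of $\psi$ near $\xi$ mapping an arc of $\mathbb T$ into $\mathbb T$ (via Corollary \ref{arcarc}); continuing these analytically along a curve avoiding $\widetilde Y\cup S$ to $\xi_0$ and using that $\psi(\xi_0)\in\partial\psi(\mathbb D)$ together with the $*$-symmetry argument from the proof of Corollary \ref{minimal}, I get $m$ distinct boundary zeros of $\psi-\psi(\xi_0)$, contradicting $N(\psi-\psi(\xi_0),\mathbb T)=1$ — unless $\xi$ lies in the exceptional countable set $\widetilde Y\cup S$ where the continuation argument breaks. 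So the $\xi$ with $N(\psi-\psi(\xi),\mathbb T)\ge 2$ are contained in $S\cup(\widetilde Y\cap\mathbb T)$, which is countable. To get \emph{finite}: since $\psi$ is holomorphic on a fixed neighborhood of $\overline{\mathbb D}$, the sets $\phi^{-1}(\psi(Z'))\cap\mathbb T$ (critical-value preimages) and $\mathbb{T}\cap\psi^{-1}(\text{finite bad set})$ are finite, and $S$ itself (Lemma \ref{count} bounds it countable, but for $\psi$ smooth on a neighborhood the refined Rouché argument of Lemma \ref{count} actually yields $S$ finite), so the exceptional set is finite.

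\textbf{Main obstacle.} The delicate point is the last upgrade from "countable exceptional set" to "finite exceptional set": Lemma \ref{count} and Corollary \ref{minimal} as stated only give countability, and I must exploit that $\psi$ is holomorphic on a full neighborhood of $\overline{\mathbb D}$ (not merely on $\overline{\mathbb D}$) to rerun those arguments with real-analyticity replacing mere continuity, concluding the bad set is a proper real-analytic subset of $\mathbb T$ and hence finite. I would also need to be careful that $\psi$, though not necessarily a global meromorphic function on $\mathbb C$, still satisfies all the lemmas of Section 3 that only used holomorphy on a neighborhood of $\overline{\mathbb D}$ (Corollary \ref{arcarc}, Lemma \ref{ext}, Lemma \ref{ac1}), which is the case; the only place $\mathbb C$-meromorphy was genuinely used (Lemmas \ref{value}, \ref{r15}) is avoided because $\psi$ is \emph{bounded holomorphic} near $\overline{\mathbb D}$, which plays the role Case I (rational) played in Proposition \ref{rat3}.
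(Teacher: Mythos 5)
Your overall structure --- establish $b(\psi)=1$, obtain a countable exceptional set, then upgrade to finite via a dichotomy --- matches the paper's, and the $b(\psi)=1$ argument is exactly the paper's. You also correctly flag a subtlety in applying Corollary \ref{minimal} directly to $\psi$, since that corollary is stated for $\mathfrak{M}(\overline{\mathbb D})$ and $\psi$ is a priori only known to lie in $H^\infty(\overline{\mathbb D})$ at this point; in the paper this is justified a posteriori by Lemma \ref{rab} and Theorem \ref{mero} (which show $\psi$ is in fact in $\mathfrak{R}(\overline{\mathbb D})$ or $\mathfrak{M}(\overline{\mathbb D})$), and can also be circumvented by pushing the countable bad set for $\phi$ forward through the $n$-to-$1$ boundary map $B|_{\mathbb T}$.

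However, you do not identify the paper's actual tool for the countable-to-finite upgrade, which is Lemma \ref{ncount}: the set of points of self-intersection of the curve $\psi(z)$ ($z\in\mathbb T$) has cardinality either finite or $\aleph$. The paper then simply intersects this dichotomy with the countability furnished by Corollary \ref{minimal}. Your proposed substitute --- that the bad set $\{\xi\in\mathbb T: N(\psi-\psi(\xi),\mathbb T)\ge 2\}$ is a real-analytic subvariety of $\mathbb T$, hence finite or all of $\mathbb T$ --- has a genuine gap: that set is the projection of $\{(\xi,\eta)\in\mathbb T\times\mathbb T:\psi(\xi)=\psi(\eta)\}\setminus\Delta$ onto the first factor, and projections of real-analytic sets need not be real-analytic (only subanalytic), so the ``finite or all of $\mathbb T$'' conclusion does not follow from analyticity alone. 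One could repair this using subanalytic stratification of one-dimensional subanalytic sets, but the paper avoids that machinery entirely and proves the cardinality dichotomy directly: around an accumulation point of self-intersections it extracts, via a passage to $n_0$-th roots and the Schwarz-reflection argument of Lemma \ref{real}, a nontrivial local inverse of $\psi$ carrying an arc of $\mathbb T$ into $\mathbb T$, which yields a continuum of self-intersections. You have correctly located the main obstacle, but your proposed resolution of it is not valid as stated and the key lemma (Lemma \ref{ncount}) that closes the gap is missing from your plan.
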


 Later, by using Theorem \ref{FSI2} one will get
Theorem \ref{rational}, restated as follows.
\begin{thm} Suppose $\phi$ is a nonconstant function in $\mathfrak{M}(\overline{\mathbb{D}})$.
 The following are equivalent:
  \label{rational2}
\begin{itemize}
\item[(1)]the Toeplitz operator $T_\phi$ is  totally Abelian;
\item[(2)]   $\phi$ has FSI property.
\item[(3)]   $N(\phi)=1.$
\end{itemize}
\end{thm}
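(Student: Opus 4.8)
The plan is to deduce Theorem~\ref{rational2} from two ingredients already in hand: the chain of equalities $n(\phi)=b(\phi)=o(\phi)=N(\phi)$ of Theorem~\ref{ratgroup2}, and the FSI-decomposability statement Theorem~\ref{FSI2}. I would prove the two equivalences $(1)\Leftrightarrow(3)$ and $(2)\Leftrightarrow(3)$ separately rather than running a cycle of implications.

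For $(1)\Leftrightarrow(3)$: a nonconstant $\phi\in\mathfrak{M}(\overline{\mathbb{D}})$ is holomorphic on $\overline{\mathbb{D}}$, hence lies in $H^\infty(\overline{\mathbb{D}})\subseteq\mathcal{CT}(\mathbb{D})$, so by the discussion following Corollary~\ref{cor} the operator $T_\phi$ is totally Abelian exactly when $b(\phi)=1$. Since Theorem~\ref{ratgroup2} gives $b(\phi)=N(\phi)$, both $(1)$ and $(3)$ are equivalent to the single equality $N(\phi)=1$.

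For $(2)\Rightarrow(3)$: if $\phi$ has FSI property, then $N(\phi-\phi(\xi),\mathbb{T})=1$ for all $\xi$ outside a finite set, so in particular for at least one $\xi$; combined with the trivial bound $N(\phi-\phi(\eta),\mathbb{T})\ge 1$ for every $\eta\in\mathbb{T}$ (the point $\eta$ itself being a zero of $\phi-\phi(\eta)$), this yields $N(\phi)=\min_\eta N(\phi-\phi(\eta),\mathbb{T})=1$. For $(3)\Rightarrow(2)$: assuming $N(\phi)=1$, Theorem~\ref{ratgroup2} gives $b(\phi)=1$, hence in a Cowen-Thomson representation $\phi=\psi(B)$ the finite Blaschke product $B$ has order $b(\phi)=1$ and so is a Moebius automorphism of $\mathbb{D}$. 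Among the Moebius-equivalent valid choices we may take $B(z)=z$ and $\psi=\phi$, which is still a Cowen-Thomson representation, and Theorem~\ref{FSI2} then says $\psi=\phi$ has FSI property. (Alternatively, keeping a general degree-one $B=\eta$: since $\eta$ maps $\mathbb{T}$ bijectively onto $\mathbb{T}$, one has $N(\phi-\phi(\xi),\mathbb{T})=N(\psi-\psi(\eta(\xi)),\mathbb{T})$ for every $\xi$, so the FSI property of $\psi$ transfers verbatim to $\phi$.)

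I expect the main obstacle to lie entirely inside Theorem~\ref{FSI2}, not in the derivation above: Corollary~\ref{minimal} already gives $N(\phi-\phi(\xi),\mathbb{T})=b(\phi)$ off a \emph{countable} subset of $\mathbb{T}$, whereas the FSI property demands that this exceptional set be \emph{finite}. Upgrading the countable exceptional set to a finite one — equivalently, showing that the self-intersection multiplicity of the symbol curve $\phi(\mathbb{T})$ can jump at only finitely many points — is the substantive step, and it is carried out via the analytic-continuation and properness machinery already developed for $\mathfrak{M}(\overline{\mathbb{D}})$ in Lemmas~\ref{ac1}, \ref{r15} and Corollary~\ref{minimal}; granting Theorem~\ref{FSI2}, the present statement is immediate.
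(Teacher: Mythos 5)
Your proof is correct and follows essentially the same route as the paper's: both rest on Theorem~\ref{FSI2} for FSI-decomposability and on the equality $b(\phi)=N(\phi)$ (Theorem~\ref{ratgroup2}/Corollary~\ref{minimal}), together with the standard fact that $T_\phi$ is totally Abelian iff $b(\phi)=1$. The paper organizes the implications as a cycle $(2)\Rightarrow(3)\Rightarrow(1)\Rightarrow(2)$ and extracts $\mathrm{order}\,B=1$ from a short commutant chain $\{T_z\}'=\{T_\phi\}'\supseteq\{T_B\}'\supseteq\{T_z\}'$, whereas you split into two equivalences and quote $b(\phi)=1$ directly, but the substance — including the observation that a degree-one $B$ lets the FSI property of $\psi$ transfer to $\phi$ — is the same.
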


Recall that a point $\lambda$ in $\mathbb{C}$ is called a point   of
self-intersection of the curve $\phi(z)(z\in \mathbb{T})$  \cite{Qu}
 if there exist two distinct points  $w_1$ and $w_2$ on $\mathbb{T}$ such that
 $$\phi(w_1)=\phi(w_2)=\lambda; $$
 equivalently, $N(\phi-\lambda,\mathbb{T})>1.$
To prove Theorem \ref{FSI2}, we need the following.
\begin{lem} Suppose  $\phi\in H^\infty(\overline{\mathbb{D}})$.  \label{ncount}
Then the cardinality of points of self-intersections of the curve
$\phi(z)(z\in \mathbb{T})$  is either finite or $\aleph,$ the
continuum.
\end{lem}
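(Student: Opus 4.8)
Write $\Sigma\subseteq\mathbb C$ for the set of points of self-intersection of $\phi(\mathbb T)$, i.e.\ $\lambda\in\Sigma$ iff $\phi-\lambda$ has two distinct zeros on $\mathbb T$. Since $\Sigma\subseteq\mathbb C$ we have $|\Sigma|\le\aleph$ for free, and a nondegenerate continuum in $\mathbb C$ has cardinality exactly $\aleph$; so the whole statement reduces to showing that \emph{if $\Sigma$ is infinite, then $\Sigma$ contains a nondegenerate continuum}. We may assume $\phi$ nonconstant, since otherwise $\Sigma$ is a single point.

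The plan is to manufacture such a continuum from an accumulation point. Put $\mathbb T'=\phi^{-1}(\Sigma)\cap\mathbb T$; each fibre $\phi^{-1}(\lambda)\cap\mathbb T$ is finite and nonempty for $\lambda\in\Sigma$, so $\mathbb T'$ is infinite and $\phi(\mathbb T')=\Sigma$. Choose an accumulation point $\xi_0=e^{i\alpha}$ of $\mathbb T'$, points $\xi_k\in\mathbb T'$ with $\xi_k\ne\xi_0$, $\xi_k\to\xi_0$, and for each $k$ a ``partner'' $\xi_k'\in\mathbb T$ with $\xi_k'\ne\xi_k$ and $\phi(\xi_k')=\phi(\xi_k)$; after passing to a subsequence, $\xi_k'\to\xi_0'=e^{i\beta}$ with $\phi(\xi_0')=\phi(\xi_0)=:\lambda_0$. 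I then distinguish two cases. If $\xi_0'=\xi_0$, then $\phi$ identifies the distinct points $\xi_k,\xi_k'$ arbitrarily near $\xi_0$, so $\phi$ is non-injective on every neighbourhood of $\xi_0$ and $\phi'(\xi_0)=0$; writing $\phi-\lambda_0=g^{\,n}$ near $\xi_0$ with $g$ univalent, $n\ge2$, $g(\xi_0)=0$, the identity $g(\xi_k')^n=g(\xi_k)^n$ gives $g(\xi_k')=\omega_k\,g(\xi_k)$ with $\omega_k^n=1$, $\omega_k\ne1$, and by pigeonhole some $\omega\ne1$ recurs infinitely often. Then $\rho:=g^{-1}\!\circ(\omega\,g)$ is a holomorphic local inverse of $\phi$ on a full neighbourhood of $\xi_0$, $\rho\not\equiv\mathrm{id}$, with $\rho(\xi_k)=\xi_k'\in\mathbb T$ and $\xi_k\to\xi_0$ in $\mathbb T$; Corollary \ref{arcarc} forces $\rho(O(\xi_0,\delta)\cap\mathbb T)\subseteq\mathbb T$ for some $\delta>0$. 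Discarding the discrete sets where $\rho(z)=z$ or $\phi$ is not locally injective, we obtain a subarc $I$ of $O(\xi_0,\delta)\cap\mathbb T$ on which $\phi$ is injective and $\rho$ is $\mathbb T$-valued with $\rho(z)\ne z$; then $\phi(z)=\phi(\rho(z))$ with $z\ne\rho(z)$ on $\mathbb T$ shows $\phi(I)\subseteq\Sigma$, and $\phi(I)$ is a nondegenerate arc.

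If instead $\xi_0'\ne\xi_0$, I compare the real-analytic arcs $\gamma_1(t)=\phi(e^{i(\alpha+t)})$ and $\gamma_2(s)=\phi(e^{i(\beta+s)})$, which pass through $\lambda_0$ and share the infinitely many points $\phi(\xi_k)=\phi(\xi_k')\to\lambda_0$. The key claim is that two real-analytic arcs meeting in infinitely many points near a common point must overlap on a whole subarc: the zero set of $F(t,s)=\gamma_1(t)-\gamma_2(s)$ has $(0,0)$ as a non-isolated point, so by the curve selection lemma it contains a nonconstant real-analytic arc $u\mapsto(t(u),s(u))$ through $(0,0)$; the map $u\mapsto\gamma_1(t(u))$ cannot be constant — that would force $t(u)$ and $s(u)$ into the finite set $\phi^{-1}(\lambda_0)\cap\mathbb T$, hence be constant, contradicting nonconstancy of the arc — so for $u$ small its image is a nondegenerate continuum contained in $\phi(O(\xi_0,\delta)\cap\mathbb T)\cap\phi(O(\xi_0',\delta)\cap\mathbb T)$; as these two arcs of $\mathbb T$ are disjoint, every such point lies in $\Sigma$. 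In either case $\Sigma$ contains a nondegenerate continuum, which by the first paragraph has cardinality $\aleph$; and if $\Sigma$ is not infinite it is finite, so the dichotomy follows.

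The delicate point, and what I expect to be the main obstacle, is exactly the overlap claim in the second case: when both $\xi_0$ and $\xi_0'$ are zeros of $\phi'$, the local inverse carrying one arc onto the other is ramified (multivalued) at $\xi_0$, so the Schwarz-reflection mechanism behind Lemma \ref{real} and Corollary \ref{arcarc} does not apply verbatim and one genuinely needs the curve-selection (Puiseux) argument. An equivalent and perhaps cleaner packaging of the whole proof avoids the case split: on an annulus $A_r$ on which $\phi$ is holomorphic, the closure $Z'$ of $\{(z,w):\phi(z)=\phi(w),\ z\ne w\}$ is a complex-analytic curve, so $W:=Z'\cap(\mathbb T\times\mathbb T)$ is a compact real-analytic subset of the $2$-torus and $\Sigma=\{\phi(z):(z,w)\in W,\ z\ne w\}$; by the structure of real-analytic sets $W$ is either finite — whence $\Sigma$ is finite — or contains a nonconstant real-analytic arc — whence $\Sigma$ contains a nondegenerate continuum, i.e.\ $|\Sigma|=\aleph$.
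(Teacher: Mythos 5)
Your argument is correct, and the dichotomy (either $\Sigma$ is finite or it contains a nondegenerate continuum) is exactly what the paper proves; but your route in the second case is genuinely different. The paper never splits on whether $\xi_0=\eta_0$: it passes to the upper half-plane, analytically continues a branch $\sigma_0$ of the local inverse around the four quarter-disks $D_0,\dots,D_3$, and packages the resulting chain of function elements into a single holomorphic function $\omega(z)=\sigma_i(z^{n_0}+x_0)$ on a full disk via the branched-covering lift $z\mapsto z^{n_0}$; Lemma~\ref{real} then applies to $\omega$ and shows $\sigma_0$ carries an arc of $\mathbb{R}$ into $\mathbb{R}$. This lift is precisely what desingularizes the ramification when $\xi_0\ne\eta_0$ and both are critical points of $\phi$, so your remark that ``the Schwarz-reflection mechanism does not apply verbatim and one genuinely needs the curve-selection (Puiseux) argument'' is not quite right: the paper's explicit lift is an elementary, self-contained substitute for curve selection. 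Your Case~1 is close in spirit to the paper but cleaner — since $\xi_0'=\xi_0$ you can write $\phi-\lambda_0=g^n$ on a \emph{full} neighbourhood, produce the single-valued local inverse $\rho=g^{-1}\circ(\omega g)$, and invoke Corollary~\ref{arcarc} directly without any lift. Your Case~2 replaces the paper's lift by the curve selection lemma for real-analytic sets (applied to $F(t,s)=\gamma_1(t)-\gamma_2(s)$); this is a heavier external input but the argument is sound, and the ``nonconstant image'' check via finiteness of $\phi^{-1}(\lambda_0)\cap\mathbb{T}$ is exactly what is needed. The closing packaging via $W=Z'\cap(\mathbb{T}\times\mathbb{T})$ and the structure theory of compact real-analytic sets is an attractive unification, though you leave implicit the verification that the off-diagonal locus is a proper analytic subset (which holds because $R(z,z)=\phi'(z)\not\equiv0$), so that a nonconstant arc in $W$ can be taken off the diagonal with nonconstant $\phi$-image. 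In short: same destination, Case~1 by a parallel (if streamlined) road, Case~2 by an appeal to semi-analytic geometry where the paper stays within classical one-variable complex analysis.
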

\begin{proof} Suppose  $\phi\in H^\infty(\overline{\mathbb{D}})$. Denote the
set of all  points of self-intersection of  $\phi(\mathbb{T})$ by
$A$. If $A$ is finite,   the proof is finished.

  Assume
$A$ is an infinite set.  Then    $\phi^{-1}(A)\cap \mathbb{T}$ must
have an accumulation point
  $\xi_0$ on  $\mathbb{T}$. By the definition of points of self-intersection,  there is  a sequence
$\{\xi_k\} $ in $\mathbb{T}\setminus \{\xi_0\}$ and a sequence
$\{\eta_k\} $ in $\mathbb{T}$  such that \linebreak $\xi_k\to\xi_0\,
(k\to\infty)$, and
$$\phi(\xi_k)=\phi(\eta_k),\, \xi_k\neq\eta_k, \forall k.$$  Without loss of generality, one
assumes that $\{\eta_k\}$ itself converges to a point $\eta_0$ on
$\mathbb{T}$. Thus we have
  $$\phi(\xi_0)=\phi(\eta_0)\equiv\lambda_0.$$
Note that $\xi_0$ may be equal to $\eta_0$.

 Since $\phi $ is not constant,
by Lemma \ref{ext}
   there are two simply-connected
neighborhoods $U$ of $\xi_0$, $V$ of $\eta_0$ and a positive number
$\varepsilon$ such that $$\phi(U)=\phi(V)=\varepsilon\mathbb{D}
+\lambda_0,$$ and $\phi|_U,\phi|_V$ are holomorphic proper maps,
whose multiplicities are equal to the multiplicities of  zero of
$\phi-\lambda_0$ at   $\xi_0$ and $\eta_0$ respectively.
  Write $$\widehat{U}=U\setminus \{\xi_0\} \mathrm{\quad} \mathrm{and}
\mathrm{\quad}  \widehat{V}=V\setminus \{\eta_0\}.$$
  Let $N$ be the
multiplicity of the zero of $\phi-\lambda_0$ at the point $\eta_0$.  By
Lemma \ref{ext}, for each $z\in\widehat{U}$ we have the following:
\begin{itemize}
\item[(1)] there exist exactly $N$ distinct local inverses of $\phi$ on a connected neighborhood
$U_z $ of $z$ with values in $\widehat{V}$ and
$U_z\subseteq\widehat{U}$ ;
\item[(2)] each local inverse in (1) admits  analytic continuation along any curve
in $\widehat{U}$  starting from the point $z$.
\end{itemize}
Note that analytic continuation of a local inverse of $\phi$ in (1)
is also a local inverse, with values in $\widehat{V}$.

The following discussions are based on  the upper half plane $\prod$
rather than on the unit disk, and this will be more convenient.
  Let $\varphi$  be  a   Moebius
transformation   mapping  $\mathbb{D}$ onto  $\prod$,    its pole
being distinct from $\xi_0$ and $\eta_0$. Rewrite
$$x_k=\varphi(\xi_k) \quad \mathrm{and} \quad y_k=\varphi(\eta_k),\, k \geq  0.$$
Here by no means we indicate  that $x_k$ and $y_k$  are the real or
imaginary part of some complex number. Let $\delta $  be a   positive number
such that  \linebreak$\overline{O(x_0,\delta)}\subseteq\varphi(U)$,
and we define  four simply connected domains: $$D_0=\{z\in
O(x_0,\delta):Re(z-x_0)>0\},\, D_1=\{z\in
O(x_0,\delta):Im(z-x_0)>0\};$$
$$D_2=\{z\in O(x_0,\delta):Re(z-x_0)<0\},\, D_3=\{z\in
O(x_0,\delta):Im(z-x_0)<0\}.$$ Note that $D_1,D_2$ and  $D_3$ can be
obtained by a rotation of $D_1$. Since $\varphi^{-1}(D_i)$ is simply
connected for $ i=0,1,2,3 $, by (1) and (2) we get
  $N$   local inverses of $\phi$ with values in
$\widehat{V}$; and by the Monodromy Theorem these local inverse are
all analytic on $\varphi^{-1}(D_i)$ for fixed  $i$.
 Let $$\widetilde{\phi}=\phi\circ\varphi^{-1},$$ and we
  obtain $N$  local inverses of $\widetilde{\phi}$, which are analytic on  each domain
$D_i  $ for  $i=0,1,2,3$.  With no loss of generality,  assume there
are infinitely many  points of $\{x_k\}$ lying  in $D_0$. Then there
exists at least one local inverse $\sigma_0$ of $\widetilde{\phi}$
defined on $D_0$ so that  $\sigma_0$ maps $x_k$ to $y_k$, for
infinitely many
$k$. Define $$D_{4j+i}=D_i, \,  0\leq i\leq 3,\, j\in \mathbb{Z}_+.$$ 
Take  analytic continuations $(\sigma_i,D_i)(i=1,\cdots,4N)$ of
$(\sigma_0,D_0)$ along the chain $\{D_0,D_1,\dots,D_{4 N}\}$. Note
that $$D_0=D_4=D_8=\cdots,$$ and there are only finitely many
distinct local inverses of $\widetilde{\phi}$ on $D_0.$
 There must be a minimal positive integer $n_0\le N$ satisfying
$\sigma_{4n_0}=\sigma_0$. As follows,  we will use   function
elements $(\sigma_i,D_i)(i=0,\dots,4n_0-1)$ to construct a
holomorphic function on a disk $D.$ Precisely, write $D=
O(0,\sqrt[n_0]{\delta})$, and for $z\in D\setminus \{0\}$ define
$$\omega(z)=\begin{cases}
\sigma_0(z^{n_0}+x_0), & 0\le \arg z<\frac{\pi}{2n_0};\\
\sigma_1(z^{n_0}+x_0), & \frac{\pi}{2n_0}\le \arg z<\frac{\pi}{n_0};\\
\dots\\
\sigma_{4n_0-1}(z^{n_0}+x_0), & \frac{\pi}{2n_0}(4n_0-1)\le \arg
z<2\pi.\end{cases}$$ Then $\omega$ is
well-defined and holomorphic in $D\setminus \{0\}$. Observe that as
$z$ tends to $x_0$ in $D_i(i=0,\dots,4n_0-1)$, each $\sigma_i(z)$
tends to $y_0$. Therefore   $\omega$ is  bounded  near $0$, and
hence $0$ is a removable singularity of $\omega$. By setting
$\omega(0)=y_0$ we get a holomorphic function $\omega$ on $D$.

\vskip1mm

Since $\omega|_{D\cap\mathbb{R}^+}(x)=\sigma_0(x^{n_0}+x_0)$, and
$\sigma_0(x_k)=y_k$ holds for infinitely many $k$, we have
$\omega(\sqrt[n_0]{x_k-x_0})=y_k\in\mathbb{R}$ as   $x_k>x_0$. By
Lemma \ref{real},
 $$\omega(D\cap\mathbb{R})\subseteq\mathbb{R},$$ forcing
$\sigma_0(D_0\cap\mathbb{R})\subseteq\mathbb{R}$. Letting
$$\gamma=\varphi^{-1}(D_0\cap\mathbb{R})\subseteq \mathbb{T} ,$$ and

$$\widetilde{\sigma}_0(w)=\varphi^{-1}\circ\sigma_0\circ\varphi(w),
w\in \varphi^{-1}(D_0)\subseteq U, $$ we have
$\widetilde{\sigma}_0(\gamma)\subseteq\mathbb{T}$. Clearly
$\sigma_0$ is not  the identity map, and neither is
$\widetilde{\sigma}_0$. Let $$W=\{z\in\varphi^{-1}(D_0)\cap
\mathbb{T}: \widetilde{\sigma}_0(z)=z\},$$  and $W$ is at most
countable. Since the cardinality of $\phi(\gamma)$ is $\aleph,$ so
is $\phi(\gamma  \setminus W)$, finishing the proof of Lemma
\ref{ncount}.
\end{proof}

Now we are ready to give  the proof of Theorem \ref{FSI2}. \vskip2mm
\noindent  \textbf{Proof of Theorem \ref{FSI2}.}
  Suppose  that $\phi$ is a  nonconstant
   function in $\mathfrak{M}(\overline{\mathbb{D}}),$ and $\phi=\psi(B)$ is the Cowen-Thomson
    representation.   Corollary \ref{cor} says that $\psi$ is in $H^\infty(\overline{\mathbb{D}}).$
    By comments below Theorem \ref{Tm1}, $B$ is of maximal order and thus
  $\psi$ can not be written as a function of a
 finite Blaschke product of order lager than $1$. Again by Corollary \ref{cor} we have   $b(\psi)=1$.
    Corollary \ref{minimal} implies that
  $$   \{w\in \mathbb{T}:N(\psi-\psi(w),\mathbb{T})>1\}  $$
  is countable, as well as $\{\psi(w)\in \mathbb{T}:N(\psi-\psi(w),\mathbb{T})>1\}.$ But by Lemma  \ref{ncount}
   the cardinality of self-intersections of $\psi(z)(z\in \mathbb{T})$  is a natural number.  Thus
  $\psi$ has FSI property  as desired. $\hfill \square$
\vskip2mm
 We are ready to give the proof of Theorem \ref{rational2} (=Theorem \ref{rational}).
\vskip2mm \noindent \textbf{Proof of Theorem \ref{rational2}.} Note
(2) $\Rightarrow$ (3) is trivial. To show
  (3) $\Rightarrow$ (1),  assume $N(\phi)=1$. By Corollary \ref{minimal} we have
  $b(\phi)=N(\phi)=1$. Then   Theorem \ref{Tm1} gives that $T_\phi$ is  totally Abelian.

\vskip1mm
For (1) $\Rightarrow$ (2), let $\phi=\psi ( B) $ be a Cowen-Thomson representation. Then  by  Theorem \ref{FSI2}
  $\psi$ has FSI
property. Since $T_\phi$ is
totally Abelian,  $$\{T_z\}'= \{T_\phi\}' \supseteq \{T_B\}'
\supseteq \{T_z\}'.$$ Then $\{T_B\}'=
 \{T_z\}'$, forcing order $B=1.$ Since $\phi=\psi ( B) $, $\phi$  has FSI property as desire.
 The proof of Theorem
  \ref{rational2} is complete.
  $\hfill \square$

\vskip2mm It is straightforward to get  equivalent formulations for
(1)-(3) in Theorem
  \ref{rational2}: (4)
there is a point $\xi\in\mathbb{T}$ satisfying
  $ N(\phi-\phi(\xi),\mathbb{T})=1$; and (5)
    except for a countable or finite set every point $\xi\in\mathbb{T}$ satisfies
    $N(\phi-\phi(\xi),\mathbb{T})=1$.
 \vskip2mm
   Theorem \ref{FSI2} shows that each function $\phi$ in $\mathfrak{M}(\overline{\mathbb{D}}) $ has
   FSI-decomposable property; that is,
   for the  Cowen-Thomson
    representation $\phi=\psi(B)$, $\psi$ has FSI-property. In fact, we will see  that $\psi$ has quite special form
     (see Lemma \ref{rab}  and Theorem \ref{mero}).

\vskip2mm

  Recall that $\mathfrak{R}(\overline{\mathbb{D}}) $ consists of all rational functions which have
   no pole  on $\overline{\mathbb{D}}$. If $P$ and $Q$ are two co-prime polynomials,
  order $ \frac{P}{Q}$ is defined to be $\max\,\{\deg P,\deg Q\}$.   The following  is of
  independent interest.
\begin{lem}\label{rab} If $f$ is   in  $\mathfrak{R}(\overline{\mathbb{D}})$ and there is a function
$h$ on $  \mathbb{D}  $ such that
$$f=h( B),$$ where $B$ is a finite Blaschke product, then
$h$ is    $\mathfrak{R}(\overline{\mathbb{D}})$. In this case, we
have $\mathrm{order}\,f=\mathrm{order}\,   h \, \times
\mathrm{order} \,B $.
\end{lem}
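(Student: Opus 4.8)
The plan is to treat $f$ and $B$ as rational self-maps of the Riemann sphere $\hat{\mathbb{C}}=\mathbb{C}\cup\{\infty\}$ and to upgrade the identity $f=h(B)$, which is only assumed on $\mathbb{D}$, to a factorization valid on all of $\hat{\mathbb{C}}$; this will force $h$ to be rational, and the order formula will then be an immediate consequence of multiplicativity of the degree. Write $n=\mathrm{order}\,B=\deg B$. The two facts I would use about a finite Blaschke product are: $B(1/\overline{z})=1/\overline{B(z)}$, so $B$ maps $\mathbb{D}$, $\mathbb{T}$ and $\hat{\mathbb{C}}\setminus\overline{\mathbb{D}}$ properly and with degree $n$ onto themselves, and in particular $B^{-1}(\mathbb{D})=\mathbb{D}$; and, since $f\in\mathfrak{R}(\overline{\mathbb{D}})$, every pole of $f$ lies in $\hat{\mathbb{C}}\setminus\overline{\mathbb{D}}$.

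The first and main step is the \emph{factoring claim}: if $B(z_1)=B(z_2)$ and neither $z_1$ nor $z_2$ is a pole of $f$, then $f(z_1)=f(z_2)$. To prove it I would consider the algebraic curve $\mathcal{C}=\{(z_1,z_2)\in\hat{\mathbb{C}}^2:B(z_1)=B(z_2)\}$ and the rational function $G(z_1,z_2)=f(z_1)-f(z_2)$. On $\mathcal{C}\cap(\mathbb{D}\times\mathbb{D})$ one has $G\equiv 0$, because if $z_1,z_2\in\mathbb{D}$ and $B(z_1)=B(z_2)=w\in\mathbb{D}$, then $f(z_1)=h(w)=f(z_2)$ by hypothesis. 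Now let $C_0$ be any irreducible component of $\mathcal{C}$. The projection $(z_1,z_2)\mapsto z_1$ is nonconstant on $C_0$ (a constant value $c$ would force $C_0\subseteq\{c\}\times\{z:B(z)=B(c)\}$, a finite set, not a curve), hence surjective onto $\hat{\mathbb{C}}$; pick $(a,b)\in C_0$ with $a\in\mathbb{D}$. Then $B(b)=B(a)\in\mathbb{D}$, so $b\in B^{-1}(\mathbb{D})=\mathbb{D}$, whence $(a,b)\in C_0\cap(\mathbb{D}\times\mathbb{D})$. Since $f$ has no pole on $\mathbb{D}$, $G$ is holomorphic near $(a,b)$ along $C_0$ and vanishes on the nonempty relatively open set $C_0\cap(\mathbb{D}\times\mathbb{D})$; irreducibility of $C_0$ then gives $G\equiv 0$ on $C_0$. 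Running over all components yields $G\equiv 0$ on $\mathcal{C}$.

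With the factoring claim in hand I would construct $h$ on all of $\hat{\mathbb{C}}$. Let $E=B\big(\{\text{critical points of }B\}\cup\{\text{poles of }f\}\big)$, a finite set. For $w\notin E$ choose any $z\in B^{-1}(w)$ and set $\widetilde{h}(w)=f(z)$; the factoring claim makes this independent of the choice of $z$, and using a local holomorphic branch of $B^{-1}$ shows $\widetilde{h}$ is holomorphic on $\hat{\mathbb{C}}\setminus E$. At each point of $E$ the isolated singularity of $\widetilde{h}$ is either removable (if $\widetilde{h}$ stays bounded nearby) or a pole (if some preimage branch runs into a pole of $f$), so $\widetilde{h}$ is meromorphic on $\hat{\mathbb{C}}$, i.e. rational. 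Its poles lie in $B(\{\text{poles of }f\})\subseteq B(\hat{\mathbb{C}}\setminus\overline{\mathbb{D}})=\hat{\mathbb{C}}\setminus\overline{\mathbb{D}}$, hence $\widetilde{h}\in\mathfrak{R}(\overline{\mathbb{D}})$. Since $B|_{\mathbb{D}}$ is onto $\mathbb{D}$ and both $\widetilde{h}$ and the given $h$ satisfy $f=(\,\cdot\,)\circ B$ on $\mathbb{D}$, they agree on $\mathbb{D}$, so $h$ coincides with the restriction of a function in $\mathfrak{R}(\overline{\mathbb{D}})$. Finally, viewing $f=h\circ B$ as an identity of rational maps of $\hat{\mathbb{C}}$ and using that the order of a rational function equals its degree as a branched covering, multiplicativity of degree gives $\mathrm{order}\,f=\deg f=\deg h\cdot\deg B=\mathrm{order}\,h\cdot\mathrm{order}\,B$.

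I expect the factoring claim to be the real obstacle: the hypothesis only provides $f=h(B)$ on $\mathbb{D}$, and one must propagate this to all of $\hat{\mathbb{C}}$. The device that makes it work is $B^{-1}(\mathbb{D})=\mathbb{D}$, which forces every irreducible component of the incidence curve $\mathcal{C}$ to already meet $\mathbb{D}\times\mathbb{D}$, the region where $G$ is known to vanish; combined with the fact that $f$ has no pole on $\overline{\mathbb{D}}$, the identity theorem on an irreducible curve finishes it. A secondary point needing care is verifying that the pushforward $\widetilde{h}$ is genuinely meromorphic — not merely holomorphic off a finite set — at the branch values of $B$ and at the images of the poles of $f$.
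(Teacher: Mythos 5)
Your proof is correct, but it takes a genuinely different route from the paper's. The paper propagates the identity $f(z)=f(\rho(z))$ (for local inverses $\rho$ of $B$) from $\mathbb{D}$ to all of $\mathbb{C}$ by analytic continuation: it invokes Bochner's theorem for the critical set, notes that each local inverse $\rho$ is analytic across $\mathbb{T}$ and satisfies $\rho^*=\rho$ there, and uses this reflection symmetry to obtain unrestricted continuation of the $\rho_j$ on $\mathbb{C}$ minus a finite set, after which $B(z)\mapsto f(z)$ is well defined globally. You instead bypass analytic continuation entirely by working with the incidence curve $\mathcal{C}=\{B(z_1)=B(z_2)\}\subset\hat{\mathbb{C}}^2$ and the identity theorem for meromorphic functions on an irreducible projective curve; your use of $B^{-1}(\mathbb{D})=\mathbb{D}$ to force every irreducible component of $\mathcal{C}$ to meet $\mathbb{D}\times\mathbb{D}$ is the exact counterpart of the paper's use of $\rho^*=\rho$ on $\mathbb{T}$ — both are the reflection symmetry of a Blaschke product in disguise. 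Your globalization is cleaner in that it avoids any bookkeeping of continuation paths or of the exceptional set $F_1$, at the cost of invoking a bit of algebraic-geometric machinery (irreducible components, surjectivity of nonconstant maps from projective curves, identity theorem on the normalization). The subsequent construction of $\widetilde h$ by pushforward, the analysis of the isolated singularities at $E$, the localization of the poles of $\widetilde h$ outside $\overline{\mathbb{D}}$, and the degree-multiplicativity conclusion all match the paper's in substance. One point worth making explicit in your write-up (you flag it at the end but do not prove it): at a point $w_0\in E$ the limit of $\widetilde h$ along all branches must agree, so either every preimage of $w_0$ is a pole of $f$ or none is — this consistency, which follows from the factoring claim by a limiting argument, is what makes the removable/pole dichotomy at $E$ well posed.
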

\begin{proof} Suppose $f$ is a  function in $\mathfrak{R}(\overline{\mathbb{D}}) $ and   $h$ is a function on $  \mathbb{D}  $ satisfying
$$f=h( B),$$ where $B$ is a finite Blaschke product.
Let $n$=order $B$, and denote    $n$ local inverses of $B$ by
$\rho_0,\cdots,\rho_{n-1} $. Let $Z'$ denote the zero set of $B'$ in
$\mathbb{C}$, and by Bochner's
Theorem \cite{Wa} $Z'$ is a finite subset of $\mathbb{D}$.
Write $$ \mathcal{E}=B^{-1}(B(Z')).$$ It is known
that all local inverses admit unrestricted continuation in
$\overline{\mathbb{D}} \setminus \mathcal{E} .$ For each $ j (0\leq
j\leq n-1)$, define $\rho_j^*(z)= (\rho_j(z^*))^*, $ which admits
unrestricted continuation on $\mathbb{C} \setminus
\overline{\mathbb{D}}$ minus a finite set. Recall that the
derivative $B'$ of $B$ does not vanish on $\mathbb{T} $, $\rho_j $
is analytic on $\mathbb{T}$ and $\rho_j^*=\rho_j$ on $\mathbb{T}$.
Thus each $\rho_j$ admits unrestricted continuation on $\mathbb{C}$
minus a finite set, say $F_1$.

For each $z\in  \mathbb{D} ,$  by  $h (B(z))=f(z) $ we get
\begin{equation}f(z)=f(\rho_j(z)),\,  0\leq j\leq n-1.\label{equal0}\end{equation}
By analytic continuation, the above also holds for all $z\in
\mathbb{C} \setminus\!\! F_1.$ Let $P$ denote the poles of $f$,   a
finite set in $\mathbb{C}$. Then by (\ref{equal0})
  $B(z)\mapsto f(z)$ defines a holomorphic function $h$ on
$\mathbb{C} \setminus\!\! (B(F_1)\cup P )$. Thus on the complex
plane minus discrete points, we have
\begin{equation}f=h ( B). \label{equal}\end{equation}
If $f$ is a rational function, then its only possible isolated
singularities (including $\infty$) are poles. By (\ref{equal}), $h$
has at most finitely many singularities including $ \infty $,
 which are either removable singularity or
poles. Hence $h$ is a rational function. Since $f$ is holomorphic on
$\overline{\mathbb{D}}$, by (\ref{equal}) $h$ is bounded on a
neighborhood of $\overline{\mathbb{D}}$ with finitely many
singularities possible. Thus $h$ extends analytically on
$\overline{\mathbb{D}}$, forcing $h\in
\mathfrak{R}(\overline{\mathbb{D}})$.

\vskip1mm
Suppose $f$ is a rational function. Noting that $f$ can be written as
the quotient of two co-prime polynomials, by computations we have that $f$ is a
covering map   on $\mathbb{C} \setminus ( f^{-1}(f(\infty))\cup
\mathcal{E}),$ and the multiplicity is exactly
  order $f$. Since both $h$ and $B$ are rational functions, they can be regarded as covering maps on $\mathbb{C}$ minus
 some finite set. This  leads to the conclusion that $$\mathrm{order}
  \,  f=\, \mathrm{order}  \,  h  \times \, \mathrm{order}  \, B,$$
  to  complete  the proof.
\end{proof}
By Theorem \ref{FSI2} and Lemma \ref{rab}  we get the following.
\begin{cor}\label{48} Suppose  $R$ is a rational function in $\mathfrak{R}(\overline{\mathbb{D}})$ with prime order.
 Then  either $R$  is  a composition of a Moebius
 transformation and a finite Blaschke product, or $R$  has   FSI property. In the  later case, ¡¡$T_R$  is totally Abelian.
\end{cor}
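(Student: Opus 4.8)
The plan is to run the Cowen--Thomson machinery and then exploit primality of the order. First I would take a Cowen--Thomson representation $R=\psi(B)$ as in Theorem \ref{Tm1}, where $B$ is a finite Blaschke product; since $R\in\mathfrak{R}(\overline{\mathbb{D}})\subseteq H^\infty(\overline{\mathbb{D}})$, Corollary \ref{cor} gives $\psi\in H^\infty(\overline{\mathbb{D}})$. Because $R$ is rational, Lemma \ref{rab} applies to the identity $R=\psi(B)$ and yields both $\psi\in\mathfrak{R}(\overline{\mathbb{D}})$ and the key multiplicativity
$$\mathrm{order}\,R=\mathrm{order}\,\psi\times\mathrm{order}\,B.$$
Since $\mathrm{order}\,R$ is prime, there are exactly two possibilities: either $\mathrm{order}\,B=\mathrm{order}\,R$ and $\mathrm{order}\,\psi=1$, or $\mathrm{order}\,B=1$.

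In the first case, $\psi$ is an element of $\mathfrak{R}(\overline{\mathbb{D}})$ of order $1$, that is $\psi(w)=\frac{aw+b}{cw+d}$ with $ad-bc\neq 0$ (coprimality of numerator and denominator of degrees at most one), which is a Moebius transformation; hence $R=\psi(B)$ is a composition of a Moebius transformation and a finite Blaschke product. In the second case, $B$ is a finite Blaschke product of order $1$, i.e.\ a Moebius automorphism of $\mathbb{D}$, so $B$ maps $\mathbb{T}$ bijectively onto $\mathbb{T}$ and $R(\mathbb{T})$, $\psi(\mathbb{T})$ carry the same self-intersection data. By Theorem \ref{FSI2}, $\psi$ has FSI property, hence so does $R$; and since $b(R)=\mathrm{order}\,B=1$ we get $\{T_R\}'=\{T_B\}'=\{T_z\}'$, so $T_R$ is totally Abelian (equivalently, invoke the implication (2)$\Rightarrow$(1) of Theorem \ref{rational2}). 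This covers both alternatives and finishes the proof.

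The argument is short precisely because the substantive content is already in Lemma \ref{rab} (multiplicativity of orders under composition for rational symbols) and Theorem \ref{FSI2}; the only genuinely new ingredient is that primality of $\mathrm{order}\,R$ forces one of the two factors in the composition to be trivial. The main point that deserves care is the identification of an order-$1$ member of $\mathfrak{R}(\overline{\mathbb{D}})$ with a Moebius transformation, which is immediate from writing it as a quotient of coprime polynomials of degree at most one; everything else is bookkeeping with results already established in the excerpt.
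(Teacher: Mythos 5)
Your proof is correct and fills in exactly the argument the paper intends: apply Lemma \ref{rab} to the Cowen--Thomson representation $R=\psi(B)$ to get $\mathrm{order}\,R=\mathrm{order}\,\psi\cdot\mathrm{order}\,B$ with $\psi\in\mathfrak{R}(\overline{\mathbb{D}})$, use primality to force one factor to be trivial, and invoke Theorem \ref{FSI2} (plus the fact that an order-one $B$ is a bijection of $\mathbb{T}$) in the second alternative. This is the same route the paper indicates when it says the corollary follows from Theorem \ref{FSI2} and Lemma \ref{rab}.
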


Below we come to  functions in $
\mathfrak{M}(\overline{\mathbb{D}})\setminus
\mathfrak{R}(\overline{\mathbb{D}})  $.    The main theorem in \cite{BDU} says that each entire function $\phi$ has the Cowen-Thomson
    representation $\phi(z)=\psi(z^n)$ for some entire function $\psi$ and some integer $n$.  The following theorem   generalizes  the theorem in
\cite{BDU} to functions in $
\mathfrak{M}(\overline{\mathbb{D}})\setminus
\mathfrak{R}(\overline{\mathbb{D}})  $,  and is of independent interest.
\begin{thm} \label{mero} Suppose that $f\in\mathfrak{M}(\overline{\mathbb{D}})$  is not a rational function. Then
there is a positive integer $n$ and a function  $h  $ in
$\mathfrak{M}(\overline{\mathbb{D}})$ such that
  $$f(z)=h(z^n)$$ and $\{T_f\}'=\{T_{z^n}\}'$. Furthermore,
 $ n= o(f)=b(f)=n(f)=N(f).$ \end{thm}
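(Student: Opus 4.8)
The plan is to identify precisely the finite Blaschke product in a Cowen--Thomson representation of $f$ and then read everything off from it. I would first set $n=b(f)$; by Theorem \ref{ratgroup2} one already has $n=o(f)=n(f)=N(f)$, so what remains is only to produce a representation $f(z)=h(z^n)$ with $h\in\mathfrak{M}(\overline{\mathbb{D}})$ and to check $\{T_f\}'=\{T_{z^n}\}'$. Take a Cowen--Thomson representation $f=\psi(B)$, so that $\mathrm{order}\,B=b(f)=n$, $\{T_f\}'=\{T_B\}'$, and $\psi\in H^{\infty}(\overline{\mathbb{D}})$ by Corollary \ref{cor} (hence $f=\psi\circ B$ holds on a neighbourhood of $\overline{\mathbb{D}}$). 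The whole statement follows once we show that $B(z)=\eta(z^n)$ for some $\eta\in\mathrm{Aut}(\mathbb{D})$: indeed then $\{T_f\}'=\{T_B\}'=\{T_{z^n}\}'$ by the uniqueness of the Cowen--Thomson representation modulo M\"obius maps (the discussion following Theorem \ref{Tm1}), and $f=\psi(\eta(z^n))=h(z^n)$ with $h:=\psi\circ\eta\in H^{\infty}(\overline{\mathbb{D}})$.

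The heart of the argument is to push $\psi$ through the rational map $B$ over the whole Riemann sphere $\widehat{\mathbb{C}}=\mathbb{C}\cup\{\infty\}$. I would view $B$ as a degree-$n$ holomorphic self-map of $\widehat{\mathbb{C}}$; since $B$ is a finite Blaschke product, $B^{-1}(\mathbb{D})=\mathbb{D}$, so for $w\in\mathbb{D}$ the whole fibre $B^{-1}(w)$ lies in $\mathbb{D}$ and, by $f=\psi\circ B$ there, $f$ is constant on it. Then on the connected set obtained from $\widehat{\mathbb{C}}$ by deleting the finitely many critical values of $B$ together with the point $v_{0}:=B(\infty)$, the function $w\mapsto\sum_{i<j}\bigl(f(z_{i}(w))-f(z_{j}(w))\bigr)^{2}$, where $z_{1}(w),\dots,z_{n}(w)$ are the local branches of $B^{-1}$, is well defined and single-valued, being symmetric in the $z_{i}$, and meromorphic, and it vanishes on the nonempty open set $\mathbb{D}$ minus the critical values; by the identity theorem it vanishes identically. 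Hence $f$ is constant on every fibre of $B$, so $\psi(w):=f(z)$ for any $z\in B^{-1}(w)\cap\mathbb{C}$ is well defined and agrees with the given $\psi$. Composing the meromorphic $f$ with local holomorphic branches of $B^{-1}$, and using the fibre-constancy together with the fact that such a $\psi$ is manifestly a single-valued function of $w$ (so an isolated singularity is removable or a pole once $\psi$ is bounded, or blows up, nearby), shows that $\psi$ extends to a meromorphic function on all of $\widehat{\mathbb{C}}$ except possibly at the single point $v_{0}$, and that it extends meromorphically across $v_{0}$ as well whenever $B^{-1}(v_{0})$ contains a point of $\mathbb{C}$.

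Here the hypothesis that $f$ is not rational is used. If $B^{-1}(v_{0})$ contained a finite point, then $\psi$ would be meromorphic on the whole sphere, hence rational, and so $f=\psi\circ B$ would be rational --- a contradiction. Therefore $B^{-1}(v_{0})=\{\infty\}$, i.e.\ $\infty$ is a preimage of $v_{0}=B(\infty)$ of full multiplicity $n$. Because the involution $z\mapsto1/\bar z$ of $\widehat{\mathbb{C}}$ (with $0\leftrightarrow\infty$) commutes with $B$ (Blaschke symmetry $B(1/\bar z)=1/\overline{B(z)}$), this is equivalent to $B^{-1}(B(0))=\{0\}$ with multiplicity $n$, i.e.\ $B-B(0)$ vanishes to order $n$ at $0$, i.e.\ $B'$ vanishes to order $n-1$ at $0$. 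Post-composing $B$ with the disk automorphism carrying $B(0)$ to $0$ then yields a degree-$n$ Blaschke product with a zero of order $n$ at the origin, which must be a unimodular constant times $z^{n}$; hence $B(z)=\eta(z^{n})$ with $\eta\in\mathrm{Aut}(\mathbb{D})$, as needed.

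Finally, to upgrade $h=\psi\circ\eta$ from $H^{\infty}(\overline{\mathbb{D}})$ to $\mathfrak{M}(\overline{\mathbb{D}})$: from $f=h(z^{n})$ near $\overline{\mathbb{D}}$ and the fact that $f$ is meromorphic on all of $\mathbb{C}$, analytic continuation gives $f(\zeta z)=f(z)$ on $\mathbb{C}$ for every $n$-th root of unity $\zeta$, so $w\mapsto f(w^{1/n})$ is a single-valued function on $\mathbb{C}$, holomorphic at $0$ (only the powers $z^{kn}$ occur in the Taylor series of $f$) and meromorphic at every other point (compose $f$ with a local branch of $w^{1/n}$), with no poles on $\overline{\mathbb{D}}$; this function is $h$, so $h\in\mathfrak{M}(\overline{\mathbb{D}})$. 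And $n=o(f)=b(f)=n(f)=N(f)$ by Theorem \ref{ratgroup2}. I expect the main obstacle to be the second paragraph --- rigorously establishing the global fibre-constancy of $f$ along $B$ and the resulting meromorphic extension of $\psi$ to $\widehat{\mathbb{C}}$ away from the single point $v_{0}=B(\infty)$ --- together with the bookkeeping in the third paragraph that identifies ``$\infty$ totally ramified over $B(\infty)$'' with ``$B$ equals $z^{n}$ up to a post-composed M\"obius map''.
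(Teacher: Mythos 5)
Your overall route is genuinely different from the paper's and in places cleaner: you work on $\widehat{\mathbb{C}}$, show $\psi$ extends meromorphically to $\widehat{\mathbb{C}}\setminus\{B(\infty)\}$, deduce that a finite point in $B^{-1}(B(\infty))$ would make $\psi$ rational (and hence $f$ rational), and then use the Blaschke symmetry $B(1/\bar z)=1/\overline{B(z)}$ to identify the totally-ramified $B$ with $\eta(z^n)$. The paper instead uses analytic continuation of the local inverses $\rho_j$ of $B$ (via Bochner's theorem), defines $h:B(z)\mapsto f(z)$ on $\mathbb{C}$ minus a finite set, proves its own version of ``totally ramified at $\infty$ forces $B=\eta(z^n)$'', and then rules out $\mathrm{order}(B,\infty)<n$ by a two-case contradiction ($P$ finite yields an essential singularity at a finite point; $P$ infinite yields poles clustering at a finite point). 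Your single ``$\psi$ would be rational'' observation subsumes both cases.

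There is, however, a genuine gap in the key fibre-constancy step. You form the single-valued meromorphic function $\sigma(w)=\sum_{i<j}\bigl(f(z_i(w))-f(z_j(w))\bigr)^2$, show $\sigma\equiv 0$, and conclude that $f$ is constant on every fibre of $B$. That last inference fails for $n\ge 3$: over $\mathbb{C}$ a sum of squares can vanish without each summand vanishing (for instance $a_1=1+\omega$, $a_2=1$, $a_3=0$ with $\omega=e^{2\pi i/3}$ satisfies $\sum_{i<j}(a_i-a_j)^2=0$ but the $a_i$ are distinct), so $\sigma\equiv 0$ does not yield $f(z_i(w))=f(z_j(w))$. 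The idea is salvageable by using all the elementary symmetric functions rather than one quadratic: the coefficients of $P_w(t)=\prod_{i=1}^n\bigl(t-f(z_i(w))\bigr)$ are single-valued meromorphic functions of $w$ equal on $\mathbb{D}$ to those of $(t-\psi(w))^n$, so by the identity theorem $P_w(t)=\bigl(t-\tfrac1n e_1(w)\bigr)^n$ everywhere, which forces all $f(z_i(w))$ to coincide. (Alternatively one can argue, as the paper does, by analytically continuing the identity $f=f\circ\rho_j$.) With that repair, the remainder of your argument — the rationality contradiction, the total ramification at $\infty$, the upgrade of $h$ to $\mathfrak{M}(\overline{\mathbb{D}})$, and the invocation of Theorem~\ref{ratgroup2} for $n=o(f)=b(f)=n(f)=N(f)$ — is sound.
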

\begin{proof} To prove Theorem  \ref{mero}, we begin with an
observation from complex analysis. By Lemma \ref{ext} and comments
above it, for a  function $\varphi$ holomorphic on a neighborhood of
$\lambda$, let $k=\mathrm{order}\, (\varphi,\lambda),$
 the multiplicity of  the zero  of $\varphi-\varphi(\lambda)$ at $\lambda$. Then there is a Jordan neighborhood $W$
of $\lambda$ such that
 $\varphi |_{W }$ is a $k$-to-1 proper map onto  a neighborhood of $\varphi(\lambda)$.
 This is right even if $\lambda=\infty$ or $\lambda$ is a pole of
 $\varphi $ (for $\lambda=\infty$, $ \mathrm{order}\, (\varphi,\infty)$$=\mathrm{order}\, (\varphi(1/z),0)$).

Based on this, we will show that if $B_0$
 is a finite Blaschke product of order $k$, and   order
 $(B_0,\infty)$ equals $k$, then $B_0$ is a function of
 $z^k.$ In fact, either $B_0(\infty)=\infty $ or $|B_0(\infty)|>1. $
If $|B_0(\infty)|>1,$  by letting $$ \psi (z)=
\frac{1/\overline{B_0(\infty)}-z }{1- z/B_0(\infty) }$$ we have
$\psi\circ B_0(\infty)=\infty.$ Then we can assume
$B_0(\infty)=\infty . $ Note that \linebreak  order  $(1/
B_0(1/z),0)=$ order $(B_0,\infty)=k.$ Write
$$B_0(z)= c\prod_{j=1}^k
\frac{\alpha_j-z}{1-\overline{\alpha_j}z},\, $$ where $ |c|=1$ and
$\alpha_j\in \mathbb{D}$ for all $j, $ and then
$$1/ B_0(1/z) = \overline{c} \prod_{j=1}^k \frac{\overline{\alpha_j}-z}{1-\alpha_jz}. $$
Since order  $(1/ B_0(1/z),0)=k$, $\alpha_j=0$ for all $j$. Then
$B_0$ is a function of $z^k.$ This fact will be used later.

Suppose that $f\in\mathfrak{M}(\overline{\mathbb{D}})$ is not a
rational function. By Theorem \ref{Tm1} there is   a function $h \in
H^\infty( \mathbb{D})$
  and a finite Blaschke product $B$ such that
  $$f(z)=h( B (z)), z\in \mathbb{D},$$
  and $\{T_f\}'=\{T_{B}\}'$. Without loss of generality, assume order $B=n\geq 2.$
 In the proof of Lemma \ref{rab} we have shown that there is a finite set $F_1$ such that
  each local inverse  $\rho_j $ of $B$
 admits unrestricted continuation on $\mathbb{C}\setminus F_1.$
 Let $P$ denote the poles of $f$.
By $B( \rho_j)=B$ on $\mathbb{C}\setminus F_1 $ and $f(z)=h(
B (z))$, we can define a holomorphic function
$$h(z):B(z) \mapsto f(z)$$
on $\mathbb{C}\setminus\!\! (B(F_1)\cup P )$. So $h$ has only
isolated singularities. Letting
$$F_0=\overline{F_1\cup B^{-1}( P )},$$ we have
\begin{equation}f(z)=h(B(z)), z\in \mathbb{C}\setminus F_0 \label{equal2}\end{equation}
If   order $(B,\infty)=n,$ then by the second paragraph $B$ is a
function of $z^n$, and hence
 $$\{T_f\}'=\{T_{B}\}'=\{T_{z^n}\}'.$$ By similar reasoning as (\ref{equal2}), there is a function
$\widetilde{h}$ such that $f(z)=\widetilde{h}(z^n)$ holds on
$\mathbb{C}$ minus a discrete set. In this case, it is
straightforward to show $\widetilde{h}$ is in
$\mathfrak{M}(\overline{\mathbb{D}})$. By Theorem \ref{ratgroup2},
we have $ n= o(f)=b(f)=n(f)=N(f) $ to complete the proof.

As follows, we assume that   order $(B,\infty)<n $  to reach a
contradiction. Since order $(B,\infty)<n $ and $B $ is an $n$-to-$1$
map,  we have   a point $a\in \mathbb{C}$, two neighborhoods
$\mathcal{N}_1$ of $a$ and $\mathcal{N}_2$ of $\infty$ such that $B(a)=B(\infty)$,
$B|_{\mathcal{N}_1}$  and $B|_{\mathcal{N}_2}$ are proper maps,
 and their images are equal. There are two cases
to distinguish: either $P$ is a finite set or $P$ is an infinite
set.
\vskip2mm
\noindent \textbf{Case I.}
  $P$ is a finite set. Then $F_0$ is a finite set. By
(\ref{equal2}), $f$ has similar behaviors at $a$ and at $\infty.$
Since $f$ is a meromorphic function and not a rational function,
$\infty$ is an essential singularity of $f$, and so is $a$. But this
is a contradiction to the fact that $f$ has no isolated
singularities other than poles in $\mathbb{C}$.

 \vskip2mm
\noindent \textbf{Case II.}  $P$ is an infinite set.  Let $\infty$ be
 the limit of all poles $\{w_k \}$ of $f.$ Note that $F_0$ contains only finitely many
accumulation points, that is, poles of $B.$    There is an integer
$k_0$ such that $w_k\in \mathcal{N}_2$ for $n\geq k_0.$ Then there
is a sequence $\{w_k':k\geq k_0\}$ in $ \mathcal{N}_1$ such that
$$B(w_k')=B(w_k ) \quad \mathrm{and} \quad w_k'\to a.$$
 Since the  only  accumulation points of
$F_0$ are poles of $B$,  these points $w_k'$ are isolated singularities of
$f$. Noting (\ref{equal2}), we have that  $w_k'$ are  poles of
$f$ because $w_k$ are poles of $f$. But $\{w_k'\}$ tends to the
finite point $a$, and thus $a$ is not an isolated singularity of
$f$. This is a contradiction to $f\in
\mathfrak{M}(\overline{\mathbb{D}}).$

Therefore, in both cases we derive a contradiction to finish the
proof.
\end{proof}

Some comments are in order. Quine \cite{Qu} showed that    each
polynomial  has FSI-decomposible property. Precisely, he proved that
a nonconstant polynomial can always be written as $p(z^m)(m\geq 1)$
where $p$ is a polynomial of FSI property. For  decomposition of
rational functions, we call the reader's attention to   \cite{Ri1,Ri2}.
\vskip1mm
 The following result gives some equivalent
conditions for an entire-symbol  Toeplitz operator to be totally
Abelian, and it follows from Theorems \ref{FSI2} and \ref{rational2}. The reader can consult    related work in
 \cite{Ti}. 
\begin{prop}Suppose $\phi(z)=\sum_{k=0}^\infty c_kz^k $ is a nonconstant entire function. Then  the
following are equivalent:
\begin{itemize}
\item[(1)]  $ T_\phi $ is totally Abelian;
\item [(2)]  $n   (\phi )=  \min\, \{\mathrm{wind} (\phi, \phi(a)): a\in \mathbb{D},\phi(a)\not\in f(\mathbb{T}) \}=1.$
\item [(3)] there is a point $w$ on $\mathbb{T}$ such that
 $\phi(w)$ is not a point of self-intersection;
\item [(4)]  $\phi$ has only finitely many points of self-intersection  on $\mathbb{T}$;
\item [(5)] there is a point $w$ in $\mathbb{D}$ such that
$\phi-\phi(w)$ has exactly one zero  in $\mathbb{D}$, counting
multiplicity.
\item [(6)] there is a point $\lambda   \in \mathbb{C}$ such that $\phi-\lambda$ has exactly
one zero   in $\mathbb{D}$, counting multiplicity;
\item [(7)] $\gcd \{c_k: c_k\neq 0\}=1$.
\end{itemize}
\end{prop}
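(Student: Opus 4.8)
Since a nonconstant entire function $\phi$ has no poles, it belongs to $\mathfrak{M}(\overline{\mathbb{D}})$, so Theorems \ref{ratgroup2} and \ref{rational2} apply to it. The plan is to read off all the equivalences from the chain of equalities $n(\phi)=b(\phi)=o(\phi)=N(\phi)$ in Theorem \ref{ratgroup2}, together with the fact (discussion following Theorem \ref{Tm1}) that an analytic Toeplitz operator $T_\phi$ is totally Abelian if and only if $b(\phi)=1$. With this, (1) $\Leftrightarrow$ (2) is immediate, since (2) is precisely the assertion $n(\phi)=1$. For (1) $\Leftrightarrow$ (7), I would invoke the Baker--Deddens--Ullman computation recalled in the Introduction: for entire $\phi$, Corollary \ref{cor} gives $\phi(z)=\psi(z^{n})$ with $\psi$ entire and $n=b(\phi)=\gcd\{k:c_{k}\neq 0\}$; hence $b(\phi)=1$ is exactly condition (7).

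For the two boundary conditions (3) and (4): condition (3) asserts that $N(\phi-\phi(w),\mathbb{T})=1$ for some $w\in\mathbb{T}$, i.e. $N(\phi)=1$, so Theorem \ref{rational2} gives (1) $\Leftrightarrow$ (3) directly. For (4), assume first (1); then $b(\phi)=1$, and Corollary \ref{minimal} shows that $N(\phi-\phi(\xi),\mathbb{T})=1$ for all $\xi\in\mathbb{T}$ outside a countable set, so the set of self-intersection points of $\phi(\mathbb{T})$ is countable, and Lemma \ref{ncount} forces it to be finite, which is (4). Conversely, if $\phi(\mathbb{T})$ has only finitely many self-intersection points then, since $\phi(\mathbb{T})$ is infinite ($\phi$ being nonconstant), some value $\phi(w)$ with $w\in\mathbb{T}$ is not a self-intersection point, which is (3). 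Thus (1), (3), (4) are all equivalent.

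It remains to treat the interior-zero conditions (5) and (6), which I will insert into the cycle as (1) $\Rightarrow$ (5) $\Rightarrow$ (6) $\Rightarrow$ (1). The step (5) $\Rightarrow$ (6) is trivial, taking $\lambda=\phi(w)$. For (1) $\Rightarrow$ (5): from $n(\phi)=1$ choose $a\in\mathbb{D}$ with $\phi(a)\notin\phi(\mathbb{T})$ and $\mathrm{wind}\,(\phi,\phi(a))=1$; the Argument Principle gives $N(\phi-\phi(a),\mathbb{D})=\mathrm{wind}\,(\phi,\phi(a))=1$, so $w=a$ works. The only step that is not pure bookkeeping is (6) $\Rightarrow$ (1): using $\phi(z)=\psi(z^{n})$ from Corollary \ref{cor} with $n=b(\phi)$, if $\phi-\lambda$ vanishes at some $a\in\mathbb{D}$ then $\psi(a^{n})=\lambda$, and every $n$-th root $z$ of $a^{n}$ lies in $\mathbb{D}$ and satisfies $\phi(z)=\psi(z^{n})=\lambda$; counting multiplicities this produces at least $n$ zeros of $\phi-\lambda$ in $\mathbb{D}$, so $N(\phi-\lambda,\mathbb{D})\geq n=b(\phi)$. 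Hence (6) forces $b(\phi)=1$, i.e. (1), closing the cycle. I expect this last comparison to be the crux of the argument: conditions (5) and (6) are a priori weaker than (2) because the value $\lambda$ is not required to avoid $\phi(\mathbb{T})$, and it is the representation $\phi=\psi(z^{n})$ that converts an arbitrary interior fiber-count into the winding-number quantity $n(\phi)=b(\phi)$.
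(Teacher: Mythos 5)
Your proof is correct, and since the paper gives only a one-line citation (``it follows from Theorems \ref{FSI2} and \ref{rational2}'') rather than a written-out argument, you are supplying the bookkeeping the paper leaves implicit, along the intended route: read off (1) $\Leftrightarrow$ (2), (3), (4) from the chain $n(\phi)=b(\phi)=o(\phi)=N(\phi)$ and Theorem \ref{rational2}, use the Baker--Deddens--Ullman identity $b(\phi)=\gcd\{k:c_k\neq 0\}$ for (7), and insert (5), (6) via the Argument Principle and the representation $\phi=\psi(z^{n})$. One small point worth stating explicitly in the step (6) $\Rightarrow$ (1): if the chosen zero $a$ of $\phi-\lambda$ equals $0$, you do not get $n$ distinct roots of $z^{n}=a^{n}$, but $0$ is then a zero of $\psi(z^{n})-\lambda$ of multiplicity $mn\geq n$ (where $m\geq 1$ is the multiplicity of $\psi-\lambda$ at $0$), so the count $N(\phi-\lambda,\mathbb{D})\geq n$ still holds; your wording ``counting multiplicities this produces at least $n$ zeros'' covers this, but the dichotomy $a=0$ versus $a\neq 0$ deserves a sentence.
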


\section{Some  examples}
~~~~This section provides some examples. Some of them are examples
of totally Abelian Toeplitz operators, and others will show that the
MWN Property is quite restricted even for  functions of good  ``smooth" property on $\mathbb{T}$.
\vskip1mm
We begin with a rational function in
$\mathfrak{R}(\overline{\mathbb{D}})$.
\begin{exam}Let $Q$ be a  polynomial without zero on
 $\overline{\mathbb{D}}$ and of  prime degree $q$.
Let $P$ be a nonconstant polynomial  satisfying $$\deg  P< q.
 $$Suppose  that  $P$ has at least one zero  in $  \mathbb{D}$ and let
 $R= \frac{P}{Q}$.  We will show that $T_R$ is totally Abelian. For this,  assume $R$ has $k$ zeros in $\mathbb{D}$,
counting multiplicity. We have
 \begin{equation}\label{ee}
 1\leq k \leq \deg P <q.
 \end{equation}
If $T_R$ were not totally Abelian, then by Corollary \ref{48} there
would be a finite Blaschke product $B$
 and a Moebius map  $\widetilde{R} $ such that
$$R=\widetilde{R}\circ B ,$$  and 
  $\mathrm{order} \,B$=$q$. But by $R=\widetilde{R}\circ B ,$ we  have
 $k\geq \mathrm{order}\, B=q$, which is a contradiction to  (\ref{ee}).  Therefore $T_R$ is totally Abelian.\end{exam}

The following two examples arise from the Riemann-zeta function and
the Gamma function. It is shown that under a translation or a
dilation of the variable, the corresponding Toeplitz operators are
totally Abelian.

\begin{exam}\label{exam1} The Riemann-zeta function $\zeta(z)$ is defined as the
analytic continuation of the following:
$$z\mapsto \sum_{n=1}^\infty \frac{1}{n^z},\, Re z>1.$$
It is a meromorphic function in $\mathbb{C}$ and the only pole is
$z=1 $. Write  \linebreak $f(z)=  \zeta( \frac{z}{2}),$ and then
$f(z)\in \mathfrak{M}(\overline{\mathbb{D}})$. We claim that $T_f$
is totally Abelian.

 For this, by Theorem \ref{mero} it suffices to show that there is no meromorphic function $g$ on $\mathbb{C}$
such that $f(z)=g(z^k) $ for some integer $k\geq 2.$   Otherwise,
taking $\omega\neq 1$ and $\omega^k=1$, we have  $f(\omega z)=f(z)$.
This gives $\zeta(\frac{\omega z}{2} ) =\zeta(\frac{z}{2}) $, and
thus
 $$\zeta( \omega z) =\zeta( z).$$
 Then $\zeta$
 has at least  two poles $1$ and $\overline{\omega}$. This is a contradiction.
Therefore, $T_f$ is totally Abelian.
\end{exam}

\begin{exam}\label{exam2} The Gamma function $\Gamma(z)$ is a meromorphic
function with only poles at non-positive integers
$$0,-1,-2,\cdots.$$
Let $f(z)=\Gamma(z+2) $ and  $f(z)\in
\mathfrak{M}(\overline{\mathbb{D}})$. Then $T_f$ is totally Abelian.

Otherwise, by Theorem \ref{mero}, there is a  function $g\in
\mathfrak{M}(\overline{\mathbb{D}})$ such that $f(z)=g(z^k) $ for
some integer $k\geq 2.$          Let $\omega\neq 1$ and
$\omega^k=1$,  and we have \linebreak  $f(z)=f(\omega z )$; that is,
$$\Gamma(z+2)=\Gamma(\omega z+2).$$The poles of $\Gamma(z+2)$
$$-2, -3,  \cdots  $$
must be  the  poles of $\Gamma(\omega z+2) $
 $$ -2\overline{\omega}, -3 \overline{\omega} ,  \cdots  .$$
 This is impossible.  Hence $T_f$ is totally Abelian. \end{exam}

Before continuing, recall that a Jordan domain is the interior of a
Jordan curve. We need  Caratheodory's theorem, which can be found in
a standard textbook of complex analysis, see \cite{Ah} for
example.
\begin{lem}\label{54}[Caratheodory's theorem] Suppose that  $\Omega$ is a Jordan domain.
Then the inverse Riemann mapping function $f$  from $\mathbb{D}$
onto $\Omega$  extends   to a 1-to-1 continuous function $F$ from
$\overline{\mathbb{D}}$ onto $\overline{\Omega}$. Furthermore, the
function $F $ maps $\mathbb{T}$ 1-to-1 onto $\partial \Omega$.
\end{lem}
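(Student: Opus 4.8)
This is the classical Caratheodory extension theorem, so the plan is to reproduce its standard proof in two stages: first extend the Riemann map $f:\mathbb{D}\to\Omega$ to a continuous map $F$ on $\overline{\mathbb{D}}$, and then use that $\partial\Omega$ is a Jordan curve to force $F$ to be injective on $\mathbb{T}$.

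For the continuous extension I would run the length--area argument. Fix $\zeta_0\in\mathbb{T}$; for $0<r<\tfrac12$ let $C_r=\{z\in\mathbb{D}:|z-\zeta_0|=r\}$ be the corresponding crosscut of $\mathbb{D}$ and $\ell(r)$ the Euclidean length of the image arc $f(C_r)$. Cauchy--Schwarz gives $\ell(r)^2\le\pi r\int_{C_r}|f'|^2\,|dz|$, and integrating in $r$ yields $\int_0^{1/2}\ell(r)^2\,r^{-1}\,dr\le\pi\,\mathrm{Area}(\Omega)<\infty$; hence there is a sequence $r_n\downarrow0$ with $\ell(r_n)\to0$, so the arcs $f(C_{r_n})$ have diameters tending to $0$ and their two endpoints, which lie on $\mathbb{T}$, have $f$-images accumulating on $\partial\Omega$. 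Since a Jordan curve is locally connected, the sub-arc of $\partial\Omega$ between those two limit points, together with $f(C_{r_n})$, encloses a subregion of $\Omega$ whose diameter also tends to $0$; this pinches the cluster set of $f$ at $\zeta_0$ down to a single value, giving a continuous extension $F:\overline{\mathbb{D}}\to\mathbb{C}$. Because $\Omega=f(\mathbb{D})$ is dense in $\overline{\Omega}$ and $F(\overline{\mathbb{D}})$ is compact, $F(\overline{\mathbb{D}})=\overline{\Omega}$; and because $f$ is a proper map onto $\Omega$, no boundary value of $F$ can lie in $\Omega$, so $F(\mathbb{T})\subseteq\partial\Omega$ and in fact $F(\mathbb{T})=\partial\Omega$.

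For injectivity on $\mathbb{T}$, suppose $F(\zeta_1)=F(\zeta_2)=w$ with $\zeta_1\ne\zeta_2$. The open segment joining $\zeta_1$ and $\zeta_2$ lies in $\mathbb{D}$, and since $f$ is injective there its image $\gamma_0$, together with $w$, is a Jordan curve meeting $\partial\Omega$ only at $w$; moreover $\gamma_0\setminus\{w\}$ is a crosscut of $\Omega$ that splits $\Omega$ into the two components which are the $f$-images of the two halves of $\mathbb{D}$ cut off by the segment. An analysis via the Jordan curve theorem of this configuration, comparing the two complementary regions of $\gamma_0$ with those of the Jordan curve $\partial\Omega$ (which touch only at $w$), forces one of the two arcs $\sigma$ of $\mathbb{T}\setminus\{\zeta_1,\zeta_2\}$ to satisfy $F(\sigma)=\{w\}$, i.e. $F=f$ is constant on a whole arc of $\mathbb{T}$. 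That is impossible for a nonconstant holomorphic function on $\mathbb{D}$, by the reflection principle together with the boundary-uniqueness theorem for holomorphic functions. Hence $F|_{\mathbb{T}}$ is injective, and combining with the first stage, $F:\overline{\mathbb{D}}\to\overline{\Omega}$ and $F|_{\mathbb{T}}:\mathbb{T}\to\partial\Omega$ are continuous bijections between compact Hausdorff spaces, hence homeomorphisms, which is exactly the assertion.

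The genuinely substantive steps are the passage from the length--area bound to an actual limit in the first stage -- one must verify that the images of the crosscut endpoints converge to $\partial\Omega$ and then invoke local connectedness of the Jordan curve $\partial\Omega$ to trap the shrinking image region -- and the crosscut/Jordan-curve bookkeeping in the second stage; everything else is soft topology plus the standard boundary-uniqueness theorem. Since the result is entirely classical, in the write-up it suffices to refer to a standard reference such as \cite{Ah}.
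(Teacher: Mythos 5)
The paper offers no proof of this lemma; it is stated as a classical fact and the reader is referred to \cite{Ah}. Your sketch reproduces the standard classical argument, so there is nothing to compare against in the source -- but the sketch itself is sound and is exactly the textbook proof. Briefly: the length--area estimate $\ell(r)^2\le\pi r\int_{C_r}|f'|^2\,|dz|$ with $\int_0^{1/2}\ell(r)^2\,r^{-1}\,dr\le\pi\,\mathrm{Area}(\Omega)$ is correctly set up (the area is finite because $\Omega$ is a bounded Jordan domain), the extraction of $r_n\downarrow 0$ with $\ell(r_n)\to 0$ is correct, and the pinching step is the point where local connectedness of $\partial\Omega$ (automatic for a Jordan curve, being a continuous image of $\mathbb{T}$) is genuinely needed to conclude that the small crosscut image together with a small boundary sub-arc bounds a region of small diameter. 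The injectivity argument is likewise the standard one: if $F(\zeta_1)=F(\zeta_2)=w$ with $\zeta_1\neq\zeta_2$, the image of the chord is a crosscut of $\Omega$ whose closure meets $\partial\Omega$ only at $w$, and a Jordan-curve bookkeeping argument forces $F$ to be constant on one of the two arcs of $\mathbb{T}$, contradicting the boundary uniqueness theorem for nonconstant holomorphic functions. Since the paper's intent is merely to quote the theorem, citing \cite{Ah} (or Pommerenke's \emph{Boundary Behaviour of Conformal Maps}) is appropriate and your concluding remark to that effect is exactly right.
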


In what follows, we provide  some  examples   to show that in
general a function $f$ in the disk algebra $A(\mathbb{D})$ may not
satisfy MWN Property, even if $f$ has good smoothness on
$\mathbb{T}$.

\begin{exam} First, we present an easy example of a function in $A(\mathbb{D}) $ with good smoothness on $\mathbb{T}$,
but not satisfying  MWN Property.
 Put $$\Omega_0=\{z:0< |z|<1,   \,  0<  \arg z <\pi\},$$
and write $g(z)=z^{8},\,z\in \Omega_0$. Let $\phi_0$ be a conformal
map from the unit disk $\mathbb{D}$ onto $\Omega_0$. Precisely,
write $u(z)=\sqrt{i \frac{z+1}{z-1}}$ with $\sqrt{1}=1$ and
$$ \phi_0(z)=\frac{1-2u(z)}{1+2u(z)},\, z\in \mathbb{D}. $$
and put $\phi_1=g\circ\phi_0.$ Note that $\phi_1-\phi_1(0)$ has
finitely many zeros in $\mathbb{D}$ and is away from zero on
$\mathbb{T}$. One   can then show that the inner part of
$\phi_1-\phi_1(0)$ is a finite Blaschke product, and hence
 $\phi_1 \in \mathcal{CT}(\mathbb{D})$.
\vskip1mm
For smoothness  of $\phi_1$, by Lemma \ref{54} we have that
$\phi_1\in A(\mathbb{D})$. In addition, by \label{36}
using Schwarz Reflection Principle we see that  except for at most three   points on $ \mathbb{T}$, 
  $\phi_1$   extends analytically across $ \mathbb{T}$.
\vskip1mm
However, $\phi_1$ does not satisfy MWN Property.  In fact, for each
point $a\in \mathbb{D}$, $\phi_1-\phi_1(a)$ has at least $3$ zeros
in $\mathbb{D}$. Thus,
$$n( \phi_1 )\geq 3.$$ On the other hand, since
$N(\phi_1,\mathbb{T})=1$, $\phi_1$ can not
be written as a function of a finite Blaschke product of order
larger than $1$. Then by Theorem \ref{Tm1}
   $\{T_{\phi_1}\}'=\{T_z\}'$. That is,  $\,
  b(\phi_1)=1.$
But $$n( \phi_1 ) >1 ,$$ forcing $n( \phi_1 )\neq b(\phi_1).$
\end{exam}

 Inspired by this example,
 put $\Omega_1=\{z\in \mathbb{C} : |z|<1 , |z-1|<1\}$, and let $h:\mathbb{D}\to \Omega$ be a conformal map.
 Note that by Caretheodory's theorem, $h$ extends continuously to a bijective map from $\overline{\mathbb{D}}$ onto $\overline{\Omega}$.
  Furthermore, noting that $\Omega_1$ has two cusp points, one can show that except for two cusp points,
  $h$ can be analytically extended across $ \mathbb{T}$, as well as $h^9$. Also, $\{T_{h^9}\}'=\{T_z\}'$.

\vskip2mm
  The next example
     shows that
  Theorems  \ref{rational} and \ref{ratgroup} are
  restricted.
\begin{exam}     \label{exam3}
  By Schwarz-Christoffel formula, one can construct a conformal map $f$ form the
upper plane onto the rectangle   $\Omega$ with vertices \linebreak
 $\{-\frac{K}{2},\frac{K}{2},\frac{K}{2}+iK', -\frac{K}{2}+iK' \},$ where $K,K'>0.$
  Precisely, it is defined by
$$f(z)=C \int_0^z \frac{1}{\sqrt{( \lambda^2-1)( \lambda^2-t^2)}}d\lambda, \,z\in\Omega,$$
where $\sqrt{1}=1$, $C >0$ and $t$ is a parameter in $ (0,1)$
\cite[Section 2.5]{DT}. We can specialize    $K'=2k\pi$
 for some integer $k\geq 100.$

 Define $h(z)=\exp(z-\frac{K}{2}),\, z\in \mathbb{C}$ and let $g(z)$ be a conformal map from the unit disk onto the upper plane.
Write $$\phi(z)=h\circ f\circ g(z), \, z\in \mathbb{D}.$$ It is not
difficult to see that
$$n (\phi)\geq k,$$
 and for each $\xi\in  \mathbb{T} ,$
 $N(\phi-\phi(\xi), \mathbb{T})\geq 2.$ Moreover, we have $N(\phi)=2.$
\vskip1mm
Next we show  that $o(\phi)=1.$ For this, note that $f\circ g$ maps
the unit disk $\mathbb{D}$ conformally onto the   rectangle
$\Omega$, and  $f\circ g$ extends to a continuous bijection from
$\overline{\mathbb{D}}$ onto $\overline{\Omega}$, and $f\circ
g(\mathbb{T})=\partial \Omega$. Thus by definition of $o(\phi),$ the
assertion
 $o(\phi)=1 $ is equivalent to that    the only continuous map $\rho: \partial \Omega\to \partial \Omega$
satisfying $h( \rho) =h$ is the identity map. For this, let
$$h(\rho(z))=h(z), z\in \partial \Omega .$$
Then for each $z$ in $\partial \Omega$, $\rho(z)=z+2k(z) \pi i$ for
some integer $k(z).$ But $\rho$ is continuous, forcing $k(z)$ to be
a constant integer $k$. Hence $$\rho(z)=h(z)+2k \pi i, z\in
\partial \Omega. $$ Since $\rho(\partial \Omega)\subseteq
\partial \Omega$, we have $k=0$ and   $\rho$ is the identity
map, forcing \linebreak $o(\phi)=1.$

 Since $b(\phi)\,|\, o(\phi),$
 $b(\phi)=1.$
  By Theorem \ref{Tm1},     $T_{\phi }$ is totally Abelian.
 But for this function $\phi $ we have
  $$b(\phi) = o(\phi)<N(\phi)<n(\phi).$$
\end{exam}

  We conclude this section by showing that the function $\phi$ defined in Example 6.6
  has good smoothness property on $\mathbb{T}$.
   Rewrite $h(z)= v(z)^{2n}$ where $v(z)=\exp[\frac{1}{2k}(z-\frac{K}{2})].$ Note that
 $v\circ f\circ g$ defines a conformal map from $\mathbb{D}$ onto the domain
$$ \{z\in \mathbb{C}:\exp(\frac{-K}{2k})<|z|<1, \, \arg z \in (0,\pi)\},$$
whose boundary contains only four ``cusp points". By Lemma \ref{54}
we have $v\circ f\circ g\in A(\mathbb{D})$,
 and by Schwarz Reflection Principle  $v\circ f\circ g$ extends analytically across $\mathbb{T}$ except for these
 cusp points. The same is true for $\phi.$

\vskip2mm
 \noindent \textbf{Acknowledgement} This work is  partially supported by
 National Nature Science Foundation of China, 
       and by  Shanghai Center for Mathematical Sciences.

\vskip3mm

\noindent{Hui Dan, School of Mathematical Sciences, Fudan
University, Shanghai, 200433, China, E-mail: 15110180030@fudan.edu.cn

\noindent Kunyu Guo, School of Mathematical Sciences, Fudan University,
 Shanghai, 200433, China, E-mail: kyguo@fudan.edu.cn

\noindent Hansong Huang, Department of Mathematics, East China
University of
 Science and Technology, Shanghai, 200237, China, E-mail:
hshuang@ecust.edu.cn }

\end{document}